\theoremstyle{plain}
\newtheorem{thm}{Theorem}[section]
\newtheorem{cor}[thm]{Corollary}
\newtheorem{lem}[thm]{Lemma}
\newtheorem{prop}[thm]{Proposition}
\newtheorem{remark}[thm]{Remark}
\newtheorem{eg}[thm]{Example}
\theoremstyle{definition}
\newtheorem{defn}{Definition}[section]
\title{Ergodic properties of a parameterised family of symmetric golden maps: the matching phenomenon revisited}
\author{Karma Dajani}
\author{Slade Sanderson}
\address{Department of Mathematics, Utrecht University, P.O.~Box 80010, 3508TA Utrecht, The Netherlands}
\email{k.dajani1@uu.nl and s.b.sanderson@uu.nl}
\date{January 20, 2023}
\subjclass[2020]{37E05 (Primary) 28D05, 37A05 (Secondary)}
\keywords{invariant measure, ergodic theory, matching, interval map, number expansions, digit frequency}
\begin{document}
\maketitle
\tikzset{->-/.style={decoration={markings,mark=at position #1 with {\arrow{>}}},postaction={decorate}}}

\begin{abstract}
We study a one-parameter family of interval maps $\{T_\alpha\}_{\alpha\in[1,\beta]}$, with $\beta$ the golden mean, defined on $[-1,1]$ by $T_\alpha(x)=\beta^{1+|t|}x-t\beta\alpha$ where $t\in\{-1,0,1\}$.  For each $T_\alpha,\ \alpha>1$, we construct its unique, absolutely continuous invariant measure and show that on an open, dense subset of parameters $\alpha$, the corresponding density is a step function with finitely many jumps.  We give an explicit description of the maximal intervals of parameters on which the density has at most the same number of jumps.  A main tool in our analysis is the phenomenon of matching, where the orbits of the left and right limits of discontinuity points meet after a finite number of steps.  Each $T_\alpha$ generates signed expansions of numbers in base $1/\beta$; via Birkhoff's ergodic theorem, the invariant measures are used to determine the asymptotic relative frequencies of digits in generic $T_\alpha$-expansions.  In particular, the frequency of $0$ is shown to vary continuously as a function of $\alpha$ and to attain its maximum $3/4$ on the maximal interval $[1/2+1/\beta,1+1/\beta^2]$.
\end{abstract}

%%%%%%%%%%
%%%%%%%%%%
%%%%%%%%%%
\section{Introduction}\label{Introduction}
%%%%%%%%%%
%%%%%%%%%%
%%%%%%%%%%

Dynamical systems given by piecewise monotone maps $T:I\to I$ of an interval have a rich history: besides having applications in various fields---including population ecology (\cite{botella_oteo_ros_09}) and controlled switching circuits (\cite{banergee_karthik_yaun_yorke_00})---these systems are often used to produce expansions of numbers from the underlying interval $I$.  Examples include decimal, $n$-ary, continued fraction, (generalised) L\"uroth and $\beta$-expansions, though this list is far from exhaustive.  A common theme in the study of these expansions is the investigation of asymptotic relative frequencies of digits occurring in typical (i.e. Lebesgue--almost all) expansions.  To this end, the standard procedure is the construction of an ergodic, $T$-invariant measure $\mu$ equivalent to Lebesgue measure $\lambda$ and a calculation of the $\mu$-measure of the subinterval of $I$ corresponding to the digit(s) in question.  Birkhoff's ergodic theorem asserts that the measure of this subinterval equals the desired asymptotic frequency.

In \cite{dajani_kalle_20}, invariant measures and frequencies of digits are studied for a family of \textit{symmetric doubling maps} $\{D_\eta\}_{\eta\in [1,2]}$ defined on $[-1,1]$ by $D_{\eta}(x)=2x-d(x)\eta$ with $d(x)\in\{-1,0,1\}$.  These maps produce \textit{signed binary expansions} of numbers $x\in[-1,1]$ of the form $x=\eta\sum_{n\ge 1}d_n/2^n$ with each $d_n\in\{-1,0,1\}$.  It is shown that each $D_\eta$, $\eta>1$, admits an ergodic, invariant measure equivalent to Lebesgue measure.  The authors use a curious property called \textit{matching}---defined in the sequel---to prove that there is a countable collection of disjoint, open subintervals of $[1,2]$ whose union has full measure, and such that on each such subinterval, the densities of the corresponding invariant measures are step functions with at most the same, finite number of jumps.  These explicitly constructed measures are then used to study the asymptotic frequency of the digit $0$ in generic expansions.  This frequency is shown to be continuous as a function of $\eta$ and attains a maximal value of $2/3$ on the maximal interval $[6/5,3/2]$.  Moreover, the frequency function is either constant, strictly increasing or strictly decreasing on each of the aforementioned subintervals of $[1,2]$.

The present article continues these themes of inquiry with a parameterised family of \textit{skewed symmetric golden maps} $\{T_\alpha\}_{\alpha\in[1,\beta]}$, with $\beta=(\sqrt{5}+1)/2$ the golden mean, i.e. the positive real solution to $\beta^2=\beta+1$.  Each $T_\alpha:[-1,1]\to[-1,1]$ is defined by
\[T_\alpha(x):=\begin{cases}
\beta^2x+\beta\alpha, & x\in [-1,-1/\beta)\\
\beta x, & x\in[-1/\beta,1/\beta]\\
\beta^2x-\beta\alpha, & x\in(1/\beta,1]
\end{cases};\]
see Figure \ref{T_alpha_pic}.  Setting $J_{-1}:=[-1,-1/\beta),\ J_0:=[-1/\beta,1/\beta]$ and $J_1:=(1/\beta,1]$, the map $T_\alpha$ may be written more succinctly as 
\begin{equation}\label{T_alpha}
T_\alpha(x)=\beta^{1+|t(x)|}x-t(x)\beta\alpha,
\end{equation}
where $t(x)\in\{-1,0,1\}$ is the unique index for which $x\in J_{t(x)}$.  For $j\ge 1$, set $t_{\alpha,j}(x):=t(T_\alpha^{j-1}(x))$; the sequence of digits $(t_{\alpha,j}(x))_{j\ge 1}\in\{-1,0,1\}^\mathbb{N}$ records indices of the subsequent subintervals $J_{-1},\ J_0$ or $J_1$ entered by the forward orbit of $x$.  With this notation, equation (\ref{T_alpha}) gives for each $j\ge 1$
\[T_\alpha^j(x)=\beta^{1+|t_{\alpha,j}(x)|}T_\alpha^{j-1}(x)-t_{\alpha,j}(x)\beta\alpha.\]
Solving this for $T_\alpha^{j-1}(x)$, induction shows that for any $n\ge 1$,
\[x=\alpha\sum_{j=1}^n\frac{t_{\alpha,j}(x)}{\beta^{j-1+\sum_{k=1}^j |t_{\alpha,k}(x)|}}+\frac{T_\alpha^n(x)}{\beta^{n+\sum_{k=1}^n |t_{\alpha,k}(x)|}}.\]
Taking the limit $n\to\infty$ and recalling that $|T_\alpha^n(x)|\le 1$ gives
\[x=\alpha\sum_{j\ge1}\frac{t_{\alpha,j}(x)}{\beta^{j-1+\sum_{k=1}^j |t_{\alpha,k}(x)|}}.\]
Note that for fixed $\alpha$, this process determines a unique expansion for each $x\in[-1,1]$.  We refer to both this expansion and the corresponding sequence of digits $(t_{\alpha,j}(x))_{j\ge 1}$ as the \textit{$T_\alpha$-expansion} of $x$.  

Phenomena analogous to those observed in \cite{dajani_kalle_20} are found to occur for the skewed symmetric binary maps $T_\alpha$.  In particular, we prove: 
\begin{thm}\label{main_measure_thm}
For each $\alpha\in(1,\beta]$, the map $T_\alpha$ has a unique---hence ergodic---absolutely continuous invariant probability measure $\mu_\alpha$.  Moreover, $\mu_\alpha$ is equivalent to Lebesgue measure $\lambda$, and there is a countable collection $\{I_\mathbf d\}_{\mathbf d\in\mathcal{M}}$ of disjoint open subintervals of $[1,\beta]$ of full Lebesgue measure, such that for fixed $\mathbf d\in\mathcal{M}$ the density of each $\mu_\alpha$ with $\alpha\in I_\mathbf{d}$ is a step function with at most the same, finite number of jumps.  
\end{thm}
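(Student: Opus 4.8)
The plan is to separate the qualitative assertions (existence, uniqueness, ergodicity, equivalence with $\lambda$) from the quantitative one (the step-function structure). For the qualitative part I would first note that each $T_\alpha$ is piecewise linear with every slope equal to $\beta$ or $\beta^2$, hence uniformly expanding, so the Lasota--Yorke theorem produces an absolutely continuous invariant probability measure whose density has bounded variation. The decisive structural feature is that the middle branch $x\mapsto\beta x$ maps $J_0=[-1/\beta,1/\beta]$ onto all of $[-1,1]$; consequently the transfer operator applied to the constant function $\mathbf 1$ is bounded below by a positive constant, so every invariant density is bounded away from $0$. This simultaneously yields equivalence of $\mu_\alpha$ with $\lambda$ and, together with the resulting topological transitivity of $T_\alpha$, the uniqueness of the absolutely continuous invariant measure; uniqueness then forces ergodicity.

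For the step-function structure I would analyse the Perron--Frobenius operator $L_\alpha h(x)=\sum_{T_\alpha(y)=x}h(y)/|T_\alpha'(y)|$ and use the standard fact that, for a piecewise linear map, the jumps of an invariant density can occur only at points lying in the forward orbits of the discontinuity points. Here the discontinuities are $\pm 1/\beta$, and the oddness $T_\alpha(-x)=-T_\alpha(x)$ lets me track only the one-sided images of $1/\beta$, namely $T_\alpha(1/\beta^-)=1$ and $T_\alpha(1/\beta^+)=\beta(1-\alpha)$. Defining \emph{matching} at $\alpha$ to mean that the forward orbits of $1$ and of $\beta(1-\alpha)$ coincide after finitely many steps, I would assemble a candidate density as a finite linear combination of indicator functions of intervals whose endpoints are these (finitely many) orbit points, with weights the accumulated inverse slopes $\beta^{-(j-1)-\sum_{k\le j}|t_k|}$ read off from the $T_\alpha$-expansions. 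Checking $L_\alpha h=h$ then reduces to a telescoping cancellation of the contributions carried along the two orbits, and this cancellation is precisely what the matching relation supplies; the outcome is a step function whose number of jumps is controlled by the matching time.

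Next I would organise the matching parameters combinatorially. To a matching $\alpha$ I attach the datum $\mathbf d$ recording the digit strings $(t_{\alpha,j}(1))_j$ and $(t_{\alpha,j}(\beta(1-\alpha)))_j$ up to the first matching step. Since every iterate $T_\alpha^j(1)$ and $T_\alpha^j(\beta(1-\alpha))$ is an affine function of $\alpha$ with slope a power of $\beta$, the requirement that these points fall in prescribed partition elements $J_{-1},J_0,J_1$ and then meet defines an open interval $I_\mathbf d$; distinct data yield disjoint intervals, and the number of jumps is constant on $I_\mathbf d$, since the underlying combinatorics are. Letting $\mathcal M$ index the realised data, openness and density of $\bigcup_{\mathbf d\in\mathcal M}I_\mathbf d$ follow from stability of matching under small perturbations of $\alpha$ and from the density of parameters whose two orbits eventually coincide.

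The main obstacle---and the step I expect to be hardest---is showing that the non-matching set $\mathcal N=[1,\beta]\setminus\bigcup_{\mathbf d\in\mathcal M}I_\mathbf d$ has Lebesgue measure zero, which is exactly what promotes ``open and dense'' to ``full measure''. The natural strategy is to encode the parameter dependence $\alpha\mapsto T_\alpha^{\,n}(\beta(1-\alpha))$ as an auxiliary piecewise expanding system on parameter space and to show that ``entering a matching configuration'' is a positive-measure event recurring with bounded distortion; a Borel--Cantelli or self-similarity argument then forces almost every $\alpha$ to match eventually. Equivalently, $\mathcal N$ can be described as the set of parameters whose discontinuity orbit avoids a fixed open target set for all time, and a covering estimate for uniformly expanding parameter dynamics shows such avoidance sets are null.
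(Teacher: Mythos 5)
Your outline of the qualitative part (Lasota--Yorke existence, lower bound on the density via the full middle branch, uniqueness, and the Kopf-style principle that matching truncates the density to a finite step function) is broadly in the spirit of what is needed, although even there you lean on assertions that require work: the ``bounded below, hence unique and equivalent to $\lambda$'' step is not automatic for a three-branch map of minimal slope $\beta<2$ (an interval can straddle both discontinuities, so the naive growth-versus-splitting count fails), and the telescoping verification of $L_\alpha h=h$ is precisely the content of the cited results of Kopf rather than a one-line cancellation. The paper sidesteps these issues by routing everything through an auxiliary family $S_\alpha$ of constant slope $\beta$ for which $T_\alpha$ is the jump transformation on $J_0$: uniqueness and the explicit density are established for $S_\alpha$ first, and then transferred to $T_\alpha$ by the induced-measure identity $\mu_\alpha(A)=\nu_\alpha(S_\alpha^{-1}(A)\cap J_0)/\nu_\alpha(J_0)$.

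The genuine gap is the full-measure claim for $\bigcup_{\mathbf d\in\mathcal M}I_{\mathbf d}$, which you correctly identify as the crux but then only gesture at. A programme of ``parameter dynamics with bounded distortion plus Borel--Cantelli'' is not a proof: for the skewed map $T_\alpha$, whose branches have unequal slopes $\beta$ and $\beta^2$, the map $\alpha\mapsto T_\alpha^n(\beta(1-\alpha))$ does not satisfy any off-the-shelf expansion or distortion estimates, and setting up such a parameter-exclusion argument from scratch is a substantial piece of work that your proposal does not carry out. The paper's argument is entirely different and avoids parameter dynamics: because $S_\alpha$ has constant slope $\beta$, the orbit of the critical value before matching is literally $\alpha$ times the orbit of $1/\alpha$ under the \emph{fixed} golden $\beta$-transformation $B(x)=\beta x\ (\mathrm{mod}\ 1)$ (Lemma \ref{S_alpha_and_B}), so matching is equivalent to the $B$-orbit of $1/\alpha$ or $1-1/\alpha$ entering the window $(1/\beta\alpha,1/\beta]$ (Corollary \ref{B_matching_hole}); ergodicity of $B$ then gives matching for a.e.\ $\alpha$ in one stroke (Proposition \ref{matching_ae}), and Proposition \ref{T_matching_iff_S_matching} transfers this to $T_\alpha$. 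Without either this reduction or a worked-out parameter-exclusion argument, your proof establishes only that the matching set is open and dense, which is strictly weaker than the theorem's full-measure assertion. (A minor additional inaccuracy: the number of jumps need not be constant on $I_{\mathbf d}$ --- it drops at $\alpha=1/v(\mathbf d)$ --- but since the theorem only claims a uniform upper bound this does not affect the statement.)
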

Via Birkhoff's ergodic theorem, these measures are employed to show the following: 
\begin{thm}\label{main_freq_thm}
The asymptotic relative frequency of the digit $0$ in Lebesgue-a.e. $T_\alpha$-expansion depends continuously on $\alpha\in[1,\beta]$ and attains a maximum value of $3/4$ on the (maximal) interval $[1/2+1/\beta,1+1/\beta^2]$.  Furthermore, the frequency function is either constant, strictly increasing or strictly decreasing on each $I_\mathbf d$.
\end{thm}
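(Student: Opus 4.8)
The plan is to reduce the digit-frequency question to a computation of $\mu_\alpha(J_0)$ via Birkhoff's ergodic theorem, and then to exploit the explicit step-function densities furnished by Theorem~\ref{main_measure_thm} on each matching interval. Fix $\alpha\in(1,\beta]$. Since $t_{\alpha,j}(x)=t(T_\alpha^{j-1}(x))=0$ exactly when $T_\alpha^{j-1}(x)\in J_0$, the asymptotic frequency of the digit $0$ in the $T_\alpha$-expansion of $x$ is $\lim_{n\to\infty}\frac1n\sum_{j=1}^n\mathbf 1_{J_0}(T_\alpha^{j-1}(x))$. By Theorem~\ref{main_measure_thm} the measure $\mu_\alpha$ is ergodic and equivalent to $\lambda$, so Birkhoff's ergodic theorem identifies this limit, for Lebesgue-a.e.\ $x$, with
\[
f(\alpha):=\mu_\alpha(J_0)=\int_{-1/\beta}^{1/\beta}h_\alpha\,d\lambda,\qquad h_\alpha:=\frac{d\mu_\alpha}{d\lambda}.
\]
Because $T_\alpha$ is odd, the reflected measure is again an absolutely continuous invariant probability measure, so uniqueness forces $h_\alpha(-x)=h_\alpha(x)$; this symmetry halves the bookkeeping below. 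The endpoint $\alpha=1$ is not covered by Theorem~\ref{main_measure_thm} and will be treated as a one-sided limit once continuity is established.

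On a fixed matching interval $I_{\mathbf d}$, Theorem~\ref{main_measure_thm} gives that $h_\alpha$ is a step function whose finitely many jump locations are orbit points of the one-sided limits of $T_\alpha$ at $\pm 1/\beta$, and whose heights are governed by the fixed combinatorics of $\mathbf d$. Both the jump locations and the heights, as well as the normalising constant $\int_{-1}^{1}$ of the unnormalised density, are rational functions of $\alpha$ (and $\beta$) on $I_{\mathbf d}$, so $f(\alpha)=\int_{J_0}h_\alpha$ is an explicit rational function of $\alpha$ there. I would then differentiate this expression and show $f'$ does not change sign on $I_{\mathbf d}$: the only $\alpha$-dependence in $\int_{J_0}h_\alpha$ comes from the steps straddling $\pm 1/\beta$ together with the normalisation, and a short monotonicity estimate for these terms yields that $f$ is constant, strictly increasing, or strictly decreasing on $I_{\mathbf d}$, which is the asserted trichotomy.

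For the maximum I would single out the central interval $[1/2+1/\beta,\,1+1/\beta^2]$, whose endpoints are precisely the matching conditions at which the right-limit orbit point $\beta(1-\alpha)$ of $T_\alpha$ at $1/\beta$ reaches the discontinuity $-1/\beta$ (forcing $\alpha=1+1/\beta^2$) and at which the orbits of the two left-limits, $1$ at $1/\beta$ and $\beta(\alpha-1)$ at $-1/\beta$, first coincide (forcing $\alpha=1/2+1/\beta$). On this interval the matching word is the shortest possible, the density has its simplest form, and a direct evaluation of $\int_{J_0}h_\alpha$ gives the constant value $3/4$. Combining this with the trichotomy and with the equality of one-sided limits of $f$ at shared endpoints of adjacent matching intervals, I would show that $f$ increases up to $3/4$ on parameters below $1/2+1/\beta$ and decreases from $3/4$ above $1+1/\beta^2$, so that $3/4$ is the global maximum and $[1/2+1/\beta,\,1+1/\beta^2]$ is the maximal interval on which it is attained.

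The genuinely delicate point, and the step I expect to be the main obstacle, is continuity of $f$ on all of $[1,\beta]$. On the interior of each $I_{\mathbf d}$ continuity is immediate from the rational formula, and continuity at the common endpoint of two adjacent matching intervals follows by checking that the two explicit densities agree there. What remains is continuity at the non-matching parameters, a Lebesgue-null but a priori uncountable set covered by no $I_{\mathbf d}$, where the explicit description is unavailable. Here I would argue by stability of the acim: the maps $T_\alpha$ are piecewise linear with slopes bounded below by $\beta>1$ and boundedly many branches, so a Lasota--Yorke inequality furnishes a uniform bound on the variation of the densities $h_\alpha$ over any compact subinterval of parameters. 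Helly's theorem then yields $L^1$-precompactness of $\{h_\alpha\}$, and any $L^1$-limit along $\alpha_n\to\alpha_0$ is a $T_{\alpha_0}$-invariant absolutely continuous density, hence equals $h_{\alpha_0}$ by the uniqueness in Theorem~\ref{main_measure_thm}. Thus $h_\alpha\to h_{\alpha_0}$ in $L^1$ and $f(\alpha)\to f(\alpha_0)$, giving continuity at every parameter (and $\alpha=1$ as a one-sided limit). Establishing the uniform variation bound and the convergence of the transfer operators across the varying branch structure is the technical heart of this step.
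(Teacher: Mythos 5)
Your reduction via Birkhoff's theorem and the trichotomy on each $I_{\mathbf d}$ are sound in outline, and your proposed route to continuity (uniform Lasota--Yorke bounds, $L^1$-precompactness of the densities, and uniqueness of the acim) is a legitimate alternative to the paper's argument, which instead works with the constant-slope maps $S_\alpha$, derives the explicit formula $\mathfrak f_S(\alpha)=1-\frac{1}{\beta v(\Xi(\mathbf d))}\left(\frac{\mathfrak n(\mathbf d)-1}{\alpha}-K_{\mathbf d}\right)$ on each matching interval, and transfers everything to $T_\alpha$ through the jump-transformation identity $\mathfrak f_T=2-1/\mathfrak f_S$. That identity is worth adopting: it makes the trichotomy immediate (the sign of $\mathfrak n(\mathbf d)-1$ decides monotonicity) rather than resting on a "short monotonicity estimate" for the steps straddling $\pm 1/\beta$, which you have not actually carried out.

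The genuine gap is in your treatment of the maximum. First, $[1/2+1/\beta,\,1+1/\beta^2]$ is not a single matching interval with "the shortest possible matching word": it contains infinitely many distinct matching intervals (e.g.\ $I_{1001}$, $I_{1010}$, and all $I_{\mathbf d^n}$ with $\mathbf d^n=1(\mathbf w_2\mathbf w_0)^n001$ accumulating at $1/2+1/\beta$, which is itself a non-matching parameter), so "a direct evaluation of $\int_{J_0}h_\alpha$" on one interval does not establish the constant value $3/4$ across the whole plateau. Second, and more seriously, your plan to conclude that $3/4$ is the global maximum by showing $f$ "increases up to $3/4$ below $1/2+1/\beta$ and decreases above $1+1/\beta^2$" presupposes a global monotonicity outside the plateau that is neither proved nor true: the matching intervals on $[1,1/2+1/\beta)$ are densely interleaved, with $f$ constant on the cascade intervals ($\mathfrak n(\psi(\mathbf d))=1$), increasing where $\mathfrak n(\mathbf d)>1$ and decreasing where $\mathfrak n(\mathbf d)<1$, so no monotone picture is available. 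The actual content of the paper's proof is a uniform upper bound --- $\mathfrak f_S(\alpha)\le 4/5$ for every matching word with $\mathfrak n(\mathbf d)=1$, with equality precisely when $\mathbf d\prec 1(\mathbf w_2\mathbf w_0)^\infty$ --- established by a long combinatorial estimate on $v(\Xi(\mathbf d))$ and $K_{\mathbf d}$ (Lemma \ref{freq_func_bdd_lem}), combined with the cascade construction $\psi$ (Proposition \ref{cacscades_preserve_prop_M}, Lemma \ref{freq_func_on_cascades}) showing every matching-interval endpoint is a limit of cascade endpoints, plus density and continuity. Your proposal contains no substitute for this bound, which is the technical heart of the theorem.
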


As in \cite{dajani_kalle_20}, the main tool used to construct the $T_\alpha$-invariant measures is a property called matching.  An interval map $T:I\to I$ is said to have \textit{matching} if for each critical point $c\in I$, the orbits of the left and right limits $y_\pm:=\lim_{x\to c^\pm}T(x)$ agree after some finite number of steps.\footnote{Some authors require that the one-sided derivatives also agree at these times, in which case the map may be said to have \textit{strong matching} (\cite{dajani_kalle_maggioni_20}).  This extra condition is not needed for our purposes.}  That is, for each critical point $c\in I$ there are integers $M,N\ge 0$ for which $T^M(y_-)=T^N(y_+)$. 

Matching has gained considerable attention in recent years.  Intricacies of the metric entropy function of Nakada's $\alpha$-continued fraction maps have been studied using matching in \cite{nakada_natsui_08}, \cite{carminati_marmi_profeti_10}, \cite{carminati_tiozzo_12}, \cite{kraaikamp_schmidt_steiner_12}, \cite{bonanno_carminati_isola_tiozzo_13} and \cite{carminati_tiozzo_13}.  In particular, matching is used in \cite{kraaikamp_schmidt_steiner_12} to determine the natural extension for each $\alpha$-continued fraction transformation, and it is shown that the set of $\alpha\in [0,1]$ for which matching does not occur has zero Lebesgue measure.  The Lebesgue measure of this set of non-matching parameters---in addition to the fact that its Hausdorff dimension is $1$---is also shown in \cite{carminati_tiozzo_12}.  Matching is used in \cite{dajani_kraaikamp_steiner_09} to determine invariant measures for the related family of $\alpha$-Rosen continued fraction transformations.  A parameterised family of linear maps with one increasing and one decreasing branch are considered in \cite{botella_oteo_ros_glendinning_13}, and matching is used to show that in some parameter regions, the Lyapunov exponent and topological entropy are constant.  A geometric explanation of matching for a similar family of maps is given in \cite{cosper_misiurewicz_18}, and further implications of matching for these maps---including smoothness of entropy on an open dense subset of parameters---is considered in \cite{bruin_carminati_marmi_profeti_19}.  The notion of matching is extended to random dynamical systems in \cite{dajani_kalle_maggioni_20} and is used to study the asymptotic frequency of the digit $0$ in typical signed binary expansions arising from a family of random interval maps.  Matching has also been investigated for generalised $\beta$-transformations, a certain class of continued fraction expansions with finite digit sets, and Lorenz maps (see \cite{bruin_caminati_kalle_17}, \cite{chen_kraaikamp_22} and \cite{cholewa_oprocha_21}, respectively).  

The present paper exploits the phenomenon of matching in a fashion similar to that of \cite{dajani_kalle_20}.  There the authors use results of \cite{kopf_90}, which gives formulas for densities of the absolutely continuous invariant measures of piecewise linear expanding interval maps.  These densities are---in general---infinite sums of (finite) step functions which are determined by the orbits of the left and right limits at critical points of the underlying interval map.  However, when matching occurs the infinite sum becomes finite, and the density itself is a finite step function depending only on these orbits before matching.  In \cite{dajani_kalle_20}, it is shown that matching occurs for the symmetric doubling map $D_\eta$ on a set of parameters $\eta$ in $[1,2]$ of full Lebesgue measure.  For these matching parameters, the orbits of the left and right limits at the critical points before matching are studied in detail, and this information is used to provide an explicit formula for the density of the (unique) absolutely continuous invariant probability measure for each $D_\eta$ with matching.  The parameter space $[1,2]$ is divided into a countable union of (maximal) open intervals---called \textit{matching intervals}---where each $D_\eta$ has matching, and a Lebesgue-null set of non-matching parameters with Hausdorff dimension $1$.  On each matching interval, matching occurs after the same number of steps, and for each left/right limit at a critical point, the digits of the corresponding signed binary expansions agree before matching.  

While the results of the present paper imply that the same direct approach of understanding matching for the skewed symmetric golden maps $T_\alpha$ can be applied to construct the invariant measures asserted in Theorem \ref{main_measure_thm}, we find that the unequal slopes of the different branches present difficulties.  To circumvent these, we instead study matching for a family of \textit{symmetric golden maps} $\{S_\alpha\}_{\alpha\in[1,\beta]}$ of constant slope for which the skewed symmetric golden maps $\{T_\alpha\}_{\alpha\in[1,\beta]}$ are jump transformations, and it is subsequently shown that the parameters $\alpha$ for which the maps $T_\alpha$ and $S_\alpha$ have matching coincide (Proposition \ref{T_matching_iff_S_matching}).  Equipped with this result, one could then use the formulas from \cite{kopf_90} to determine invariant densities for the $T_\alpha$ with matching; however, we proceed in the simpler setting of the symmetric maps $S_\alpha$---determining invariant densities and the frequencies of digits for these---and finally use the fact that $T_\alpha$ is the jump transformation of $S_\alpha$ to determine invariant measures and frequencies of digits for the original skewed symmetric golden maps.  

The paper is organised as follows.  In \S\ref{Symmetric golden maps} the symmetric golden maps $\{S_\alpha\}_{\alpha\in[1,\beta]}$ are introduced.  These are shown in \S\ref{Matching almost everywhere} to have matching for Lebesgue--a.e. $\alpha\in[1,\beta]$, and we also prove here that the matching parameters of both families $\{S_\alpha\}_{\alpha\in[1,\beta]}$ and $\{T_\alpha\}_{\alpha\in[1,\beta]}$ coincide.  Subsections \ref{Matching words and intervals} and \ref{Cascades of matching intervals} are devoted to understanding the finer structure of the set of matching parameters.  The former provides a classification of all matching intervals and of the orbits of all left and right limits at critical points before matching occurs.  In the latter, it is shown that all (but two) of the matching intervals generate in a natural fashion a whole `cascade' of countably many matching intervals with adjacent endpoints.  In \S\ref{Invariant measures and frequencies of digits} we use the results of the preceding section to prove Theorems \ref{main_measure_thm} and \ref{main_freq_thm}.  In particular, explicit formulas for densities of the unique, absolutely continuous invariant measures of the symmetric golden maps $S_\alpha$ are provided in \S\ref{Invariant measures}, and the invariant measures of the skewed maps $T_\alpha$ are expressed in terms of these.  These measures are used in \S\ref{Frequencies of digits} to determine expressions for the asymptotic frequencies of the digit 0 in typical $S_\alpha$- and $T_\alpha$-expansions.  The maximal frequencies of the digit $0$ as functions of $\alpha$ are considered in \S\ref{Maximal frequency of zero}.. Proofs of some technical results are provided in an appendix (\S\ref{Appendix: proofs of technical lemmas}).

\bigskip

{\flushleft \textbf{Acknowledgments.}} This work is part of project number 613.009.135 of the research programme Mathematics Clusters which is financed by the Dutch Research Council (NWO).

\tikzmath{\B = (1+sqrt(5))/2; \alph0=1.4;} 
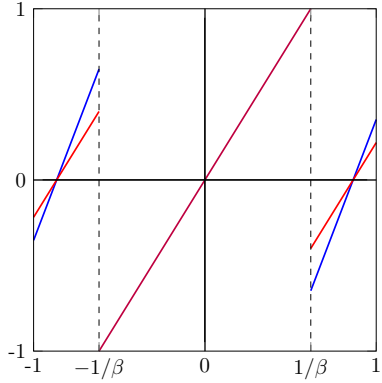
\begin{figure}[t]
\begin{tikzpicture}[scale=.8]
\begin{axis}[axis on top,
axis equal image,
grid style=dashed,
ymin=-1, ymax=1, xmin=-1, xmax=1, ytick={-1,0,1}, xtick={-1,-1/\B,0,1/\B,1}, yticklabels={-1,0,1}, xticklabels={-1,$-1/\beta$,0,$1/\beta$,1}]
% Piecewise linear functions
\addplot[domain=-1:-1/\B, thick, color=blue]{\B*\B*x+\B*\alph0};
\addplot[domain=-1/\B:1/\B, thick, color=purple]{\B *x};
\addplot[domain=1/\B:1, thick, color=blue]{\B*\B*x-\B*\alph0};
\addplot[domain=-1:-1/\B, thick, color=red]{\B *x+\alph0};
\addplot[domain=1/\B:1, thick, color=red]{\B *x-\alph0};
% Axes
\draw[color=black, thick] (axis cs: -1,0) -- (axis cs: 1,0);
\draw[color=black, thick] (axis cs: 0,-1) -- (axis cs: 0,1);
% Dashed vertical lines
\draw[color=black, dashed] (axis cs: -1/\B,-1) -- (axis cs: -1/\B,1);
\draw[color=black, dashed] (axis cs: 1/\B,-1) -- (axis cs: 1/\B,1);
\end{axis}
\end{tikzpicture}
\caption{The maps $T_\alpha$ (blue) and $S_\alpha$ (red) with $\alpha=\alph0$.  Note that $T_\alpha=S_\alpha$ on the middle interval $J_0=[-1/\beta,1/\beta]$.}
\label{T_alpha_pic}
\end{figure}

%%%%%%%%%%
%%%%%%%%%%
%%%%%%%%%%
\section{Symmetric golden maps $S_\alpha$}\label{Symmetric golden maps}
%%%%%%%%%%
%%%%%%%%%%
%%%%%%%%%%

As mentioned in \S\ref{Introduction}, we determine invariant measures and the frequencies of digits for a family of \textit{symmetric golden maps} $\{S_\alpha\}_{\alpha\in[1,\beta]}$ for which the $\{T_\alpha\}_{\alpha\in[1,\beta]}$ are jump transformations.  These invariant measures and frequencies are then used to determine the invariant measures and frequencies of digits for the original $T_\alpha$.  The maps $S_\alpha$ are defined as follows: for $\alpha\in[1,\beta]$, let $S_\alpha:[-1,1]\to[-1,1]$ be given by 
\[
S_\alpha(x):=\beta x-t(x)\alpha,
\]
with $t(x)\in\{-1,0,1\}$ as in $\S\ref{Introduction}$; see Figure \ref{T_alpha_pic}.  Note that $S_\alpha(x)\in J_0$ for each $x\in J_{-1}\cup J_1$.  Using this, one readily verifies that
\begin{equation}\label{T_in_terms_of_S}
T_\alpha(x)=\begin{cases}
S_\alpha(x), & x\in J_0\\
S_\alpha^2(x), & x\in J_{-1}\cup J_1
\end{cases},
\end{equation}
i.e. $T_\alpha$ is the jump transformation for $S_\alpha$ with respect to the sweep-out set $J_0=[-1/\beta,1/\beta]$ (see, e.g. \S11.4 of \cite{dajani_kalle_21}).  For each $j\ge 1$, let $s_{\alpha,j}(x):=t(S_\alpha^{j-1}(x)).$  With induction one finds that for each $k\ge 0$,
\begin{equation}\label{S_alpha^k_eqn}
S_\alpha^k(x)=\beta^k\left(x-\alpha\sum_{j=1}^{k}s_{\alpha,j}(x)/\beta^j\right)
\end{equation}
(with the summation for $k=0$ understood to be $0$).  
Since $|S_\alpha^k|\le 1$, dividing both sides by $\beta^k$ and taking the limit as $k$ approaches infinity gives
\begin{equation}\label{S_alpha_expansion}
x=\alpha\sum_{j\ge 1}s_{\alpha,j}(x)/\beta^j.
\end{equation}
Following our convention from \S\ref{Introduction}, we refer to both the right-hand side of Equation (\ref{S_alpha_expansion}) and the corresponding sequence $(s_{\alpha,j}(x))_{j\ge 1}$ of digits in $\{0,\pm 1\}^\mathbb{N}$ as the \textit{$S_\alpha$-expansion} of $x$.  Again this process determines---for fixed $\alpha$---a unique expansion for each $x\in[-1,1]$; moreover, if $x,y\in [-1,1]$ have the same $S_\alpha$-expansion, then Equation (\ref{S_alpha^k_eqn}) can be used to show that $x=y$.  Also note that not every sequence in $\{0,\pm 1\}^\mathbb{N}$ is an $S_\alpha$-expansion; in particular, a $1$ or $-1$ is necessarily followed by a $0$.

As the orbits of $1$ and $1-\alpha$ will be studied in detail below, we fix special notation for their $S_\alpha$-expansions: let $d_{\alpha,j}:=s_{\alpha,j}(1)$ and $e_{\alpha,j}:=s_{\alpha,j}(1-\alpha)$ for each $\alpha\in[1,\beta]$ and $j\ge 1$.  When $\alpha$ is understood, it is suppressed from the notation, and we simply write $d_j:=d_{\alpha,j}$ and $e_j:=e_{\alpha,j}$.

%%%%%%%%%%
\subsection{Matching almost everywhere}\label{Matching almost everywhere}
%%%%%%%%%%

In this section, we show that the maps $S_\alpha$ (and $T_\alpha$) have matching on a set of full Lebesgue measure.\footnote{The general approach to proving this result largely follows that of \S2.2 of \cite{dajani_kalle_20}; however, we shall see that the dynamics of the symmetric golden maps $S_\alpha$ are---in a sense---more delicate than those of the previously studied symmetric binary maps (compare, e.g. Proposition \ref{difference_of_orbits} below with Proposition 2.1 of \cite{dajani_kalle_20}).} The map $S_\alpha$ has two critical points $\pm1/\beta$.  Due to symmetry, it suffices to consider the matching criteria only for the positive critical point $1/\beta$.  Note that $\lim_{x\to 1/\beta^-}S_\alpha(x)=1$ and $\lim_{x\to 1/\beta^+}S_\alpha(x)=1-\alpha$.  Hence $S_\alpha$ has matching if and only if there are integers $M,N\ge 1$ for which $S_\alpha^M(1)=S_\alpha^N(1-\alpha)$.

We begin by investigating matching in a number of specific cases.  First, note that $1\in J_1$ and $1-\alpha\in J_0$ for all $\alpha\in[1,\beta].$

\begin{enumerate}
\item[(i)]  If $\alpha\in(1+1/\beta^2,\beta]$, then
\begin{alignat*}{2}
S_\alpha(1)&=\beta-\alpha\in[0,1/\beta^3) \subset J_0,\ & S_\alpha(1-\alpha)&=\beta-\beta\alpha\in[-1,-1/\beta) \subset J_{-1},\\
S_\alpha^2(1)&=\beta^2-\beta\alpha\in J_0\ \ \ \ \ \ \ \text{and}\ \ \ \ \ \ \ & S_\alpha^2(1-\alpha)&=\beta^2-\beta^2\alpha+\alpha=\beta^2-\beta\alpha\in J_0
\end{alignat*}
shows that $S_\alpha$ has matching with $M=N=2$.  

\item[(ii)] If $\alpha=1+1/\beta^2$, then
\begin{alignat*}{2}
S_\alpha(1)&=\beta-\alpha=1/\beta^3 \in J_0, & S_\alpha(1-\alpha)&=\beta-\beta\alpha=-1/\beta\in J_0,\\
S_\alpha^2(1)&=1/\beta^2\in J_0, & S_\alpha^2(1-\alpha)&=-1\in J_{-1},\\
S_\alpha^3(1)&=1/\beta\in J_0,\ \ \ \ & S_\alpha^3(1-\alpha)&=-1/\beta^3\in J_0,\\
S_\alpha^4(1)&=1\in J_1\ \ \ \ \ \ \ \ \ \ \text{and}\ \ \ \ \ \ \ \ & S_\alpha^4(1-\alpha)&=-1/\beta^2=1-\alpha\in J_0,
\end{alignat*}
so $S_\alpha$ has a Markov partition, namely 
\[\left\{[-1/\beta^3/1/\beta^3],\ \pm(1/\beta^3,1/\beta^2],\ \pm(1/\beta^2,1/\beta],\ \pm(1/\beta,1]\right\},\]
and no matching.  

\item[(iii)]  If $\alpha\in(1+1/\beta^3,1+1/\beta^2)$,
\begin{alignat*}{2}
S_\alpha(1)&=\beta-\alpha\in(1/\beta^3,1/\beta^2)\subset J_0, & S_\alpha(1-\alpha)&=\beta-\beta\alpha\in(-1/\beta,-1/\beta^2)\subset J_0, \\
S_\alpha^2(1)&=\beta^2-\beta\alpha\in(1/\beta^2,1/\beta)\subset J_0, & S_\alpha^2(1-\alpha)&=\beta^2-\beta^2\alpha\in(-1,-1/\beta)\subset J_{-1},\\
S_\alpha^3(1)&=\beta^3-\beta^2\alpha\in(1/\beta,1)\subset J_1, & S_\alpha^3(1-\alpha)&=\beta^3-(\beta^3-1)\alpha\in(-1/\beta^3,1/\beta^3)\subset J_0,\\
S_\alpha^4(1)&=\beta^4-(\beta^3+1)\alpha\in J_0\ \ \ \ \ \ \ \text{and}\ \ \ \ \ \ \ & S_\alpha^4(1-\alpha)&=\beta^4-(\beta^4-\beta)\alpha\in J_0.
\end{alignat*}
Since $\beta^4-\beta^3=\beta^2=\beta+1$, we find that $S^4(1)=S^4(1-\alpha)$, so $S_\alpha$ has matching with $M=N=4$. 

\item[(iv)] If $\alpha=1+1/\beta^3$, 
\begin{alignat*}{2}
S_\alpha(1)&=\beta-\alpha=1/\beta^2\in J_0, \ \ \ \ & S_\alpha(1-\alpha)&=\beta-\beta\alpha=-1/\beta^2\in J_0,\\
S_\alpha^2(1)&=1/\beta\in J_0, & S_\alpha^2(1-\alpha)&=-1/\beta\in J_0,\\
S_\alpha^3(1)&=1\in J_1,\ \ \ \ & S_\alpha^3(1-\alpha)&=-1\in J_{-1}\ \ \ \text{and}\\
& & S_\alpha^4(1-\alpha)&=-1/\beta^2\in J_0,
\end{alignat*}
so $S_\alpha$ has a Markov partition and no matching. 

\item[(v)]  If $\alpha\in(1,1+1/\beta^3)$, then
\begin{alignat*}{2}
S_\alpha(1)&=\beta-\alpha\in(1/\beta^2,1/\beta)\subset J_0, & S_\alpha(1-\alpha)&=\beta-\beta\alpha\in(-1/\beta^2,0)\subset J_0,\\
S_\alpha^2(1)&=\beta^2-\beta\alpha\in(1/\beta,1)\subset J_1, & S_\alpha^2(1-\alpha)&=\beta^2-\beta^2\alpha\in(-1/\beta,0)\subset J_0,\\
S_\alpha^3(1)&=\beta^3-(\beta^2+1)\alpha\in(-1/\beta^3,1/\beta)\subset J_0 & \ \ \ \ \ \text{and}\ \ \ \ \ S_\alpha^3(1-\alpha)&=\beta^3-\beta^3\alpha\in(-1,0)\subset J_{-1}\cup J_0.
\end{alignat*}
This case will be considered more closely in what follows.

\item[(vi)]  If $\alpha=1$, then $S_\alpha(1)=1/\beta\in J_0,\ S_\alpha^2(1)=1\in J_1$ and $S_\alpha(1-\alpha)=0=1-\alpha\in J_0$.  Thus there is a Markov partition and no matching. 
\end{enumerate}

Note that in the cases above in which there is matching---namely (i) and (iii)---we have $M=N$ (a property called \textit{neutral matching} in \cite{bruin_carminati_marmi_profeti_19}).  We shall see below that this is always the case, i.e. $S_\alpha$ has matching if and only if there is some $m\ge 1$ for which $S_\alpha^m(1)=S_\alpha^m(1-\alpha)$.  For this we need the following proposition---key to a number of arguments throughout---which states that the difference between subsequent points in the orbits of $1$ and $1-\alpha$ can take on at most four values.  Recall that $(d_j)_{j\ge 1}$ and $(e_j)_{j\ge 1}$ denote the $S_\alpha$-expansions of $1$ and $1-\alpha$, respectively.  

\begin{prop}\label{difference_of_orbits}
For every $\alpha\in[1,\beta]$ and $j\ge 0$,
\[S_\alpha^j(1)-S_\alpha^j(1-\alpha)\in\{0,\alpha/\beta,\alpha,\beta\alpha\}.\]
\end{prop}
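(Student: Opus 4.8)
The plan is to prove the statement by induction on $j$, following the evolution of the difference
\[
\delta_j := S_\alpha^j(1) - S_\alpha^j(1-\alpha).
\]
Writing $a_j := S_\alpha^j(1)$ and $b_j := S_\alpha^j(1-\alpha)$, the defining relation $S_\alpha(x)=\beta x - t(x)\alpha$ yields the recursion
\[
\delta_{j+1} = \beta\,\delta_j - k_j\,\alpha, \qquad k_j := t(a_j) - t(b_j) \in \{-2,-1,0,1,2\},
\]
so the whole argument is controlled by the integers $k_j$. The base case is immediate: $\delta_0 = 1-(1-\alpha)=\alpha$. For the inductive step I assume $\delta_j \in \alpha\cdot\{0,1/\beta,1,\beta\}$; since every element of this set is nonnegative we have $a_j \ge b_j$, and as $t$ is nondecreasing this already forces $k_j \in \{0,1,2\}$.

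The core of the proof is to show that $k_j$ is in fact constrained so that $\delta_{j+1}$ returns to $\alpha\cdot\{0,1/\beta,1,\beta\}$. I would use two geometric facts. First, $k_j=2$ forces $t(a_j)=1$ and $t(b_j)=-1$, i.e.\ $a_j\in J_1$ and $b_j\in J_{-1}$, whence $\delta_j = a_j-b_j > 2/\beta$. Second, $k_j=0$ forces $a_j,b_j$ to lie in a common interval $J_i$, whence $0\le\delta_j\le 2/\beta$, the diameter of the largest interval $J_0$. Running through the values of $\delta_j$ and simplifying with $\beta^2=\beta+1$ (so $\beta-1=1/\beta$, $\beta^2-1=\beta$ and $\beta^2-2=1/\beta$) then gives: for $\delta_j=0$ only $k_j=0$ is possible and $\delta_{j+1}=0$; for $\delta_j=\alpha/\beta\le 1<2/\beta$ the value $k_j=2$ is excluded, leaving $\delta_{j+1}\in\{\alpha,0\}$; and for $\delta_j=\beta\alpha\ge\beta>2/\beta$ the value $k_j=0$ is excluded, leaving $\delta_{j+1}\in\{\beta\alpha,\alpha/\beta\}$. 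In each of these three cases $\delta_{j+1}$ again lies in $\alpha\cdot\{0,1/\beta,1,\beta\}$.

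The one remaining case, $\delta_j=\alpha$, is where the genuine difficulty lies. Here $k_j=0$ gives $\delta_{j+1}=\beta\alpha$ and $k_j=1$ gives $\delta_{j+1}=(\beta-1)\alpha=\alpha/\beta$, both admissible, whereas $k_j=2$ would give $\delta_{j+1}=(\beta-2)\alpha<0$, which is \emph{not} in the set. Thus the entire proposition reduces to excluding the configuration $\delta_j=\alpha$ with $a_j\in J_1$ and $b_j\in J_{-1}$, and this is the main obstacle, since an induction on $\delta_j$ alone does not see the positions of $a_j,b_j$. The decisive point is that $k_j=2$ requires $\delta_j=\alpha>2/\beta=1+1/\beta^3$, so the bad configuration is vacuously impossible as soon as $\alpha\le 1+1/\beta^3$, and the induction closes for all such $\alpha$.

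For the complementary range $\alpha\in(1+1/\beta^3,\beta]$ I would instead invoke the explicit orbit computations of cases (i)--(iii) preceding the proposition: matching occurs at step $2$ in case (i) and at step $4$ in case (iii), while case (ii) is periodic of period $4$, so in each instance the full sequence $(\delta_j)_{j\ge 0}$ is completely determined. One reads off directly that it takes only the values $\alpha$, $\alpha/\beta$ and $0$, with $k_j=1$ at every step where $\delta_j=\alpha$; hence the forbidden configuration never occurs for these $\alpha$ either, completing the induction in all cases. I expect the transition bookkeeping to be routine; the only real subtlety is the exclusion of $k_j=2$ at $\delta_j=\alpha$, which is exactly where the delicacy noted in the footnote—absent in the symmetric binary setting—enters, and which is resolved by combining the width bound $2/\beta=1+1/\beta^3$ with the explicit short orbits in the large-parameter regime.
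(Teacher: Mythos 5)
Your proposal is correct and follows essentially the same route as the paper: induction on $j$ over the four possible values of the difference, with the decisive step being that the transition $\delta_j=\alpha$ with $t(a_j)-t(b_j)=2$ is impossible for $\alpha\le 1+1/\beta^3=2/\beta$ (since that configuration forces $\delta_j>2/\beta$), while the regime $\alpha\in(1+1/\beta^3,\beta]$ is dispatched by the explicit orbit computations preceding the proposition. The only (harmless) deviation is at $\delta_j=\beta\alpha$, where the paper pins down $(d_k,e_k)=(1,-1)$ exactly whereas you allow $k_j\in\{1,2\}$; both possibilities land in the admissible set, so your weaker conclusion still closes the induction.
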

\begin{proof}
For $\alpha\notin (1,1+1/\beta^3)$, the statement is verified with the cases above, so assume $\alpha\in (1,1+1/\beta^3)$.  We use induction on $j$.  The result clearly holds for $j=0$; assume for some $j=k-1\ge 0$ that
\[S_\alpha^{k-1}(1)-S_\alpha^{k-1}(1-\alpha)=y\]
for some $y\in\{0,\alpha/\beta,\alpha,\beta\alpha\}$.  If $y=0,$ then also $S_\alpha^j(1)-S_\alpha^j(1-\alpha)=0$ for all $j\ge k-1$.  Suppose $y\neq 0$, and note that
\[S_\alpha^{k}(1)-S_\alpha^{k}(1-\alpha)=(\beta S_\alpha^{k-1}(1)-d_{k}\alpha)-(
\beta S_\alpha^{k-1}(1-\alpha)-e_{k}\alpha)=\beta y-(d_{k}-e_{k})\alpha.\]
We determine the difference above for each $y\in\{\alpha/\beta,\alpha,\beta\alpha\}$: 
\begin{enumerate}
\item[(i)] $y=\alpha/\beta$:  Since $1/\beta<y<2/\beta$, we have $(d_{k},e_{k})=(1,0), (0,-1)$ or $(0,0)$.  In the first two cases
\[S_\alpha^{k}(1)-S_\alpha^{k}(1-\alpha)=0,\]
and in the third 
\[S_\alpha^{k}(1)-S_\alpha^{k}(1-\alpha)=\alpha.\]

\item[(ii)] $y=\alpha$:  Since $1/\beta<y<1+1/\beta^3=2/\beta$, we again have $(d_{k},e_{k})=(1,0), (0,-1)$ or $(0,0)$.  In the first two cases
\[S_\alpha^{k}(1)-S_\alpha^{k}(1-\alpha)=\beta\alpha-\alpha=\alpha/\beta,\]
and in the third
\[S_\alpha^{k}(1)-S_\alpha^{k}(1-\alpha)=\beta\alpha.\]

\item[(iii)] $y=\beta\alpha$:  Since $y>2/\beta$, we must have $(d_{k},e_{k})=(1,-1)$, and hence
\[S_\alpha^{k}(1)-S_\alpha^{k}(1-\alpha)=\beta^2\alpha-2\alpha=\alpha/\beta.\]
\end{enumerate}
\end{proof}

The previous proposition can be used to give an equivalent definition of matching:
\begin{prop}
The map $S_\alpha$ has matching if and only if there is some $m\ge 1$ for which $S_\alpha^m(1)=S_\alpha^m(1-\alpha)$.
\end{prop}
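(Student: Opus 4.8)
The plan is to treat the two implications separately, as one of them is immediate. If there is some $m\ge 1$ with $S_\alpha^m(1)=S_\alpha^m(1-\alpha)$, then taking $M=N=m$ in the definition of matching shows at once that $S_\alpha$ has matching. So the content is in the converse, which I would attack by upgrading an arbitrary (``diagonal'') coincidence of the two orbits into a synchronous (``vertical'') one. Suppose then that $S_\alpha^M(1)=S_\alpha^N(1-\alpha)=:p$ for some $M,N\ge 1$. If $M=N$ there is nothing to prove; and since the argument is symmetric under interchanging the roles of the orbits of $1$ and $1-\alpha$, I may assume $M>N$ and set $\ell:=M-N\ge 1$.

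The crucial observation is that once both orbits pass through the common point $p$, they are governed by the single orbit $c_i:=S_\alpha^i(p)$. Indeed, for every $i\ge 0$ one has $S_\alpha^{M+i}(1)=S_\alpha^i(p)=c_i$ and $S_\alpha^{M+i}(1-\alpha)=S_\alpha^{(M-N)+i}(p)=c_{i+\ell}$, so that, writing $\delta_j:=S_\alpha^{j}(1)-S_\alpha^{j}(1-\alpha)$,
\[\delta_{M+i}=c_i-c_{i+\ell}\qquad(i\ge 0).\]
By Proposition \ref{difference_of_orbits} each $\delta_{M+i}$ lies in the finite set $\{0,\alpha/\beta,\alpha,\beta\alpha\}$, whose elements are all nonnegative and whose \emph{nonzero} elements are bounded below by $\alpha/\beta>0$ (recall $\alpha\ge 1$). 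This sign-definiteness together with the uniform positive lower bound is exactly the leverage I would exploit.

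The final step is a boundedness squeeze obtained by sampling the common orbit along the arithmetic progression of step $\ell$. Put $q_k:=c_{k\ell}=S_\alpha^{k\ell}(p)$ for $k\ge 0$; then $q_k\in[-1,1]$ is bounded, while its consecutive gaps are
\[q_k-q_{k+1}=c_{k\ell}-c_{(k+1)\ell}=\delta_{M+k\ell}\in\{0,\alpha/\beta,\alpha,\beta\alpha\}.\]
If none of these gaps vanished, each would be at least $\alpha/\beta$, forcing $(q_k)$ to decrease without bound and contradicting $q_k\ge -1$. Hence $\delta_{M+k\ell}=0$ for some $k\ge 0$, i.e.\ $S_\alpha^m(1)=S_\alpha^m(1-\alpha)$ with $m=M+k\ell\ge 1$, as required; the case $M<N$ is identical after interchanging the two orbits, the only change being that $(q_k)$ is then nondecreasing and bounded above by $1$. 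I expect the main obstacle to be precisely this passage from a diagonal to a vertical coincidence: a priori matching only asserts agreement of the two orbits at possibly different times, and the device that resolves it is restricting to the times $M+k\ell$, along which Proposition \ref{difference_of_orbits} turns the differences into monotone, uniformly separated gaps of a bounded sequence.
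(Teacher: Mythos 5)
Your proposal is correct, and the converse direction is argued by a genuinely different route than the paper's. The paper assumes for contradiction that $S_\alpha^j(1)\neq S_\alpha^j(1-\alpha)$ for \emph{all} $j$, uses Proposition \ref{difference_of_orbits} to get the uniform gap $S_\alpha^j(1)-S_\alpha^j(1-\alpha)\ge 1/\beta$, deduces $S_\alpha^j(1-\alpha)\le 1/\beta^2$ for all $j$, and then invokes a dynamical fact about the middle branch (a point in $(0,1/\beta^2]$ is expanded by $\beta$ until it exceeds $1/\beta^2$) to force $S_\alpha^j(1-\alpha)\le 0$ and, symmetrically, $S_\alpha^j(1)\ge 0$ for all $j$; the diagonal coincidence $S_\alpha^M(1)=S_\alpha^N(1-\alpha)$ then pins the common value at $0$, which is fixed, yielding a synchronous coincidence after all. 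You instead work directly on the common orbit $c_i=S_\alpha^i(p)$ after the diagonal coincidence, sample it along the arithmetic progression of step $\ell=|M-N|$, and telescope: the gaps $q_k-q_{k+1}=\delta_{M+k\ell}$ are nonnegative and, if never zero, uniformly at least $\alpha/\beta$, contradicting boundedness of the orbit in $[-1,1]$. Both arguments hinge on the same key input (the sign-definiteness and uniform separation of the differences from Proposition \ref{difference_of_orbits}), but yours is more self-contained, needing only that and the boundedness of the interval, whereas the paper's needs the extra observation about expansion on $J_0$; the paper's version, on the other hand, yields the stronger structural conclusion that under the contradiction hypothesis both orbits are sign-confined, which is in the spirit of the orbit analysis it carries out elsewhere.
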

\begin{proof}
One direction is immediate; for the other, suppose there are distinct $M,N\ge 1$ for which $S_\alpha^M(1)=S_\alpha^N(1-\alpha)$.  Assume for the sake of contradiction that $S_\alpha^j(1)\neq S_\alpha^j(1-\alpha)$ for all $j\ge 1$.  By Proposition \ref{difference_of_orbits},
\[S_\alpha^j(1)-S_\alpha^j(1-\alpha)\ge \alpha/\beta\ge1/\beta,\]
and hence 
\[S_\alpha^j(1-\alpha)\le S^j(1)-1/\beta\le 1-1/\beta=1/\beta^2\]
for each $j$.  If $S_\alpha^j(1-\alpha)\in(0,1/\beta^2]$, then there is some $k\ge 0$ for which $S_\alpha^{j+k}(1-\alpha)=\beta^kS_\alpha^j(1-\alpha)>1/\beta^2$, contradicting the above, and thus $S_\alpha^j(1-\alpha)\le 0$ for each $j$.  A similar argument implies $S_\alpha^j(1)\ge 0$ for each $j$.  But $S_\alpha^M(1)=S_\alpha^N(1-\alpha)$, so this common value must be $0$.  Since $0$ is fixed by $S_\alpha$, we have the contradiction that $S_\alpha^m(1)=0=S_\alpha^m(1-\alpha)$ with $m=\text{max}\{M,N\}$.
\end{proof}

We can now define a canonical index to describe when matching occurs:
\begin{defn}
The \textit{matching index} of $S_\alpha$ is
\[m(\alpha):=\inf\{m\ge 1\ |\ S_\alpha^m(1)=S_\alpha^m(1-\alpha)\}\in\mathbb{N}\cup\{\infty\}.\]
\end{defn}

\begin{figure}
\begin{tikzpicture}
\begin{scope}[every node/.style={circle,thick,draw}]
	\node (0) at (4,0) {$0$};
	\node (a/b) at (0,0) {$\alpha/\beta$};
	\node (a) at (-5,-0) {$\alpha$};
	\node (ba) at (-2.5,3) {$\beta\alpha$};
\end{scope}
\begin{scope}[every edge/.style={draw,thick},
		      every node/.style={fill=white}]
	\path [->] (a/b) edge[bend left=20,color=cyan] node[scale=.8, pos=.2] {\tiny $\left(\begin{smallmatrix}0\\ \overline{1}\end{smallmatrix}\right)$} node[scale=.8, pos=.8] {\tiny $\left(\begin{smallmatrix}0\\ \overline{0}\end{smallmatrix}\right)$} (0);
	\path [->] (a/b) edge[bend right=20,color=cyan] node[scale=.8, pos=.2] {\tiny $\left(\begin{smallmatrix}1\\ \overline{0}\end{smallmatrix}\right)$} node[scale=.8, pos=.8] {\tiny $\left(\begin{smallmatrix}0\\ \overline{0}\end{smallmatrix}\right)$} (0);
	\path [->] (a/b) edge[bend left=100] node[scale=.8, pos=.2] {\tiny $\left(\begin{smallmatrix}0\\ \overline{0}\end{smallmatrix}\right)$} node[scale=.8, pos=.8] {\tiny $\left(\begin{smallmatrix}1\\ \overline{0}\end{smallmatrix}\right)$} (a);
	\path [->] (a/b) edge[bend left=50] node[scale=.8, pos=.2] {\tiny $\left(\begin{smallmatrix}0\\ \overline{0}\end{smallmatrix}\right)$} node[scale=.8, pos=.8] {\tiny $\left(\begin{smallmatrix}0\\ \overline{1}\end{smallmatrix}\right)$} (a);
	\path [->] (a/b) edge[bend left=25] node[scale=.8, pos=.2] {\tiny $\left(\begin{smallmatrix}0\\ \overline{0}\end{smallmatrix}\right)$} node[scale=.8, pos=.8] {\tiny $\left(\begin{smallmatrix}0\\ \overline{0}\end{smallmatrix}\right)$} (a);	
	\path [->] (a) edge[bend left=0] node[scale=.8, pos=.2] {\tiny $\left(\begin{smallmatrix}1\\ \overline{0}\end{smallmatrix}\right)$} node[scale=.8, pos=.8] {\tiny $\left(\begin{smallmatrix}0\\ \overline{0}\end{smallmatrix}\right)$}(a/b);		
	\path [->] (a) edge[bend left=100,color=cyan] node[scale=.8, pos=.2] {\tiny $\left(\begin{smallmatrix}1\\ \overline{0}\end{smallmatrix}\right)$} node[scale=.8, pos=.8] {\tiny $\left(\begin{smallmatrix}0\\ \overline{1}\end{smallmatrix}\right)$} (a/b);		
	\path [->] (a) edge[bend left=50,color=cyan] node[scale=.8, pos=.2] {\tiny $\left(\begin{smallmatrix}0\\ \overline{1}\end{smallmatrix}\right)$} node[scale=.8, pos=.8] {\tiny $\left(\begin{smallmatrix}1\\ \overline{0}\end{smallmatrix}\right)$} (a/b);		
	\path [->] (a) edge[bend left=25] node[scale=.8, pos=.2] {\tiny $\left(\begin{smallmatrix}0\\ \overline{1}\end{smallmatrix}\right)$} node[scale=.8, pos=.8] {\tiny $\left(\begin{smallmatrix}0\\ \overline{0}\end{smallmatrix}\right)$} (a/b);		
	\path [->] (a) edge[bend left=60] node[left=-.2, scale=.8, pos=.2] {\tiny $\left(\begin{smallmatrix}0\\ \overline{0}\end{smallmatrix}\right)$} node[scale=.8, pos=.8] {\tiny $\left(\begin{smallmatrix}1\\ \overline{1}\end{smallmatrix}\right)$} (ba);	
	\path [->] (ba) edge[bend left=60] node[scale=.8, pos=.2] {\tiny $\left(\begin{smallmatrix}1\\ \overline{1}\end{smallmatrix}\right)$} node[right=-.2, scale=.8, pos=.8] {\tiny $\left(\begin{smallmatrix}0\\ \overline{0}\end{smallmatrix}\right)$} (a/b);
	\path [->] (0) edge[loop right,color=cyan] node[above right=.02,scale=.8, pos=.2] {\tiny $\left(\begin{smallmatrix}d_{j}\\ d_{j}\end{smallmatrix}\right)$} node[below right=.03, scale=.8, pos=.8] {\tiny $\left(\begin{smallmatrix}d_{j+1}\\ d_{j+1}\end{smallmatrix}\right)$} ();
\end{scope}
\end{tikzpicture}
\caption[]{A graphical representation of the interdependence of the orbits of $1$ and $1-
\alpha$ for $\alpha\in[1,\beta]$.  Vertices represent the differences $S_\alpha^{j-1}(1)-S_\alpha^{j-1}(1-\alpha)$ for $j\ge 1$, and the beginnings and ends of edges are marked $\left(\begin{smallmatrix}d_{j}\\ e_{j}\end{smallmatrix}\right)$ and $\left(\begin{smallmatrix}d_{j+1}\\ e_{j+1}\end{smallmatrix}\right)$, respectively, where $\overline{w}:=-w$ for $w\in\{0,\pm 1\}$.  Cyan edges are taken if and only if $S_\alpha$ has matching.} 
\label{differences_graph}
\end{figure}

The cases above together with the proof of Proposition \ref{difference_of_orbits} reveal a strong interdependence between the orbits of $1$ and $1-\alpha$, which is summarised in the graph of Figure \ref{differences_graph}.  In particular, note that if matching occurs with matching index $m:=m(\alpha)$, then $S_{\alpha}^{m-1}(1)-S_\alpha^{m-1}(1-\alpha)=\alpha/\beta$ and $(d_m,e_m)\in\{(1,0),(0,-1)\}$.  Since $S_\alpha$-expansions cannot contain consecutive non-zero digits, this implies $S_{\alpha}^{m-2}(1)-S_\alpha^{m-2}(1-\alpha)=\alpha$ and $(d_{m-1},e_{m-1})\in\{(1,0),(0,-1)\}$.  For $m>2$, this further implies $S_{\alpha}^{m-3}(1)-S_\alpha^{m-3}(1-\alpha)=\alpha/\beta$ and $(d_{m-2},e_{m-2})=(0,0)$.  Thus if $S_\alpha$ has matching with index $m>2$, then the final three digits of the $S_\alpha$-expansions of $1$ and $1-\alpha$ before matching are given by
\begin{equation}\label{matching_final_digits}
\begin{pmatrix}d_{m-2}d_{m-1}d_m\\ e_{m-2}e_{m-1}e_m\end{pmatrix}\in\left\{\begin{pmatrix}010\\\overline{001}\end{pmatrix},\begin{pmatrix}001\\\overline{010}\end{pmatrix}\right\},
\end{equation}
where $\overline{w}:=-w$ for $w\in\{0,\pm 1\}$.  Conversely, if for some $m>2$, three consecutive digits of the $S_\alpha$-expansions of $1$ and $1-\alpha$ are given by (\ref{matching_final_digits}), then the proof implies that $S_\alpha$ has matching with index $m$.

A number of characterisations of matching for $S_\alpha$ can be derived from Proposition \ref{difference_of_orbits} and Figure \ref{differences_graph}.  For these we fix some notation: for each $x\in[-1,1]$ and $\alpha\neq 1$, let
\[\ell_\alpha(x):=\inf_{j\ge 0}\{S_\alpha^{j}(|x|)\le 0\}-1,\]
and set
\[\ell_\alpha:=\min\{\ell_\alpha(1),\ell_\alpha(1-\alpha)\}.\]

\begin{lem}\label{ell(alpha)}
For $\alpha\neq 1$, $S_\alpha$ has matching if and only if $\ell_\alpha<\infty$.  Moreover, if $\ell_\alpha<\infty$, then $m(\alpha)\in\{\ell_\alpha+1,\ell_\alpha+2\}$.  
\end{lem}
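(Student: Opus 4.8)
The plan is to reduce both assertions to a sharp analysis of the \emph{first crossing time}
\[J^*:=\inf\{j\ge 1: S_\alpha^j(1)\le 0\ \text{or}\ S_\alpha^j(1-\alpha)\ge 0\}.\]
Since $S_\alpha$ is odd we have $S_\alpha^j(|1-\alpha|)=S_\alpha^j(\alpha-1)=-S_\alpha^j(1-\alpha)$, so $S_\alpha^j(|1-\alpha|)\le 0$ exactly when $S_\alpha^j(1-\alpha)\ge 0$; using also $S_\alpha^0(1)=1>0$ and $S_\alpha^0(\alpha-1)>0$, this gives $\ell_\alpha=\min\{\ell_\alpha(1),\ell_\alpha(1-\alpha)\}=J^*-1$. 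By Proposition \ref{difference_of_orbits} the difference $\delta_j:=S_\alpha^j(1)-S_\alpha^j(1-\alpha)$ is always nonnegative, so for every $j<J^*$ one has $S_\alpha^j(1)>0>S_\alpha^j(1-\alpha)$: the two orbits sit strictly on opposite sides of $0$, and matching is precisely the event $\delta_m=0$. Thus the lemma amounts to showing that $J^*<\infty$ is equivalent to matching, and that matching, when it occurs, happens at step $J^*$ or $J^*+1$. As in Proposition \ref{difference_of_orbits}, the cases $\alpha\notin(1,1+1/\beta^3)$ can be read off directly from the orbit tables in the explicit computations preceding the lemma, so the substance lies in the regime $\alpha\in(1,1+1/\beta^3)$, where $1<\alpha<2/\beta$.

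The easy inclusion comes first. If $S_\alpha$ has matching with index $m=m(\alpha)$, write $S_\alpha^m(1)=S_\alpha^m(1-\alpha)=:v$. If $v\le 0$ the orbit of $1$ has reached $(-\infty,0]$ by step $m$, whereas if $v>0$ the orbit of $1-\alpha$ has reached $[0,\infty)$ by step $m$. Either way $J^*\le m<\infty$, so $\ell_\alpha=J^*-1<\infty$ and $m(\alpha)\ge \ell_\alpha+1$.

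For the reverse direction I would assume $J^*<\infty$ and inspect the step $J^*-1\to J^*$, during which exactly one orbit crosses $0$. Consider the case $S_\alpha^{J^*}(1)\le 0$ (the case $S_\alpha^{J^*}(1-\alpha)\ge 0$ is entirely analogous, with the roles of $J_1$ and $J_{-1}$, and of the digit pairs $(1,0)$ and $(0,-1)$, interchanged). A positive iterate in $J_0$ is merely multiplied by $\beta$ and stays positive, so $S_\alpha^{J^*-1}(1)$ must lie in $J_1$ with $d_{J^*}=1$, and $S_\alpha^{J^*}(1)=\beta S_\alpha^{J^*-1}(1)-\alpha\le 0$ forces $S_\alpha^{J^*-1}(1)\in(1/\beta,\alpha/\beta]$. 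Feeding this, together with $S_\alpha^{J^*-1}(1-\alpha)<0$, into the value $\delta_{J^*-1}\in\{\alpha/\beta,\alpha,\beta\alpha\}$, the bound $\alpha\le\beta$ rules out $\delta_{J^*-1}=\beta\alpha$, as this would force $S_\alpha^{J^*-1}(1-\alpha)\le-\alpha<-1$. In the two remaining cases I would apply the transition rules from the proof of Proposition \ref{difference_of_orbits} (equivalently, the edges of Figure \ref{differences_graph}): if $\delta_{J^*-1}=\alpha/\beta$, then $d_{J^*}=1$ forces $(d_{J^*},e_{J^*})=(1,0)$ and hence $\delta_{J^*}=0$, giving $m=J^*$; if $\delta_{J^*-1}=\alpha$, then $(d_{J^*},e_{J^*})=(1,0)$ and $\delta_{J^*}=\alpha/\beta$, after which the crossing constraints push $S_\alpha^{J^*}(1-\alpha)$ below $-1/\beta$ (this is exactly where $1<\alpha<2/\beta=1+1/\beta^3$ is used), forcing $e_{J^*+1}=-1$ and thus $(d_{J^*+1},e_{J^*+1})=(0,-1)$ with $\delta_{J^*+1}=0$, giving $m=J^*+1$.

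Combining the two directions yields $m(\alpha)\in\{J^*,J^*+1\}=\{\ell_\alpha+1,\ell_\alpha+2\}$ and, in particular, matching if and only if $\ell_\alpha<\infty$. I expect the \textbf{main obstacle} to be the bookkeeping at the crossing step: one must pin down, from the sign of the crossing and the value of $\delta_{J^*-1}$, exactly which of $J_{-1},J_0,J_1$ contains each of $S_\alpha^{J^*-1}(1)$, $S_\alpha^{J^*-1}(1-\alpha)$ and their images, and then verify that the admissible digit pairs always drive $\delta$ to $0$ within one further step rather than stalling on the matching-free cycle $\alpha/\beta\to\alpha\to\beta\alpha\to\alpha/\beta$ of Figure \ref{differences_graph}. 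This is precisely where the inequalities $1<\alpha\le\beta$ and $\alpha<2/\beta$ are invoked repeatedly, and where the degenerate boundary situations (iterates landing exactly on $\pm 1/\beta$ or on $0$) need separate, routine attention.
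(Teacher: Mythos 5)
Your proof is correct and takes essentially the same route as the paper's: both arguments hinge on Proposition \ref{difference_of_orbits} together with the transition rules of Figure \ref{differences_graph} applied at the first crossing time $\ell_\alpha+1$. The paper inspects the difference just \emph{after} the crossing (where nonnegativity of one orbit plus the bound $1<\alpha$ immediately forces it into $\{0,\alpha/\beta\}$), which trims your case analysis at step $J^*-1$ down to one case, but the substance is identical.
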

\begin{proof}
Let $\ell:=\ell_\alpha$.  That matching implies $\ell<\infty$ is immediate.  Now suppose $\ell<\infty$, and assume without loss of generality that $\ell=\ell_\alpha(1-\alpha)$ and thus $S_\alpha^{\ell+1}(1-\alpha)\ge 0$ (the other case is similar).  The definitions of $\ell$ and $m(\alpha)$ give $\ell+1\le m(\alpha)$.  By Proposition \ref{difference_of_orbits}, $S_\alpha^{\ell+1}(1-\alpha)\ge 0$ and $\alpha>1$ imply
\[S_\alpha^{\ell+1}(1)-S_\alpha^{\ell+1}(1-\alpha)\in\{0,\alpha/\beta\}.\]
The result holds if the difference is $0$.  If the difference is $\alpha/\beta$, we must have $(d_{\ell+2},e_{\ell+2})=(1,0)$.  From Figure \ref{differences_graph}, this implies 
\[S_\alpha^{\ell+2}(1)-S_\alpha^{\ell+2}(1-\alpha)=0.\]
\end{proof}

\begin{cor}\label{S_alpha_matching_hole}
For $\alpha\neq 1$, $S_\alpha$ has matching if and only if there exists some $j\ge 1$ such that 
\[S_\alpha^j(1)\in (1/\beta,\alpha/\beta]\ \ \ \text{or}\ \ \ S_{\alpha}^j(1-\alpha)\in[-\alpha/\beta,-1/\beta).\]
Moreover, $\ell_\alpha(1)$ and $\ell_\alpha(1-\alpha)$, respectively, are the infimums over all $j$ for which the above inclusions hold.
\end{cor}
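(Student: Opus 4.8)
The plan is to reduce the corollary to Lemma \ref{ell(alpha)}, which (for $\alpha\neq 1$) already identifies matching with the finiteness of $\ell_\alpha=\min\{\ell_\alpha(1),\ell_\alpha(1-\alpha)\}$. It then suffices to reinterpret each of $\ell_\alpha(1)$ and $\ell_\alpha(1-\alpha)$ as a first-passage time into an explicit ``hole''. The one computation I would isolate is the following elementary fact about a single forward step: for any $y\in(0,1]$ one has $S_\alpha(y)\le 0$ if and only if $y\in H:=(1/\beta,\alpha/\beta]$. Indeed, if $y\in(0,1/\beta]$ then $S_\alpha(y)=\beta y>0$, while if $y\in J_1=(1/\beta,1]$ then $S_\alpha(y)=\beta y-\alpha\le 0$ exactly when $y\le\alpha/\beta$; here $1/\beta<\alpha/\beta\le 1$ since $1<\alpha\le\beta$, so $H$ is a genuine subinterval of $J_1$. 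In words, a positive orbit can drop to $\le 0$ only one step after a visit to $H$, and conversely each visit to $H$ is immediately followed by a value $\le 0$.

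From this I would extract the exact first-passage statement. Fix a starting point $y>0$ and set $k:=\inf\{j\ge 0:S_\alpha^j(y)\le 0\}$ and $h:=\inf\{j\ge 0:S_\alpha^j(y)\in H\}$, allowing the value $\infty$. If $k<\infty$ then $k\ge 1$, the point $S_\alpha^{k-1}(y)$ is positive and maps to a value $\le 0$, so the fact above forces $S_\alpha^{k-1}(y)\in H$ and hence $h\le k-1$; on the other hand any visit to $H$ at a step $i<k-1$ would give $S_\alpha^{i+1}(y)\le 0$ with $i+1<k$, contradicting minimality of $k$, so $h=k-1$. If $k=\infty$ then $H$ is never visited (a visit would create a later value $\le 0$), so $h=\infty$. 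In all cases $\ell_\alpha(y)=k-1=h$, i.e.\ $\ell_\alpha(y)$ equals the first time the forward orbit of $y$ enters $H$.

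Applying this with $y=1$ gives $\ell_\alpha(1)=\inf\{j:S_\alpha^j(1)\in(1/\beta,\alpha/\beta]\}$ directly. For the second orbit I would invoke the oddness of $S_\alpha$: since $t(-x)=-t(x)$ one has $S_\alpha(-x)=-S_\alpha(x)$, hence $S_\alpha^j(-x)=-S_\alpha^j(x)$ for all $j$. Because $|1-\alpha|=\alpha-1\in(0,1/\beta]$, applying the first-passage statement to $y=\alpha-1$ and reflecting through the origin yields $\ell_\alpha(1-\alpha)=\inf\{j:S_\alpha^j(\alpha-1)\in H\}=\inf\{j:S_\alpha^j(1-\alpha)\in[-\alpha/\beta,-1/\beta)\}$. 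Combining the two identities with Lemma \ref{ell(alpha)} then gives the whole statement at once: $S_\alpha$ has matching iff $\ell_\alpha<\infty$ iff at least one of the two orbits ever enters its hole, and the displayed infima compute $\ell_\alpha(1)$ and $\ell_\alpha(1-\alpha)$, respectively.

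The step I expect to require the most care is the two-sided bookkeeping that pins $h$ down to exactly $k-1$ (not merely $h\le k-1$), since this is what upgrades the bare equivalence into the sharp ``infimum'' claim. A secondary point to check is the index range: for $\alpha<\beta$ neither $1$ nor $\alpha-1$ lies in $H$, so the first hole-entry occurs at some $j\ge 1$ and the restriction ``$j\ge 1$'' in the statement loses nothing; the single boundary value $\alpha=\beta$, where $S_\alpha^0(1)=1\in H$, is already handled by the explicit computation of case (i) and can be noted separately.
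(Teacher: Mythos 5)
Your proof is correct and takes essentially the same route as the paper's: the paper's one-line argument cites Lemma \ref{ell(alpha)} together with the preimage identities $S_\alpha^{-1}([-1,0])\cap(0,1]=(1/\beta,\alpha/\beta]$ and its mirror image, which is precisely your single-step fact. You simply spell out the first-passage bookkeeping ($h=k-1$), the symmetry reduction $S_\alpha^j(1-\alpha)=-S_\alpha^j(\alpha-1)$, and the $\alpha=\beta$ edge case that the paper leaves implicit.
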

\begin{proof}
This follows from Lemma \ref{ell(alpha)} and the facts that 
\[S_\alpha^{-1}([-1,0])\cap(0,1]=(1/\beta,\alpha/\beta]\]
and
\[S_\alpha^{-1}([0,1])\cap[-1,0)=[-\alpha/\beta,1/\beta).\]
\end{proof}

Due to symmetry, the above corollary states that $S_\alpha$ has matching if and only if the orbit of either $1$ or of $\alpha-1$ enters the region $(1/\beta,\alpha/\beta]$.  We shall see that this occurs for Lebesgue--a.e. $\alpha$ by relating the beginnings of these orbits to the beginnings of certain orbits of the (ergodic) \textit{$\beta$-transformation} $B:[0,1]\to[0,1]$ defined by $B(x)=\beta x\ (\text{mod}\ 1)$.  Set
\[
b(x):=\begin{cases}
0, & x<1/\beta\\
1, & x\ge 1/\beta
\end{cases},
\]
and for each $j\ge 1$, let
\[b_j(x):=b(B^{j-1}(x)).\]
We call the sequence $(b_j(x))_{j\ge 1}$ the \textit{$\beta$-expansion} (also referred to as the \textit{greedy-expansion}) of $x$.  Via induction, one finds that for each $k\ge 0$,
\begin{equation}\label{B^k_eqn}
B_\alpha^k(x)=\beta^k\left(x-\sum_{j=1}^{k}b_j(x)/\beta^j\right).
\end{equation}

\begin{lem}\label{S_alpha_and_B}
Let $x\in\{1,\alpha-1\},\ \alpha\neq 1$.  Then
\begin{enumerate}
\item[(i)] $S_\alpha^j(x)=\alpha B^j(x/\alpha)$ for each $0\le j\le\ell_\alpha(x)$,

\item[(ii)] $s_{\alpha,j}(x)=b_{j}(x/\alpha)$ for each $1\le j\le\ell_\alpha(x)$ and

\item[(iii)] $\ell_\alpha(x)$ is the infimum over all $j$ for which $B^j(x/\alpha)\in (1/\beta\alpha,1/\beta]$.
\end{enumerate}
\end{lem}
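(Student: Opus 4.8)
Since $x\in\{1,\alpha-1\}$ with $\alpha>1$, we have $x>0$, so $|x|=x$, $S_\alpha^j(|x|)=S_\alpha^j(x)$, and $x/\alpha\in(0,1)$ (indeed $1/\alpha<1$ and $(\alpha-1)/\alpha=1-1/\alpha<1$). The three assertions are really facets of a single statement: the $S_\alpha$-orbit of $x$ mirrors the $B$-orbit of $x/\alpha$ under the rescaling $y\mapsto y/\alpha$, \emph{for exactly as long as the orbit stays positive}. I would prove (i) and (ii) together by induction on $j$ and read off (iii) from the same computation. The conceptual heart is a comparison of the two maps' branch-switching thresholds: under the substitution $y=\alpha z$, the break between the $t=0$ branch $S_\alpha(y)=\beta y$ and the $t=1$ branch $S_\alpha(y)=\beta y-\alpha$ occurs at $y=1/\beta$, i.e. at $z=1/(\beta\alpha)$, whereas $B$ switches digits at $z=1/\beta$. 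The mismatch can only occur in the gap $(1/\beta\alpha,1/\beta]$, and this is precisely the region in which the next iterate becomes nonpositive.

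\textbf{The induction (parts (i) and (ii)).} The base case $j=0$ is immediate from $S_\alpha^0(x)=x=\alpha\,(x/\alpha)=\alpha B^0(x/\alpha)$, together with $x\in(0,1]$. For the inductive step, suppose for some $0\le j<\ell_\alpha(x)$ that $S_\alpha^j(x)=\alpha B^j(x/\alpha)=:\alpha z_j$ and $S_\alpha^j(x)\in(0,1]$; I would show $t(S_\alpha^j(x))=b(z_j)$ by examining $z_j$. If $z_j\le 1/(\beta\alpha)$ then $\alpha z_j\in(0,1/\beta]$ gives $t=0$, and $z_j<1/\beta$ gives $b=0$. If $z_j>1/\beta$ then $\alpha z_j\in(\alpha/\beta,1]\subset J_1$ gives $t=1$, and $z_j\ge 1/\beta$ gives $b=1$. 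The remaining possibility $z_j\in(1/\beta\alpha,1/\beta]$ is excluded for $j<\ell_\alpha(x)$, since there $t=1$ and $S_\alpha^{j+1}(x)=\beta S_\alpha^j(x)-\alpha=\alpha(\beta z_j-1)\le 0$, contradicting $S_\alpha^{j+1}(x)>0$ (which holds because $\inf\{i:S_\alpha^i(x)\le 0\}=\ell_\alpha(x)+1>j+1$). With the digits equal, $S_\alpha^{j+1}(x)=\beta S_\alpha^j(x)-t\alpha=\alpha(\beta z_j-b)=\alpha B(z_j)=\alpha B^{j+1}(x/\alpha)$, advancing (i), while $S_\alpha^{j+1}(x)\in(0,1]$ again by definition of $\ell_\alpha(x)$. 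The digit equality $s_{\alpha,j+1}(x)=t(S_\alpha^j(x))=b(z_j)=b_{j+1}(x/\alpha)$ is exactly assertion (ii) after reindexing.

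\textbf{Part (iii).} The induction already yields $B^j(x/\alpha)\notin(1/\beta\alpha,1/\beta]$ for all $0\le j<\ell_\alpha(x)$, so it remains to confirm that the orbit does enter this interval at $j=\ell_\alpha(x)$. The correspondence $S_\alpha^{\ell_\alpha(x)}(x)=\alpha B^{\ell_\alpha(x)}(x/\alpha)$ holds with $S_\alpha^{\ell_\alpha(x)}(x)\in(0,1]$, while $S_\alpha^{\ell_\alpha(x)+1}(x)\le 0$ by the definition of $\ell_\alpha(x)$. Were $S_\alpha^{\ell_\alpha(x)}(x)\in(0,1/\beta]$ the next iterate would equal $\beta S_\alpha^{\ell_\alpha(x)}(x)>0$; hence we are on the $t=1$ branch, and $\beta S_\alpha^{\ell_\alpha(x)}(x)-\alpha\le 0$ forces $S_\alpha^{\ell_\alpha(x)}(x)\in(1/\beta,\alpha/\beta]$, i.e. $B^{\ell_\alpha(x)}(x/\alpha)\in(1/\beta\alpha,1/\beta]$. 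Thus $\ell_\alpha(x)$ is the first such index, proving (iii).

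\textbf{Expected obstacle.} There is no deep difficulty here; the only delicate point is the bookkeeping of the half-open endpoints. In particular one must notice that $z=1/\beta$ belongs to the stopping interval $(1/\beta\alpha,1/\beta]$ even though the digits formally agree there, the reason being that the corresponding iterate lands exactly on the fixed point $0$ (so $\ell_\alpha(x)$ is reached), and one should verify that the initial values $x/\alpha$ genuinely lie in the open interval $(0,1)$ for both choices of $x$ so that $B$ is applied in its proper domain. Once the threshold picture of the first paragraph is set up, these checks are routine and the entire argument reduces to the single inequality comparison $1/(\beta\alpha)<1/\beta$.
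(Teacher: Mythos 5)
Your proof is correct and follows essentially the same route as the paper's: an induction on $j$ whose key point is that for $j<\ell_\alpha(x)$ the orbit avoids the mismatch window $(1/\beta,\alpha/\beta]$ (equivalently $B^j(x/\alpha)\notin(1/\beta\alpha,1/\beta]$), so the branch choices of $S_\alpha$ and $B$ agree. The only cosmetic difference is that you re-derive this exclusion directly from the definition of $\ell_\alpha$, whereas the paper cites its Corollary \ref{S_alpha_matching_hole} for the same fact.
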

\begin{proof}
Claim (iii) will follow from claim (i), Corollary \ref{S_alpha_matching_hole} and the fact that $\ell_\alpha(x)=\ell_\alpha(-x)$.  We prove claim (i) via induction on $j$.  Certainly $S_\alpha^j(x)=\alpha B^j(x/\alpha)$ for $j=0$.  Now suppose this equality holds for some $j=k-1$ with $0\le k-1<\ell_\alpha(x)$.  By Corollary \ref{S_alpha_matching_hole}, $S_\alpha^{k-1}(x)\in[0,1]\backslash(1/\beta,\alpha/\beta]$, and we find
\begin{align*}
S_\alpha^k(x)&=\begin{cases}
\beta S_\alpha^{k-1}(x), & S_\alpha^{k-1}(x)\in[0,1/\beta]\\
\beta S_\alpha^{k-1}(x)-\alpha, & S_\alpha^{k-1}(x)\in(\alpha/\beta,1]\\
\end{cases}\\
&=\begin{cases}
\beta\alpha B^{k-1}(x/\alpha), & B_\alpha^{k-1}(x/\alpha)\in[0,1/\beta\alpha]\\
\beta\alpha B^{k-1}(x/\alpha)-\alpha, & B_\alpha^{k-1}(x/\alpha)\in(1/\beta,1/\alpha]\\
\end{cases}\\
&=\alpha B^{k}(x/\alpha),
\end{align*}
so the first claim holds. Furthermore, the equality in (i) gives for each $1\le j\le\ell_\alpha(x)$ that $S_\alpha^{j-1}(x)\in[0,1/\beta]$ if and only if $B^{j-1}(x/\alpha)\in[1,1/\beta\alpha]$ and $S_\alpha^{j-1}(x)\in(\alpha/\beta,1]$ if and only if $B^{j-1}(x/\alpha)\in(1/\beta,1/\alpha]$.  Thus $s_{\alpha,j}(x)=b_{j}(x/\alpha)$ for such $j$, proving claim (ii).
\end{proof}

Corollary \ref{S_alpha_matching_hole}, Lemma \ref{S_alpha_and_B} and symmetry of $S_\alpha$ give yet another characterisation of matching in terms of the map $B$:

\begin{cor}\label{B_matching_hole}
For $\alpha\neq 1$, $S_\alpha$ has matching if and only if there exists some $j\ge0$ such that 
\[B^j(1/\alpha)\in (1/\beta\alpha,1/\beta]\ \ \ \text{or}\ \ \ B^j(1-1/\alpha)\in (1/\beta\alpha,1/\beta].\]
Moreover, $\ell_\alpha(1)$ and $\ell_\alpha(1-\alpha)$, respectively, are the infimums over all $j$ for which the above inclusions hold.
\end{cor}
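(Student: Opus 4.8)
The plan is to assemble the three cited ingredients—Corollary \ref{S_alpha_matching_hole}, Lemma \ref{S_alpha_and_B}, and the oddness of $S_\alpha$—with essentially no new computation; the content of the corollary is purely a translation of the matching criterion from orbits of $S_\alpha$ to orbits of $B$, so the work is bookkeeping.

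First I would use symmetry to move the condition on the orbit of $1-\alpha$ into the positive half-line, where Lemma \ref{S_alpha_and_B} applies. Since $t(-x)=-t(x)$, the map $S_\alpha$ is odd, whence $S_\alpha^j(1-\alpha)=-S_\alpha^j(\alpha-1)$ for all $j$; consequently $S_\alpha^j(1-\alpha)\in[-\alpha/\beta,-1/\beta)$ if and only if $S_\alpha^j(\alpha-1)\in(1/\beta,\alpha/\beta]$. Combined with Corollary \ref{S_alpha_matching_hole}, this rephrases matching as the existence of some $j\ge 1$ and some $x\in\{1,\alpha-1\}$ with $S_\alpha^j(x)\in(1/\beta,\alpha/\beta]$, and identifies $\ell_\alpha(1)$ and $\ell_\alpha(1-\alpha)=\ell_\alpha(\alpha-1)$—equal since $\ell_\alpha(\cdot)$ is defined through $|x|$—as the respective infimums of such $j$.

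Next I would apply Lemma \ref{S_alpha_and_B}(iii), which for $x\in\{1,\alpha-1\}$ states precisely that $\ell_\alpha(x)$ is the infimum over all $j$ for which $B^j(x/\alpha)\in(1/\beta\alpha,1/\beta]$. Substituting $x/\alpha=1/\alpha$ for $x=1$ and $x/\alpha=1-1/\alpha$ for $x=\alpha-1$ converts the two $S_\alpha$-conditions into the two $B$-conditions appearing in the statement. Since $S_\alpha$ has matching exactly when $\ell_\alpha=\min\{\ell_\alpha(1),\ell_\alpha(1-\alpha)\}<\infty$ by Lemma \ref{ell(alpha)}—equivalently, when at least one of these two infimums is finite—the asserted equivalence follows, and the ``moreover'' clause is read off directly from the identification of $\ell_\alpha(1)$ and $\ell_\alpha(1-\alpha)$ as those infimums.

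I expect no serious obstacle: the corollary is a formal consequence of results already established. The only point demanding genuine care is the symmetry step, because Lemma \ref{S_alpha_and_B} is stated for $x\in\{1,\alpha-1\}$ rather than for $1-\alpha$ itself; one must explicitly invoke oddness of $S_\alpha$ to turn the negative-orbit condition for $1-\alpha$ into the positive-orbit condition for $\alpha-1$, and observe that $\ell_\alpha(1-\alpha)=\ell_\alpha(\alpha-1)$ so that the infimum for the second $B$-condition is correctly matched to $\ell_\alpha(1-\alpha)$.
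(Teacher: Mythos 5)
Your proposal is correct and follows exactly the paper's route: the paper gives no separate argument for this corollary, deriving it precisely from Corollary \ref{S_alpha_matching_hole}, Lemma \ref{S_alpha_and_B} and the symmetry (oddness) of $S_\alpha$, just as you do. Your explicit handling of the symmetry step and the identification $\ell_\alpha(1-\alpha)=\ell_\alpha(\alpha-1)$ is the right bookkeeping and matches the intended proof.
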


The previous results together with ergodicity of $B$ can now be used to prove that $S_\alpha$ has matching for a set of parameters $\alpha$ of full Lebesgue measure.  The proof is nearly identical to that of Proposition 2.3 of \cite{dajani_kalle_20} but is included here for the ease of the reader.

\begin{prop}\label{matching_ae}
The map $S_\alpha$ has matching for Lebesgue--a.e. $\alpha\in[1,\beta]$.  
\end{prop}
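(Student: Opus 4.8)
The plan is to transfer the almost-everywhere matching question, via Corollary \ref{B_matching_hole}, into a statement about orbits of the ergodic $\beta$-transformation $B$ and then invoke ergodicity together with a change of variables. By Corollary \ref{B_matching_hole}, $S_\alpha$ fails to have matching precisely when \emph{neither} orbit $(B^j(1/\alpha))_{j\ge 0}$ \emph{nor} $(B^j(1-1/\alpha))_{j\ge 0}$ ever enters the interval $H_\alpha:=(1/(\beta\alpha),1/\beta]$. So I would define the non-matching parameter set
\[
\mathcal{N}:=\{\alpha\in(1,\beta]\ :\ B^j(1/\alpha)\notin H_\alpha\text{ and }B^j(1-1/\alpha)\notin H_\alpha\text{ for all }j\ge 0\},
\]
and aim to show $\lambda(\mathcal{N})=0$; since $\{1\}$ is a single point, this gives matching for Lebesgue-a.e.\ $\alpha\in[1,\beta]$.

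\emph{Change of variables.} The obstacle in applying ergodicity directly is that the target interval $H_\alpha$ depends on $\alpha$, so the orbit and the ``hole'' move together. The key trick—following Proposition 2.3 of \cite{dajani_kalle_20}—is to substitute $u=1/\alpha$ (or the companion $v=1-1/\alpha$), which ranges over a fixed interval as $\alpha$ ranges over $(1,\beta]$, and to compare the condition $B^j(u)\in H_\alpha$ with membership of $B^j(u)$ in a \emph{fixed} reference set. Concretely, $1/(\beta\alpha)=u/\beta$ and $1/\beta$ is constant, so the hole condition $B^j(u)\in(u/\beta,1/\beta]$ is an explicit relation between the orbit point and the parameter $u$ itself. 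I would argue that for the orbit to avoid this hole forever is extremely restrictive: first I would note that $B$ preserves the measure $d\nu=h\,d\lambda$ where $h$ is the bounded Parry density, so $\nu$ and $\lambda$ are equivalent on $[0,1]$, and $B$ is ergodic with respect to $\nu$.

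\emph{Ergodic argument.} The heart of the proof is a Fubini/ergodicity argument. Fix a small fixed interval $A\subset(1/\beta,1]$ contained in every $H_\alpha$ for $\alpha$ in some positive-measure sub-range (for instance, shrinking $H_\alpha$ to a uniform sub-interval $A$ whose closure lies strictly inside $(1/(\beta\alpha),1/\beta]$ for all relevant $\alpha$). By the Birkhoff ergodic theorem applied to the indicator $\mathbf 1_A$, for $\nu$-a.e.\ (hence $\lambda$-a.e.) starting point $x$, the orbit $(B^j(x))_{j\ge0}$ visits $A$ with asymptotic frequency $\nu(A)>0$; in particular it enters $A$, and therefore enters $H_\alpha$, for a.e.\ $x$. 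Transferring this back through the smooth, measure-class-preserving maps $\alpha\mapsto 1/\alpha$ and $\alpha\mapsto 1-1/\alpha$ (each a diffeomorphism with bounded, bounded-away-from-zero derivative on the relevant range), I conclude that for Lebesgue-a.e.\ $\alpha$ at least one of the two orbits enters $H_\alpha$, so $\lambda(\mathcal{N})=0$.

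\emph{The main obstacle} will be handling the $\alpha$-dependence of the hole $H_\alpha$ rigorously: the clean Birkhoff statement applies to a \emph{fixed} target set, whereas here both the orbit's starting point $1/\alpha$ and the hole's left endpoint $1/(\beta\alpha)$ vary with $\alpha$. The correct way around this is the change of variables together with a uniform lower bound—showing that one can choose a single fixed interval $A$ of positive $\nu$-measure contained in $H_\alpha$ for all $\alpha$ outside a null set, so that ``the orbit enters $A$ for a.e.\ starting point'' does the work, and the coupling between orbit and hole becomes harmless. Since the paper explicitly states the proof is nearly identical to Proposition 2.3 of \cite{dajani_kalle_20}, I expect the remaining steps to be routine verifications of the measure-equivalence and the diffeomorphism bounds.
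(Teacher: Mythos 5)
Your proposal follows essentially the same route as the paper: reduce matching to the orbit of $1/\alpha$ (or $1-1/\alpha$) under $B$ entering the hole $(1/\beta\alpha,1/\beta]$ via Corollary \ref{B_matching_hole}, replace the moving hole by a fixed target interval, invoke ergodicity of $B$ with respect to a measure equivalent to Lebesgue, and pull the null set back through $\alpha\mapsto 1/\alpha$. Two small corrections are needed, though. First, the fixed interval $A$ must sit inside $(1/\beta\alpha,1/\beta]$, hence in $(0,1/\beta]$, not in $(1/\beta,1]$ as you wrote; presumably a slip. Second, and more substantively, your closing claim that one can choose ``a single fixed interval $A$ \ldots contained in $H_\alpha$ for all $\alpha$ outside a null set'' cannot work: $H_\alpha=(1/\beta\alpha,1/\beta]$ has length $(1-1/\alpha)/\beta$, which tends to $0$ as $\alpha\to 1^+$, so no fixed interval of positive measure lies in all of them. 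The fix is exactly the countable exhaustion you hint at earlier with your ``positive-measure sub-range'': for each integer $k>\beta^3$ take $A_k:=(1/\beta-1/k,1/\beta]$, which is contained in $H_\alpha$ precisely for $\alpha>k/(k-\beta)$, obtain a null exceptional set $A_k'$ of non-matching parameters in each range $(k/(k-\beta),\beta]$, and observe that these ranges exhaust $(1,\beta]$ so that the non-matching set is the countable union $\bigcup_k A_k'$ of null sets. This is precisely how the paper's proof is organised; with that adjustment your argument is complete. (Note also that plain ergodicity of $B$ with respect to Lebesgue measure already gives that a.e.\ orbit enters any fixed interval of positive measure, so the Parry density and Birkhoff frequencies, while correct, are not needed.)
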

\begin{proof}
Let $\alpha\in(1,\beta]$ and $k\in\mathbb{N}$ with $k>\beta^3$.  By ergodicity of $B$ with respect to Lebesgue measure (\S 4 of \cite{renyi_57}), for Lebesgue--a.e. $x\in[0,1]$ there exists some $j\ge 1$ such that $B^j(x)\in(1/\beta-1/k,1/\beta]$.  Note that $1/\beta\alpha<1/\beta-1/k$ if and only if $\alpha>k/(k-\beta)$.  Thus for Lebesgue--a.e. $\alpha\in(k/(k-\beta),\beta]$, there exists some $j\ge 1$ such that 
\[B^j(1/\alpha)\in(1/\beta-1/k,1/\beta]\subset(1/\beta\alpha,1/\beta].\]
By Corollary \ref{B_matching_hole}, $S_\alpha$ has matching for Lebesgue--a.e. $\alpha\in(k/(k-\beta),\beta]$.  Let $A_k$ denote the set of $\alpha\in(k/(k-\beta),\beta]$ for which $S_\alpha$ does not have matching.  Then $\cup_{k>\beta^3}A_k$ has Lebesgue measure $0$ and equals the set of all $\alpha\in(1,\beta]$ for which $S_\alpha$ does not have matching.  
\end{proof}

The finer structure of the set of matching parameters $\alpha\in[1,\beta]$ is considered in \S\S\ref{Matching words and intervals} and \ref{Cascades of matching intervals} below.  Before investigating this structure, we show that matching occurs for $S_\alpha$ if and only if it occurs for the corresponding jump transformation $T_\alpha$.  The following lemma may be deduced from the general theory of jump transformations, but a proof is included for completeness.

\begin{lem}\label{T_as_passage_of_S}
Fix $x\in[-1,1]$ and let $j_1<j_2<j_3<\dots$ be an enumeration of the set
\[\{j\ge 0\ |\ S_\alpha^j(x)\in J_0\}.\]
Then $T_\alpha^k(x)=S_\alpha^{j_k+1}(x)$ for all $k\ge 1$.
\end{lem}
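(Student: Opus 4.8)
The plan is to proceed by induction on $k$, using the defining relation (\ref{T_in_terms_of_S}) together with the observation recorded just before it that $S_\alpha(x)\in J_0$ whenever $x\in J_{-1}\cup J_1$. This last fact is the structural backbone of the argument: it means the $S_\alpha$-orbit of any point can never lie in $J_{-1}\cup J_1$ for two consecutive steps, so between any two consecutive visits to $J_0$ the orbit spends at most one step outside $J_0$. Consequently the set $\{j\ge 0 : S_\alpha^j(x)\in J_0\}$ is infinite---justifying the enumeration $j_1<j_2<\cdots$---and the gaps satisfy $j_{k+1}-j_k\in\{1,2\}$, with the value $1$ occurring precisely when $S_\alpha^{j_k+1}(x)\in J_0$ and the value $2$ when $S_\alpha^{j_k+1}(x)\in J_{-1}\cup J_1$.

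For the base case $k=1$ I distinguish whether $x\in J_0$. If $x\in J_0$ then $j_1=0$ and (\ref{T_in_terms_of_S}) gives $T_\alpha(x)=S_\alpha(x)=S_\alpha^{j_1+1}(x)$; if instead $x\in J_{-1}\cup J_1$, then $S_\alpha(x)\in J_0$ forces $j_1=1$, and (\ref{T_in_terms_of_S}) gives $T_\alpha(x)=S_\alpha^2(x)=S_\alpha^{j_1+1}(x)$. For the inductive step I assume $T_\alpha^k(x)=S_\alpha^{j_k+1}(x)$ and compute $T_\alpha^{k+1}(x)=T_\alpha\bigl(S_\alpha^{j_k+1}(x)\bigr)$, applying (\ref{T_in_terms_of_S}) to the point $y:=S_\alpha^{j_k+1}(x)$. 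There are two cases according to whether $y\in J_0$: when $y\in J_0$ the gap analysis gives $j_{k+1}=j_k+1$ and $T_\alpha(y)=S_\alpha(y)=S_\alpha^{j_k+2}(x)=S_\alpha^{j_{k+1}+1}(x)$; when $y\in J_{-1}\cup J_1$ we have $j_{k+1}=j_k+2$ and $T_\alpha(y)=S_\alpha^2(y)=S_\alpha^{j_k+3}(x)=S_\alpha^{j_{k+1}+1}(x)$. In either case the desired identity holds, closing the induction.

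The argument is essentially a bookkeeping exercise; the one point requiring care---and the only place the specific geometry of $S_\alpha$ enters---is the identification of $j_{k+1}$ with either $j_k+1$ or $j_k+2$. This is exactly where the containment $S_\alpha(J_{-1}\cup J_1)\subseteq J_0$ is indispensable, since it guarantees that in the second case the ``skipped'' index $j_k+1$ is the only index strictly between $j_k$ and $j_{k+1}$, and that it corresponds precisely to the single $S_\alpha$-step absorbed by the $S_\alpha^2$ branch of $T_\alpha$. I expect no genuine obstacle beyond keeping this correspondence between return times and the branch of $T_\alpha$ straight throughout the induction.
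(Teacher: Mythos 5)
Your proof is correct and follows essentially the same route as the paper's: induction on $k$, with the containment $S_\alpha(J_{-1}\cup J_1)\subset J_0$ used to identify $j_{k+1}$ as $j_k+1$ or $j_k+2$ according to the branch of $T_\alpha$ applied (the paper phrases this via the minimal $i\in\{0,1\}$ with $S_\alpha^i(S_\alpha^{j_k+1}(x))\in J_0$). No gaps.
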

\begin{proof}
The claim is immediate for $k=1$ by (\ref{T_in_terms_of_S}) and the fact that $S_\alpha(J_{-1}\cup J_1)\subset J_0$.  Now suppose the result holds for some $k\ge 1$, and let $i\in\{0,1\}$ be minimal such that $S_\alpha^i(S_\alpha^{j_k+1}(x))\in J_0.$  By definition, then, $j_{k+1}=j_k+i+1$, and 
\[T_\alpha^{k+1}x=T_{\alpha}(S_\alpha^{j_k+1}x)=S_\alpha^{i+1}(S_\alpha^{j_k+1}x)=S_\alpha^{j_{k+1}+1}x.\]
\end{proof}

\begin{prop}\label{T_matching_iff_S_matching}
The matching parameters $\alpha\in[1,\beta]$ for $T_\alpha$ and for $S_\alpha$ coincide. 
\end{prop}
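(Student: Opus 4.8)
The plan is to prove the equivalence by showing that matching of $S_\alpha$ is equivalent to matching of $T_\alpha$, using the fact (Lemma \ref{T_as_passage_of_S}) that the $T_\alpha$-orbit of a point is precisely the subsequence of the $S_\alpha$-orbit lying in $J_0$, taken one step after each visit. The critical points of both maps are $\pm 1/\beta$, and by symmetry it suffices to analyse the positive critical point. For $T_\alpha$ the one-sided limits at $1/\beta$ are $\lim_{x\to 1/\beta^-}T_\alpha(x)=\beta^2\cdot(1/\beta)-\beta\alpha=\beta-\beta\alpha$ and $\lim_{x\to 1/\beta^+}T_\alpha(x)=\beta-\beta\alpha$ — wait, I must recompute: the right branch of $T_\alpha$ is $\beta^2 x-\beta\alpha$, so the right limit is $\beta-\beta\alpha$, while the left limit uses the middle branch $\beta x$, giving $1$. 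So the relevant $T_\alpha$-orbits to compare are those of $1$ and of $\beta-\beta\alpha=S_\alpha(1-\alpha)=S_\alpha^2(1)$-type values. The key observation is that $1$ and $1-\alpha$ are exactly the $S_\alpha$-one-sided limits at $1/\beta$, and one step of $S_\alpha$ carries $1-\alpha$ and related points into $J_0$, so the $T_\alpha$-orbits of the $T_\alpha$-limits are governed by the same two $S_\alpha$-orbits that define $S_\alpha$-matching.

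The main work is to set up a clean correspondence between the two matching conditions. First I would record that the $T_\alpha$-critical limits are $y_-=1$ and $y_+=\beta-\beta\alpha$, and observe that $\beta-\beta\alpha=S_\alpha^2(1)-\text{stuff}$ may be routed back through $S_\alpha$; more usefully, since $S_\alpha(J_{\pm1})\subset J_0$ and $T_\alpha=S_\alpha^2$ on $J_{\pm1}$, the $T_\alpha$-orbits of both critical limits are subsequences (under Lemma \ref{T_as_passage_of_S}) of the $S_\alpha$-orbits of $1$ and $1-\alpha$. The forward direction is then the easier one: if $S_\alpha$ has matching, there is some $m$ with $S_\alpha^m(1)=S_\alpha^m(1-\alpha)$; once the two $S_\alpha$-orbits coincide at step $m$, they coincide forever, so in particular they coincide at every subsequent visit to $J_0$, and Lemma \ref{T_as_passage_of_S} translates this into equality of the corresponding $T_\alpha$-orbits after finitely many steps, giving $T_\alpha$-matching.

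The converse is where I expect the main obstacle. Suppose $T_\alpha$ has matching, so the $T_\alpha$-orbits of the two critical limits agree after finitely many steps. I would use Lemma \ref{T_as_passage_of_S} to pull this equality back to the $S_\alpha$-level: the agreeing $T_\alpha$-orbit points are genuine $S_\alpha$-orbit points (of the form $S_\alpha^{j_k+1}$), so equality of $T_\alpha$-orbits yields some $S_\alpha^{p}(1)=S_\alpha^{q}(1-\alpha)$ for appropriate indices $p,q$. The subtlety is that this gives equality with possibly \emph{different} indices $p\ne q$, whereas $S_\alpha$-matching (via the proposition following Proposition \ref{difference_of_orbits}) requires a common index $m$. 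However, this gap is exactly closed by the already-proved equivalence that $S_\alpha$ has matching iff there exist \emph{any} integers $M,N\ge 1$ with $S_\alpha^M(1)=S_\alpha^N(1-\alpha)$; once I produce such $M,N$ from the $T_\alpha$-matching, that proposition upgrades it to genuine $S_\alpha$-matching. The delicate point to handle carefully is the degenerate/Markov cases (e.g.\ $\alpha=1$, and the non-matching Markov parameters $1+1/\beta^2$, $1+1/\beta^3$), where one must check that neither map has matching; these are covered by the explicit case analysis (i)–(vi) preceding Proposition \ref{difference_of_orbits}, so I would dispatch them by direct reference rather than the general argument.
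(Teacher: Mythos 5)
Your proposal is correct and follows essentially the same route as the paper: identify the one-sided limits $1$ and $\beta(1-\alpha)=S_\alpha(1-\alpha)$, use Lemma \ref{T_as_passage_of_S} to pull a $T_\alpha$-orbit coincidence back to an $S_\alpha$-orbit coincidence with possibly unequal indices (which the earlier equivalence proposition upgrades to genuine matching), and conversely push the common matching index --- whose merge point lies in $J_0$ --- forward through the jump transformation. The only cosmetic difference is that the paper streamlines the $S_\alpha\Rightarrow T_\alpha$ direction by noting directly that $S_\alpha^{m}(1)=S_\alpha^{m}(1-\alpha)\in J_0$, so no separate treatment of the Markov cases is needed.
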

\begin{proof}
Recall that $T_\alpha$ has critical points at $\pm 1/\beta$, and note that $\lim_{x\to 1/\beta^-}T_\alpha(x)=1$ while $\lim_{x\to 1/\beta^+}T_\alpha(x)=\beta(1-\alpha)$.  Due to symmetry, $T_\alpha$ has matching if and only if there are integers $M,N>0$ for which $T_\alpha^M(1)=T_\alpha^N(\beta(1-\alpha))$.

Suppose first that $T_\alpha$ has matching.  Then $T_\alpha^M(1)=T_\alpha^N(\beta(1-\alpha))$ for some $M,N>0$.  By (\ref{T_in_terms_of_S}) and the fact that $S_\alpha(1-\alpha)=\beta(1-\alpha)$, this implies the existence of some $M',N'>0$ for which $S_\alpha^{M'}(1)=S_\alpha^{N'}(1-\alpha)$.  

Conversely, suppose $S_\alpha$ has matching with matching index $m:=m(\alpha)$.  From the proof of Proposition \ref{difference_of_orbits} it is clear that $S_\alpha^m(1)=S_\alpha^m(1-\alpha)\in J_0$.  By Lemma \ref{T_as_passage_of_S}, there are $M,N>0$ for which
\[T_\alpha^M(1)=S_\alpha^{m+1}(1)=S_\alpha^{m+1}(1-\alpha)=S_\alpha^m(\beta(1-\alpha))=T_\alpha^N(\beta(1-\alpha)).\]
\end{proof}

%%%%%%%%%%
\subsection{Matching words and intervals}\label{Matching words and intervals}
%%%%%%%%%%

When $S_\alpha$ has matching, we call the first $m(\alpha)<\infty$ digits of the $S_\alpha$-expansion of $1$ the \textit{matching word} corresponding to $\alpha$.  A maximal subinterval of $[1,\beta]$ on which matching words coincide is called a \textit{matching interval} corresponding to the common matching word.  Here we classify matching words and matching intervals (Corollary \ref{matching_iff_property_M}); as all matching parameters belong to some matching interval, this gives a complete classification of matching parameters $\alpha\in[1,\beta]$.  (Propositions \ref{matching_implies_property_M}, \ref{I_d_determines_expansions} and \ref{matching_intervals_thm} imply that this also classifies the first $m(\alpha)<\infty$ digits of the $S_\alpha$-expansions of $1-\alpha$ for $S_\alpha$ with matching and the maximal subintervals of parameters $\alpha$ on which these digits coincide.)  Note that matching words and intervals for $\alpha\in[1,\beta]\backslash (1,1+1/\beta^3)$ have been implicitly determined via the cases considered in \S\ref{Matching almost everywhere}.  For instance, $(1+1/\beta^2,\beta]$ is the matching interval corresponding to the matching word $10$, and the $S_\alpha$-expansion of $1-\alpha$ for each $\alpha\in(1+1/\beta^2,\beta]$ begins with $0(-1)$.  Similarly, $(1+1/\beta^3,1+1/\beta^2)$ is the matching interval corresponding to the matching word $1001$, and the $S_\alpha$-expansion of $1-\alpha$ for each $\alpha$ in this interval begins with $00(-1)0$.  

Denote by $\prec$ the lexicographical ordering on $\{0,\pm 1\}^\mathbb{N}$.  Note that $\prec$ may also be defined on the set $\{0,\pm 1\}^*$ of finite words with alphabet $-1,0,1$ by first sending $\mathbf w\in\{0,\pm 1\}^*$ to $\mathbf w0^\infty$.  
\begin{defn}
Let
\[\mathbf{w}_0:=00\prec\mathbf{w}_1:=001\prec\mathbf{w}_2:=01.\]
We say that $\mathbf d\in\{0,1\}^*$ is in \textit{admissible block form} if $\mathbf d=10$ or
\[\mathbf d=1\mathbf{w}_{i_1}\mathbf{w}_{i_2}\cdots\mathbf{w}_{i_n}(1-i_n/2)\]
for some $i_1,\dots,i_n\in\{0,1,2\},\ n\ge 1$ with $i_n\neq 1$, and, when $n\ge 2$, $i_1=2$.  The collection of all words in admissible block form is denoted $\mathcal{B}$.
\end{defn}

The condition that a word in admissible block form ends in $\mathbf w_{i_n}(1-i_n/2),\ i_n\neq 1$, guarantees that the final three digits are either $001$ or $010$ (recall (\ref{matching_final_digits})); however, not every word ending this way belongs to $\mathcal{B}$:

\begin{eg}\label{adm_block_form_eg}
One verifies that
\[\mathbf d:=1\mathbf w_2\mathbf w_0\mathbf w_1 \mathbf w_0 1=10100001001\in\mathcal{B},\]
whereas
\[\mathbf d':=1010001\notin\mathcal{B}.\]
\end{eg}

Note that the indices $i_j$ for $\mathbf d\in\mathcal{B}$ are uniquely determined; that is, if
\[1\mathbf{w}_{i_1}\mathbf{w}_{i_2}\cdots\mathbf{w}_{i_n}(1-i_n/2)=1\mathbf{w}_{j_1}\mathbf{w}_{j_2}\cdots\mathbf{w}_{j_m}(1-j_m/2),\]
then $m=n$ and $i_k=j_k$ for each $1\le k\le n$.  Define $\varphi:\mathcal{B}\to\{0,-1\}^*$ by $\varphi(10)=\overline{01}$ and for each $\mathbf d\in\mathcal{B}$ of the form 
\[\mathbf d=1\mathbf{w}_{i_1}\mathbf{w}_{i_2}\cdots\mathbf{w}_{i_n}(1-i_n/2),\]
by 
\[\varphi(\mathbf d):=\overline{0\mathbf{w}_{2-i_1}\mathbf{w}_{2-i_2}\cdots\mathbf{w}_{2-i_n}(i_n/2)},\]
where $\overline{\mathbf w}:=-\mathbf w$ for each $\mathbf w\in\{0,\pm 1\}^*$.

Let $\sigma:\{0,\pm 1\}^\mathbb{N}\to \{0,\pm 1\}^\mathbb{N}$ denote the left shift defined by $\sigma((w_j)_{j\ge 1})=(w_{j+1})_{j\ge 1}$ for each $(w_j)_{j\ge1}\in\{0,\pm 1\}^\mathbb{N}$; as with the lexicographical ordering, $\sigma$ is also defined on the set $\{0,\pm 1\}^*$ of finite words by sending $\mathbf w\in\{0,\pm 1\}^*$ to $\mathbf w0^\infty$.  We remark that for each $T\in\{S_\alpha,T_\alpha,B\}$, the left shift of the $T$-expansion of $x$ equals the $T$-expansion of $T(x)$.

\begin{defn}\label{property_M}
A word $\mathbf d\in\mathcal{B}$ satisfies \textit{Property $M$} if, for each $j\ge 0$, both $\sigma^j(\mathbf d)\preceq \mathbf d$ and $\sigma^j(\overline{\varphi(\mathbf d)})\preceq \mathbf d$.  Denote by $\mathcal{M}\subset\mathcal{B}$ the collection of all words $\mathbf d$ satisfying Property $M$.  We call $10$ and $1001$ the \textit{exceptional} words in $\mathcal{M}$ and denote by $\mathcal{M}_U:=\mathcal{M}\backslash\{10,1001\}$ the collection of \textit{unexceptional} words in $\mathcal{M}$.
\end{defn}

\begin{eg}
Let $\mathbf d\in\mathcal{B}$ be as in Example \ref{adm_block_form_eg}.  Then
\[\varphi(\mathbf d)=\overline{0\mathbf w_0\mathbf w_{2}\mathbf w_{1}\mathbf w_20}=\overline{00001001010},\]
and since both $\sigma^j(\mathbf d)\preceq\mathbf d$ and $\sigma^j(\overline{\varphi(\mathbf d)})\preceq\mathbf d$ for all $j\ge 0$, we have $\mathbf d\in\mathcal{M}$.
\end{eg}

We shall see that Property $M$ classifies matching words of the maps $S_\alpha$.  To show that $\mathcal{M}$ contains all matching words we need the following observation, which is not novel, but for which a proof is included for completeness: 

\begin{lem}\label{lexicographical_ordering}
Fix $\alpha\in[1,\beta]$ and $x,y\in[-1,1]$.  Then $x<y$ if and only if $(s_{\alpha,j}(x))_{j\ge 1}\prec(s_{\alpha,j}(y))_{j\ge 1}$.  Similarly, for $x,y\in[0,1]$, $x<y$ if and only if $(b_j(x))_{j\ge 1}\prec (b_j(y))_{j\ge 1}$.
\end{lem}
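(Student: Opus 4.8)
Prove Lemma~\ref{lexicographical_ordering}: for fixed $\alpha\in[1,\beta]$, the map $x\mapsto (s_{\alpha,j}(x))_{j\ge 1}$ is strictly order-preserving into $(\{0,\pm 1\}^{\mathbb N},\prec)$, and similarly $x\mapsto (b_j(x))_{j\ge 1}$ on $[0,1]$.

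The plan is to prove both biconditionals by establishing the two one-directional implications and combining them via a dichotomy. I would first record the two facts already available in the excerpt: the digit maps are \emph{injective} (two points with the same $S_\alpha$-expansion are equal, via Equation~(\ref{S_alpha^k_eqn}); the same holds for $B$ via Equation~(\ref{B^k_eqn})), so distinct points have distinct expansions. This reduces the ``only if'' direction to proving the \emph{contrapositive of one implication}: I only need to show that $x<y$ implies $(s_{\alpha,j}(x))\prec(s_{\alpha,j}(y))$, and then injectivity plus totality of $\prec$ gives the converse for free. Indeed, if $x<y$ forces $\prec$, then by swapping $x,y$, $y<x$ forces $\succ$; and $x=y$ forces equality of expansions; so the three trichotomy cases for the real order map bijectively onto the three cases for $\prec$, yielding the full biconditional in both statements.

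The core of the argument is therefore the forward implication, which I would prove by contradiction on the first disagreeing digit. Suppose $x<y$ but $(s_{\alpha,j}(x))\not\prec(s_{\alpha,j}(y))$; since the expansions differ (injectivity), let $k\ge 1$ be the least index with $s_{\alpha,k}(x)\ne s_{\alpha,k}(y)$, so by assumption $s_{\alpha,k}(x)>s_{\alpha,k}(y)$. Because $s_{\alpha,j}(x)=t(S_\alpha^{j-1}(x))$ and $t$ is the monotone nondecreasing step function sending $J_{-1}\mapsto -1,\ J_0\mapsto 0,\ J_1\mapsto 1$ according to the partition $[-1,-1/\beta)\cup[-1/\beta,1/\beta]\cup(1/\beta,1]$, I would argue that $S_\alpha^{k-1}(x)$ and $S_\alpha^{k-1}(y)$ lie in different partition pieces with the former strictly to the right, i.e.\ $S_\alpha^{k-1}(x)>S_\alpha^{k-1}(y)$. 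The point is that for $j<k$ the digits agree, and on any single piece $J_t$ the map $S_\alpha$ acts as the \emph{increasing} affine map $z\mapsto \beta z-t\alpha$; hence order is preserved under each common-digit step, giving $S_\alpha^{k-1}(x)>S_\alpha^{k-1}(y)$ from $x>y$---wait, but we assumed $x<y$, so in fact this yields $S_\alpha^{k-1}(x)<S_\alpha^{k-1}(y)$, and then two points in increasing-$t$ pieces with the left one in the lower piece cannot have $t(S_\alpha^{k-1}(x))>t(S_\alpha^{k-1}(y))$, the desired contradiction.

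The main obstacle---and the step to handle carefully---is precisely this monotonicity bookkeeping across the first $k-1$ common steps, because $S_\alpha$ is not globally monotone (it has two critical points). I would make it rigorous by an induction: assuming $s_{\alpha,j}(x)=s_{\alpha,j}(y)=t$ for $j\le i<k$ means $S_\alpha^{i-1}(x),S_\alpha^{i-1}(y)\in J_t$, and since $S_\alpha|_{J_t}$ is a strictly increasing affine map, $S_\alpha^{i-1}(x)<S_\alpha^{i-1}(y)$ propagates to $S_\alpha^{i}(x)<S_\alpha^{i}(y)$; starting from $x<y$ this gives $S_\alpha^{k-1}(x)<S_\alpha^{k-1}(y)$. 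Finally, since $t$ is nondecreasing, $S_\alpha^{k-1}(x)<S_\alpha^{k-1}(y)$ forces $s_{\alpha,k}(x)=t(S_\alpha^{k-1}(x))\le t(S_\alpha^{k-1}(y))=s_{\alpha,k}(y)$, contradicting $s_{\alpha,k}(x)>s_{\alpha,k}(y)$. The $B$-statement is identical, replacing $S_\alpha$ by $B(z)=\beta z \pmod 1$ restricted to $[0,1]$, $t$ by the nondecreasing $b\in\{0,1\}$, and noting each branch $z\mapsto \beta z - b$ is strictly increasing on its piece; the injectivity needed there comes from Equation~(\ref{B^k_eqn}).
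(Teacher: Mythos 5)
Your proof is correct and follows essentially the same route as the paper's: an induction over the initial block of agreeing digits using that each branch of $S_\alpha$ (resp.\ $B$) is strictly increasing affine, the monotonicity of the digit function at the first disagreement, and the converse obtained from trichotomy together with uniqueness of expansions. The only cosmetic difference is that you phrase the forward implication as a contradiction while the paper derives $s_{\alpha,n}(x)<s_{\alpha,n}(y)$ directly; the content is identical.
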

\begin{proof}
Suppose $x,y\in[-1,1]$ with $x<y$, and let $n:=\min_{j\ge 1}\{s_{\alpha,j}(x)\neq s_{\alpha,j}(y)\}$.  We first claim for each $0\le j< n$ that $S_\alpha^j(x)<S_\alpha^j(y)$.  This is true by assumption for $j=0$.  If $n=1$, we're finished.  Assume $n>1$ and that the claim holds for some $j=k-1$ with $0\le k-1<n-1$.  Since $s_{\alpha,k}(x)=s_{\alpha,k}(y)$, we have that $S_\alpha$ restricts to a linear function with positive slope on an interval containing $S_\alpha^{k-1}(x)$ and $S_\alpha^{k-1}(y)$.  But $S_\alpha^{k-1}(x)<S_\alpha^{k-1}(y)$ by assumption, so also $S_\alpha^{k}(x)<S_\alpha^{k}(y)$ and the claim holds.  Since $s_{\alpha,n}(x)\neq s_{\alpha,n}(y)$ and $S_\alpha^{n-1}(x)<S_\alpha^{n-1}(y)$, it must be true that $s_{\alpha,n}(x)<s_{\alpha,n}(y)$ and hence $(s_{\alpha,j}(x))_{j\ge 1}\prec(s_{\alpha,j}(y))_{j\ge 1}$.

Now suppose $x\ge y$.  If equality holds, then by uniqueness of $S_\alpha$-expansions, $(s_{\alpha,j}(x))_{j\ge 1}=(s_{\alpha,j}(y))_{j\ge 1}$.  If the inequality is strict, the argument above applies with $x$ and $y$ interchanged.

The proof of the second statement is identical, \textit{mutatis mutandis}.
\end{proof}

\begin{prop}\label{matching_implies_property_M}
Suppose for some $\alpha\in[1,\beta]$
that $S_\alpha$ has matching with index $m:=m(\alpha)$, and let $\mathbf d:=d_1\cdots d_m$ denote the corresponding matching word.  Then $\mathbf d\in\mathcal M$, and $\mathbf e:=\varphi(\mathbf d)$ agrees with the first $m$ digits $e_1\cdots e_m$ of the $S_\alpha$-expansion of $1-\alpha$.
\end{prop}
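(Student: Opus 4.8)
The plan is to read off both the matching word $\mathbf d=d_1\cdots d_m$ and the prefix $e_1\cdots e_m$ of the expansion of $1-\alpha$ simultaneously from the joint evolution of the orbits of $1$ and $1-\alpha$, using the difference dynamics of Proposition~\ref{difference_of_orbits} and the transition graph of Figure~\ref{differences_graph}, and to show that they split into matched blocks realising $\varphi$. First I would dispose of $\alpha\notin(1,1+1/\beta^3)$: by the explicit orbit computations of cases (i)--(vi) in \S\ref{Matching almost everywhere}, the only matching parameters there are $\alpha\in(1+1/\beta^2,\beta]$, with matching word $10$, and $\alpha\in(1+1/\beta^3,1+1/\beta^2)$, with matching word $1001$; for each one checks directly that the word lies in $\mathcal M$ and that $\varphi(10)=\overline{01}$ and $\varphi(1001)=\overline{0010}$ agree with the computed prefixes of $1-\alpha$. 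This leaves the principal case $\alpha\in(1,1+1/\beta^3)$.

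Here the joint orbit begins at the difference $S_\alpha^0(1)-S_\alpha^0(1-\alpha)=\alpha$ with digit pair $(d_1,e_1)=(1,0)$ and, matching being assumed, arrives at the difference $0$ at time $m$. The core of the proof is a return decomposition of the resulting walk in Figure~\ref{differences_graph}. Reading the edge labels there, I would first isolate two facts: every visit to the vertex $\alpha/\beta$ from which the walk does not next step to $0$ carries the pair $(0,0)$, while the two edges into $0$ leave $\alpha/\beta$ with pair $(1,0)$ or $(0,-1)$; hence matching happens exactly upon reaching $\alpha/\beta$ with one of these two terminal pairs. Cutting the walk at its successive visits to $\alpha/\beta$ after the initial step, each piece is a first return to $\alpha/\beta$ and, by checking the admissible outgoing edges, is exactly one of three excursions: $\alpha/\beta\to\alpha\to\alpha/\beta$ with pair $(1,0)$ at $\alpha$ (yielding the $d$-block $\mathbf w_2$ and the $e$-block $\overline{\mathbf w_0}$); the same return with $(0,-1)$ at $\alpha$ (yielding $\mathbf w_0$ and $\overline{\mathbf w_2}$); and $\alpha/\beta\to\alpha\to\beta\alpha\to\alpha/\beta$ with $(0,0)$ at $\alpha$ and $(1,-1)$ at $\beta\alpha$ (yielding $\mathbf w_1$ and $\overline{\mathbf w_1}$). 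Thus each block $\mathbf w_i$ appearing in $\mathbf d$ is accompanied by $\overline{\mathbf w_{2-i}}$ in $e_1\cdots e_m$, which is exactly the rule defining $\varphi$.

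It then remains to assemble the global shape. The leading $1=d_1$ comes from the initial vertex $\alpha$; the first excursion must be of type $\mathbf w_2$, since $S_\alpha^2(1)\in J_1$ (case (v)) forces $d_3=1$, giving $i_1=2$; the terminal digit, read at the final $\alpha/\beta$, equals $1-i_n/2$ while $e_m=\overline{i_n/2}$, consistently with (\ref{matching_final_digits}); and because a $\mathbf w_1$-excursion always returns to $\alpha/\beta$ with the pair $(0,0)$ and never a terminal pair, the last block cannot be $\mathbf w_1$, so $i_n\neq1$. This shows $\mathbf d=1\mathbf w_{i_1}\cdots\mathbf w_{i_n}(1-i_n/2)\in\mathcal B$, that all digits of $\mathbf d$ lie in $\{0,1\}$, and that $e_1\cdots e_m=\varphi(\mathbf d)$, settling the second assertion. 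For Property~$M$ I would note that $\mathbf d$ and $\overline{\varphi(\mathbf d)}=\overline{e_1\cdots e_m}$ are the length-$m$ prefixes of the $S_\alpha$-expansions of $1$ and of $\alpha-1$ (by symmetry of $S_\alpha$); since $\sigma^j$ of an expansion is the expansion of its image point and all image points lie in $[-1,1]$, Lemma~\ref{lexicographical_ordering} dominates every shift of the full expansions of $1$ and of $\alpha-1$ by the full expansion of $1$. Transferring this to the $0^\infty$-padded finite words uses only that the digits $d_k$ are nonnegative: when the first discrepancy falls past the truncation length the shifted tail is $0^\infty$, which is dominated by the nonnegative tail of $\mathbf d$.

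The hard part will be the exhaustive bookkeeping behind the return decomposition: verifying that the three excursions are the \emph{only} first returns to $\alpha/\beta$, that each carries exactly the stated pair of $d$- and $e$-blocks, and that the label compatibility in Figure~\ref{differences_graph} (the end label of each step being the start label of the next) forces the global form $1\mathbf w_2\cdots(1-i_n/2)$ with $i_n\neq1$. This is a finite but delicate check against the graph; once it is in place, both $\mathbf d\in\mathcal B$ and $e_1\cdots e_m=\varphi(\mathbf d)$ follow at once, and Property~$M$ reduces to the short lexicographic argument above.
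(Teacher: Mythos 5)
Your proposal follows essentially the same route as the paper's proof: dispose of $\alpha\notin(1,1+1/\beta^3)$ via the explicit cases, decompose the joint orbit into the three first-return cycles at the vertex $\alpha/\beta$ of Figure~\ref{differences_graph} to obtain the block pairing $(\mathbf w_{i_k},\overline{\mathbf w_{2-i_k}})$ realising $\varphi$, pin down $i_1=2$ from case (v) and the terminal block from (\ref{matching_final_digits}), and deduce Property~$M$ from Lemma~\ref{lexicographical_ordering}. The argument is correct; your explicit handling of the passage from shifts of the infinite expansions to shifts of the $0^\infty$-padded truncations (using nonnegativity of the digits of $\mathbf d$ and $\overline{\mathbf e}$) is a detail the paper leaves implicit.
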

\begin{proof}
From the cases of \S\ref{Matching almost everywhere}, the result holds for $\alpha\notin(1,1+1/\beta^3)$; in particular, $\alpha\in (1+1/\beta^2,\beta]$ and $\alpha\in(1+1/\beta^3,1+1/\beta^2)$ correspond to the exceptional words $\mathbf d=10$ and $\mathbf d=1001$, respectively, in $\mathcal{M}$, and $\varphi(10)=\overline{01},\ \varphi(1001)=\overline{0010}$.  Now assume $\alpha\in(1,1+1/\beta^3)$.  Note that $d_1=1,\ e_1=0$, and 
\[S_\alpha(1)-S_\alpha(1-\alpha)=(\beta-\alpha)-\beta(1-\alpha)=\alpha/\beta.\]
Recall from Equation (\ref{matching_final_digits}) and the discussion preceding it that
\[\begin{pmatrix}d_{m-2}d_{m-1}d_m\\ \overline{e_{m-2}e_{m-1}e_m}\end{pmatrix}\in\left\{\begin{pmatrix}001\\ \overline{010}\end{pmatrix},\begin{pmatrix}010\\ \overline{001}\end{pmatrix}\right\}=\left\{\begin{pmatrix}\mathbf w_01\\ \overline{\mathbf w_20}\end{pmatrix},\begin{pmatrix}\mathbf w_20\\ \overline{\mathbf w_01}\end{pmatrix}\right\},\]
and $S_\alpha^{m-3}(1)-S_\alpha^{m-3}(1-\alpha)=\alpha/\beta$.  The remaining digits
\[\begin{pmatrix}d_2d_3\cdots d_{m-3}\\ \overline{e_2e_3\cdots e_{m-3}}\end{pmatrix}\]
are thus determined by edge labels of cycles in the graph of Figure \ref{differences_graph} beginning and ending at vertex $\alpha/\beta$.  There are three possible cycles, whose edge labels give
\[\begin{pmatrix}d_{j}d_{j+1}\\ \overline{e_{j}e_{j+1}}\end{pmatrix}=\begin{pmatrix}01\\ \overline{00}\end{pmatrix}=\begin{pmatrix}\mathbf w_2\\ \overline{\mathbf w_0}\end{pmatrix},\
\begin{pmatrix}d_{j}d_{j+1}\\ \overline{e_{j}e_{j+1}}\end{pmatrix}=\begin{pmatrix}00\\ \overline{01}\end{pmatrix}=\begin{pmatrix}\mathbf w_0\\ \overline{\mathbf w_2}\end{pmatrix},\ \text{and}\
\begin{pmatrix}d_jd_{j+1}d_{j+2}\\ \overline{e_je_{j+1}e_{j+2}}\end{pmatrix}=\begin{pmatrix}001\\ \overline{001}\end{pmatrix}=\begin{pmatrix}\mathbf w_1\\ \overline{\mathbf w_1}\end{pmatrix}.\]
It follows that $\mathbf d=1\mathbf{w}_{i_1}\mathbf{w}_{i_2}\cdots\mathbf{w}_{i_n}(1-i_n/2)$ and $e_1\cdots e_m=\overline{0\mathbf{w}_{2-i_1}\mathbf{w}_{2-i_2}\cdots\mathbf{w}_{2-i_n}(i_n/2)}$ for some $i_1,\dots,i_n\in\{0,1,2\},\ n\ge 1$ and $i_n\neq 1$.  Moreover, note from case (v) of \S\ref{Matching almost everywhere} that $d_1d_2d_3d_4=1010$, so $i_1=2$.  Thus $\mathbf d\in\mathcal{B}$ and $\mathbf e=e_1\cdots e_m=\varphi(\mathbf d)$.  From Lemma \ref{lexicographical_ordering}, the facts that $S_\alpha^j(1),\ S_\alpha^j(1-\alpha)\in[-1,1]$ for each $j\ge 0$ imply that $\sigma^j(\mathbf d),\sigma^j(\overline{\mathbf e})\preceq\mathbf d$ for each $j\ge 0$.  Thus $\mathbf d\in\mathcal{M}$.
\end{proof}

The previous result states that every matching word belongs to $\mathcal{M}$.  Before proving the converse (Propositions \ref{I_delta_nonempty_etc} and \ref{I_d_determines_expansions}), we define and investigate properties of the \textit{valuation function} $v:\mathcal{S}\to\mathbb{R}$ given by the (absolutely) convergent series
\[v((w_j)_{j\ge 1}):=\sum_{j\ge 1}w_j/\beta^j,\]
where $\mathcal{S}\subset \mathbb{Z}^\mathbb{N}$ consists of all sequences $(w_j)_{j\ge 1}$ whose entries are bounded above and below.  The valuation function is also defined on the set $\mathcal{S}^*\subset \mathcal{S}$ of finite words by considering the corresponding finite sum and setting $v(\varepsilon)=0$ for the empty word $\varepsilon$.  It is not difficult to check for finite words $\mathbf w,\mathbf w'\in\{0,\pm 1\}^*$ with no consecutive nonzero digits that $\mathbf w\prec \mathbf w'$ if and only if $v(\mathbf w)<v(\mathbf w')$. 

\begin{lem}\label{valuation_of_blocks}
If $\mathbf w:=w_1w_2\cdots w_k\in\{0,1,2\}^*$ is $\varepsilon$ (in which case we set $k=0$) or consists solely of blocks of $01$'s and $002$'s, then
\[v(\mathbf w)=1/\beta-1/\beta^{k+1}.\]
\end{lem}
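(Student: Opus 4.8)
The plan is to prove the identity by induction on the number of blocks making up $\mathbf{w}$, using two ingredients prepared in advance: a single-block base computation and the elementary ``shift'' property of the valuation $v$. Throughout I will only need the defining relation $\beta^2=\beta+1$ and its consequence $\beta^3=\beta^2+\beta=2\beta+1$.

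First I would record the golden-mean identities that do all the arithmetic work, namely
\[
\tfrac{1}{\beta^2}+\tfrac{1}{\beta^3}=\tfrac{\beta+1}{\beta^3}=\tfrac{1}{\beta}\qquad\text{and}\qquad \tfrac{2}{\beta^3}+\tfrac{1}{\beta^4}=\tfrac{2\beta+1}{\beta^4}=\tfrac{\beta^3}{\beta^4}=\tfrac{1}{\beta}.
\]
These are exactly what is needed to verify the claim for a \emph{single} block: writing $\ell:=|B|$ for the length of a block $B$, a direct computation gives $v(01)=1/\beta^2=1/\beta-1/\beta^{3}$ and $v(002)=2/\beta^3=1/\beta-1/\beta^{4}$, so that both admissible blocks satisfy $v(B)=1/\beta-1/\beta^{\ell+1}$. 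Next I would isolate the shift property of $v$: if $\mathbf{w}=B\mathbf{w}''$ is the concatenation of a block $B$ of length $\ell$ with a (possibly empty) word $\mathbf{w}''$, then reindexing the defining sum yields
\[
v(\mathbf{w})=v(B)+\beta^{-\ell}\,v(\mathbf{w}'').
\]

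With these ingredients the induction on the number $n$ of blocks is immediate. The base case $n=0$ is the empty word, where $k=0$ and $v(\varepsilon)=0=1/\beta-1/\beta$. For the inductive step I write $\mathbf{w}=B\mathbf{w}''$ with $B$ the first block (of length $\ell$) and $\mathbf{w}''$ the remaining $n-1$ blocks (of total length $k-\ell$). Applying the shift property, then the single-block value of $v(B)$ and the induction hypothesis $v(\mathbf{w}'')=1/\beta-1/\beta^{(k-\ell)+1}$, I obtain
\[
v(\mathbf{w})=\left(\tfrac{1}{\beta}-\tfrac{1}{\beta^{\ell+1}}\right)+\beta^{-\ell}\left(\tfrac{1}{\beta}-\tfrac{1}{\beta^{k-\ell+1}}\right)=\tfrac{1}{\beta}-\tfrac{1}{\beta^{\ell+1}}+\tfrac{1}{\beta^{\ell+1}}-\tfrac{1}{\beta^{k+1}}=\tfrac{1}{\beta}-\tfrac{1}{\beta^{k+1}},
\]
where the two middle terms cancel.

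There is no genuine obstacle in this argument: the entire content is the telescoping cancellation of the $\pm 1/\beta^{\ell+1}$ terms, which is precisely what causes the block length $\ell$ (whether $2$ or $3$) to disappear from the final expression, leaving a formula that depends only on the total length $k$. The only points requiring minor care are the reindexing in the shift property and the two golden-mean identities above; both are routine consequences of $\beta^2=\beta+1$.
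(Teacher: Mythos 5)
Your proof is correct, and it takes a genuinely different route from the paper's. The paper first normalises the word using the exchange identities $v((01)^3)=v((002)^2)$ and $v(01002)=v(00201)$ to reduce to one of three canonical forms $(002)^{k/3}$, $(002)^{(k-4)/3}(01)^2$ or $(002)^{(k-2)/3}01$ according to $k \bmod 3$, and then evaluates the resulting geometric series case by case. You instead observe that each individual block $B$ of length $\ell\in\{2,3\}$ already satisfies $v(B)=1/\beta-1/\beta^{\ell+1}$, and that the shift property $v(B\mathbf w'')=v(B)+\beta^{-\ell}v(\mathbf w'')$ then makes the contributions telescope under induction on the number of blocks. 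Your argument is shorter, avoids the case split on $k\bmod 3$ entirely, and makes transparent \emph{why} the answer depends only on the total length $k$: the block length $\ell$ cancels in the telescoping. The paper's computation buys nothing extra here beyond recording the two exchange identities, which are not reused elsewhere; your inductive argument would be a clean drop-in replacement. All the arithmetic you invoke ($1/\beta^2+1/\beta^3=1/\beta$ and $2/\beta^3+1/\beta^4=1/\beta$) checks out against $\beta^2=\beta+1$.
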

\begin{proof}
The case that $\mathbf w=\varepsilon$ is trivial, so suppose $\mathbf w\neq\varepsilon$.  One easily verifies that
\[v((01)^3)=v((002)^2)\ \ \ \ \ \text{and}\ \ \ \ \ v(01002)=v(00201).\]
These observations, together with the fact that for each $1\le j\le k$,
\[v(\mathbf w)=v(w_1\cdots w_j)+(1/\beta^j)v(w_{j+1}\cdots w_k),\]
imply that
\[v(\mathbf w)=\begin{cases}
v((002)^{k/3}), &k\equiv 0\ (\text{mod}\ 3)\\
v((002)^{(k-4)/3}(01)^2), &k\equiv 1\ (\text{mod}\ 3)\\
v((002)^{(k-2)/3}01), &k\equiv 2\ (\text{mod}\ 3)\\
\end{cases}.\]
Notice that for any $j\ge 1$, 
\begin{align*}
v((002)^j)&=2\sum_{i=1}^j(1/\beta^3)^i\\
&=2\cdot\frac{1/\beta^3-1/\beta^{3j+3}}{1-1/\beta^3}\\
&=2\cdot\frac{1-1/\beta^{3j}}{\beta^3-1}\\
&=2\cdot\frac{1-1/\beta^{3j}}{2\beta}\\
&=1/\beta-1/\beta^{3j+1}.
\end{align*}
If $k\equiv0\ (\text{mod}\ 3)$, setting $j=k/3$ gives the result.  If $k\equiv1\ (\text{mod}\ 3)$, we compute
\begin{align*}
v(\mathbf w)&=v((002)^{(k-4)/3}(01)^2)\\
&=v((002)^{(k-4)/3})+(1/\beta^{k-4})v((01)^2)\\
&=1/\beta-1/\beta^{k-3}+(1/\beta^{k-4})(1/\beta^2+1/\beta^4)\\
&=1/\beta-1/\beta^{k-3}+1/\beta^{k-2}+1/\beta^{k}\\
&=1/\beta-1/\beta^{k+1}.
\end{align*}
Similarly, if $k\equiv2\ (\text{mod}\ 3)$,
\begin{align*}
v(\mathbf w)&=v((002)^{(k-2)/3}01)\\
&=v((002)^{(k-2)/3})+(1/\beta^{k-2})v(01)\\
&=1/\beta-1/\beta^{k-1}+1/\beta^{k}\\
&=1/\beta-1/\beta^{k+1}.
\end{align*}
\end{proof}

For equal-length words $\mathbf x,\mathbf y\in\{0,\pm 1\}^*$, define $\mathbf x+\mathbf y,\mathbf x-\mathbf y\in\{0,\pm 1,\pm 2\}^*$ where addition and subtraction, respectively, are performed entry-wise.  Note that
\[\mathbf w_2-\overline{\mathbf w_0}=01,\ \ \ \mathbf w_0-\overline{\mathbf w_2}=01,\ \ \ \text{and}\ \ \ \mathbf w_1-\overline{\mathbf w_1}=002.\]
Suppose $\mathbf d$ satisfies Property $M$ with $m:=\text{len}(\mathbf d)$.  Since $\mathbf d$ is in admissible block form, the definition of $\mathbf e:=\varphi(\mathbf d)$ implies that $\mathbf d-\mathbf e=1\mathbf w 1$ for some word $\mathbf w$ consisting solely of blocks of $01$'s and $002$'s or $\mathbf w=\varepsilon$.  Using Lemma \ref{valuation_of_blocks}, we compute
\[v(\mathbf d-\mathbf e)=v(1\mathbf w1)=1/\beta+(1/\beta)(1/\beta-1/\beta^{m-1})+1/\beta^m=1.\]
This proves the following:

\begin{prop}\label{property_M_difference}
If $\mathbf d\in\mathcal{M}$ and $\mathbf e:=\varphi(\mathbf d)$, then
\[v(\mathbf d)-v(\mathbf e)=v(\mathbf d-\mathbf e)=1.\]
\end{prop}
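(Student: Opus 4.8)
The plan is to deduce both equalities from the linearity of $v$ together with the explicit block structure coming from Property $M$. The second equality $v(\mathbf d)-v(\mathbf e)=v(\mathbf d-\mathbf e)$ is essentially formal: since $\mathbf w_i$ and $\mathbf w_{2-i}$ always have the same length, the map $\varphi$ preserves length, so $\mathbf d$ and $\mathbf e=\varphi(\mathbf d)$ are equal-length words and their entrywise difference $\mathbf d-\mathbf e$ is defined; as $v(\mathbf x)=\sum_j x_j/\beta^j$ depends linearly on the entries, $v(\mathbf d)-v(\mathbf e)=v(\mathbf d-\mathbf e)$ follows at once. Thus everything reduces to evaluating $v(\mathbf d-\mathbf e)$.

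First I would put $\mathbf d-\mathbf e$ into a normal form. Writing $\mathbf d=1\mathbf w_{i_1}\cdots\mathbf w_{i_n}(1-i_n/2)$ with $\mathbf e=\overline{0\mathbf w_{2-i_1}\cdots\mathbf w_{2-i_n}(i_n/2)}$ (and treating the exceptional word $\mathbf d=10$ separately, where $\mathbf e=\overline{01}$ gives $\mathbf d-\mathbf e=11$), I would subtract digit-by-digit and block-by-block. The leading digit yields $1-0=1$ and the trailing digit yields $(1-i_n/2)-\overline{i_n/2}=1$, while each interior block equals $\mathbf w_{i_j}-\overline{\mathbf w_{2-i_j}}$, which by the three identities $\mathbf w_2-\overline{\mathbf w_0}=\mathbf w_0-\overline{\mathbf w_2}=01$ and $\mathbf w_1-\overline{\mathbf w_1}=002$ is either $01$ or $002$. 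Hence $\mathbf d-\mathbf e=1\mathbf w1$ for a word $\mathbf w$ that is a concatenation of blocks $01$ and $002$ (and $\mathbf w=\varepsilon$ in the exceptional case).

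With this shape established, the valuation is a one-line computation via Lemma \ref{valuation_of_blocks}. Setting $m:=\mathrm{len}(\mathbf d)$, the interior word $\mathbf w$ has length $m-2$, so the lemma gives $v(\mathbf w)=1/\beta-1/\beta^{m-1}$. Splitting $v(1\mathbf w1)$ by position --- the leading $1$ contributes $1/\beta$, the block $\mathbf w$ occupies positions $2$ through $m-1$ and contributes $(1/\beta)\,v(\mathbf w)$, and the trailing $1$ contributes $1/\beta^m$ --- I obtain
\[
v(\mathbf d-\mathbf e)=\frac{1}{\beta}+\frac{1}{\beta}\left(\frac{1}{\beta}-\frac{1}{\beta^{m-1}}\right)+\frac{1}{\beta^m}=\frac{1}{\beta}+\frac{1}{\beta^2}=1,
\]
where the final step is the defining identity $1=1/\beta+1/\beta^2$ of the golden mean.

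The only genuinely delicate point is the bookkeeping of the second paragraph: one has to check that the two endpoint digits and every interior block of $\mathbf d-\mathbf e$ really do collapse into the claimed form $1\mathbf w1$ with $\mathbf w$ built only from $01$ and $002$. This rests entirely on the precise definition of $\varphi$ --- in particular the global negation $\overline{\,\cdot\,}$ and the asymmetric treatment of the final block's trailing digit ($1-i_n/2$ in $\mathbf d$ versus $i_n/2$ in $\varphi(\mathbf d)$) --- and on the fact that $\mathbf d$ is in admissible block form so that these blocks line up. Once that combinatorial matching is confirmed, the remaining evaluation is the routine telescoping already packaged into Lemma \ref{valuation_of_blocks}.
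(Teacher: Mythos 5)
Your proposal is correct and follows essentially the same route as the paper: reduce to $v(\mathbf d-\mathbf e)$ by linearity, show $\mathbf d-\mathbf e=1\mathbf w1$ with $\mathbf w$ a concatenation of $01$'s and $002$'s (or $\varepsilon$) using the block identities $\mathbf w_2-\overline{\mathbf w_0}=\mathbf w_0-\overline{\mathbf w_2}=01$ and $\mathbf w_1-\overline{\mathbf w_1}=002$, and then apply Lemma \ref{valuation_of_blocks} to get $v(1\mathbf w1)=1/\beta+(1/\beta)(1/\beta-1/\beta^{m-1})+1/\beta^m=1$. The endpoint bookkeeping you flag as delicate checks out exactly as you describe, and your explicit treatment of the exceptional word $\mathbf d=10$ is a minor refinement the paper subsumes under the $\mathbf w=\varepsilon$ case.
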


For $\mathbf d=10$, set $I_{\mathbf d}=(\alpha_{\mathbf d}^-,\alpha_{\mathbf d}^+]:=(1+1/\beta^2,\beta]$, and for all other $\mathbf d=d_1\cdots d_m\in\mathcal{M}$, define
\begin{equation}\label{matching_interval}
I_{\mathbf d}=(\alpha_{\mathbf d}^-,\alpha_{\mathbf d}^+):=\left(\frac{\beta^m+\beta^{d_m}}{\beta^mv(\mathbf d)+\beta^{d_m}},\frac{\beta^m-\beta^{1-d_m}}{\beta^mv(\mathbf d)-\beta^{1-d_m}}\right).
\end{equation}

\begin{prop}\label{I_delta_nonempty_etc}
For each $\mathbf d\in\mathcal{M}$, $I_{\mathbf d}$ is a nonempty subinterval of $(1,\beta]$.
\end{prop}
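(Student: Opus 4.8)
The plan is to verify the three required properties---$\alpha_{\mathbf d}^-<\alpha_{\mathbf d}^+$, $\alpha_{\mathbf d}^->1$ and $\alpha_{\mathbf d}^+\le\beta$---directly from the explicit formula (\ref{matching_interval}). The word $\mathbf d=10$ is immediate, since $I_{10}=(1+1/\beta^2,\beta]$, so I would fix $\mathbf d=d_1\cdots d_m\in\mathcal M\setminus\{10\}$ (for which necessarily $m\ge 4$) and abbreviate $P:=\beta^m$, $V:=v(\mathbf d)$, $a:=\beta^{d_m}$ and $b:=\beta^{1-d_m}$, noting $ab=\beta$ and that both endpoints are values of a single M\"obius map $g(t):=\tfrac{P-t}{PV-t}$, namely $\alpha_{\mathbf d}^-=g(-a)$ and $\alpha_{\mathbf d}^+=g(b)$. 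The one structural input I would isolate first is the inequality $V=v(\mathbf d)<1$: since every word in admissible block form lies in $\{0,1\}^*$, begins with $1$, and contains no two consecutive nonzero digits, padding $\mathbf d$ with zeros to even length $2k\ge m$ and comparing with the lexicographically maximal such word $(10)^k$ gives, via the valuation--lexicographic correspondence, $v(\mathbf d)\le v((10)^k)=1-\beta^{-2k}<1$.

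With $V<1$ in hand, the rest is bookkeeping. First I would record positivity of the relevant denominators: $PV+a>0$ is clear, while $PV=\sum_{j=1}^m d_j\beta^{m-j}\ge\beta^{m-1}\ge\beta^3>\beta\ge b$ (using $d_1=1$, $d_j\ge 0$ and $m\ge 4$) shows $PV-b>0$, so $g$ has no pole on $[-a,b]$. A short computation gives $g'(t)=P(1-V)/(PV-t)^2>0$, so $g$ is strictly increasing there; since $-a<0<b$, this yields $\alpha_{\mathbf d}^-=g(-a)<g(b)=\alpha_{\mathbf d}^+$, proving nonemptiness. The left-endpoint bound is equally direct, as $g(-a)>1$ unwinds to $P+a>PV+a$, i.e. exactly $V<1$.

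For the upper bound $\alpha_{\mathbf d}^+=g(b)\le\beta$, clearing the (positive) denominator and using $\beta-1=1/\beta$ reduces the claim to $\sum_{j=2}^m d_j\beta^{m+1-j}=\beta^m(\beta V-1)\ge\beta^{-d_m}$, and here I would split on the final digit using the block structure of $\mathbf d$. If $d_m=1$, the $j=m$ term alone is $\beta\ge\beta^{-1}$; if $d_m=0$, then $\mathbf d$ ends in $\mathbf w_2 0=010$, so $d_{m-1}=1$ and the $j=m-1$ term alone is $\beta^2\ge 1=\beta^{-d_m}$. In either case the inequality holds, all omitted terms being nonnegative, which completes the proof that $I_{\mathbf d}\subseteq(1,\beta]$.

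I expect the main obstacle to be not any single computation but pinning down the two facts that feed the algebra: the bound $v(\mathbf d)<1$, which rests on the no-consecutive-nonzeros shape of admissible block words together with the identity $v((10)^\infty)=1$, and the precise form of the last two digits of $\mathbf d$, which is exactly what makes the $d_m=0$ case of the upper bound work. Both are consequences of the definition of $\mathcal B$, so once they are stated cleanly the estimates fall out; note that Proposition \ref{property_M_difference} plays no role here, although it is what later identifies these $I_{\mathbf d}$ as the genuine matching intervals.
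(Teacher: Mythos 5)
Your proof is correct and follows essentially the same route as the paper: both endpoints' bounds reduce to $v(\mathbf d)<1$ (obtained from the no-consecutive-ones structure and $v((10)^\infty)=1$), and the upper bound $\alpha_{\mathbf d}^+\le\beta$ uses exactly the same digit information $d_1=1$ and, when $d_m=0$, $d_{m-1}=1$. Your M\"obius-map monotonicity argument for $\alpha_{\mathbf d}^-<\alpha_{\mathbf d}^+$ is just a repackaging of the paper's cross-multiplication (both boil down to $1-v(\mathbf d)>0$), and your explicit check that $\beta^m v(\mathbf d)-\beta^{1-d_m}>0$ is a small point the paper leaves implicit.
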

\begin{proof}
The result is true for $\mathbf d=10$, so assume $\mathbf d\neq 10$.  We first show that $I_{\mathbf d}\neq\varnothing$, i.e. that
\[\frac{\beta^m+\beta^{d_m}}{\beta^mv(\mathbf d)+\beta^{d_m}}<\frac{\beta^m-\beta^{1-d_m}}{\beta^mv(\mathbf d)-\beta^{1-d_m}},\]
or
\[(\beta^m+\beta^{d_m})(\beta^mv(\mathbf d)-\beta^{1-d_m})<(\beta^m-\beta^{1-d_m})(\beta^mv(\mathbf d)+\beta^{d_m}).\]
Distributing and cancelling terms gives that this is equivalent to
\[\beta^{m+d_m}v(\mathbf d)-\beta^{m+1-d_m}<\beta^{m+d_m}-\beta^{m+1-d_m}v(\mathbf d),\]
or $v(\mathbf d)<1$.  Since $\mathbf d$ has no consecutive $1$'s, one finds that $v(\mathbf d)<v((10)^\infty)=1$ (see also Lemma 1 of \cite{parry_60}). 

Next we show that $I_{\mathbf d}\subset(1,\beta]$.  The left endpoint of $I_{\mathbf d}$ is greater than $1$ again since $v(\mathbf d)<1$.  It remains to show that 
\[\frac{\beta^m-\beta^{1-d_m}}{\beta^mv(\mathbf d)-\beta^{1-d_m}}\le \beta.\]
Recall that $d_1=1$, and if $d_m=0$, then $d_{m-1}=1$; thus $v(\mathbf d)\ge 1/\beta+\beta^{1-d_m}/\beta^m$, and
\[\beta^{m+1}v(\mathbf d)-\beta^{2-d_m}\ge \beta^{m+1}(1/\beta+\beta^{1-d_m}/\beta^m)-\beta^{2-d_m}>\beta^m-\beta^{1-d_m}.\]
Dividing both sides by $\beta^mv(\mathbf d)-\beta^{1-d_m}$ gives the desired inequality.
\end{proof}

For each $\mathbf u\in\{0,1\}^*$, let $\Delta(\mathbf u)$ denote the cylinder of points $x\in[0,1]$ for which the $\beta$-expansion of $x$ begins with $\mathbf u$.  One finds for each $\mathbf u=u_1\cdots u_n$ with $u_ju_{j+1}=0,\ 1\le j<n$, that
\begin{equation}\label{cylinder_interval}
\Delta(\mathbf u)=\begin{cases}
[v(\mathbf u),v(\mathbf u)+1/\beta^n), & u_n=0\\
[v(\mathbf u),v(\mathbf u)+1/\beta^{n+1}), & u_n=1\\
\end{cases}.
\end{equation}

The following lemma is needed in Proposition \ref{I_d_determines_expansions} below.  

\begin{lem}\label{reciprocal_endpt_lem}
Let $\mathbf d\in\mathcal{M}_U$.  Then $B^j(1/\alpha_{\mathbf d}^-)\le 1/\alpha_{\mathbf d}^-$ and $B^j(1-1/\alpha_{\mathbf d}^+)\le 1/\alpha_{\mathbf d}^+$ for all $j>0$.
\end{lem}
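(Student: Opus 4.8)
The plan is to translate both inequalities into statements about the $\beta$-expansions of $1/\alpha_{\mathbf d}^-$ and $1-1/\alpha_{\mathbf d}^+$ and to read them off from the two points' orbits under $B$. I would first record the clean reformulation coming from Lemma \ref{S_alpha_and_B}(i): for $x\in\{1,\alpha-1\}$ and every $j\le\ell_\alpha(x)$ one has $B^j(x/\alpha)=S_\alpha^j(x)/\alpha\le 1/\alpha$, since $S_\alpha^j(x)\in[-1,1]$. Applying this with $\alpha=\alpha_{\mathbf d}^-,\ x=1$ and with $\alpha=\alpha_{\mathbf d}^+,\ x=\alpha_{\mathbf d}^+-1$ (so that $x/\alpha=1-1/\alpha_{\mathbf d}^+$ and $\ell_{\alpha^+}(x)=\ell_{\alpha^+}(1-\alpha^+)$), the two desired inequalities hold for \emph{all} $j$ as soon as $\ell_{\alpha_{\mathbf d}^-}(1)=\infty$ and $\ell_{\alpha_{\mathbf d}^+}(1-\alpha_{\mathbf d}^+)=\infty$. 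By Corollary \ref{B_matching_hole} these are precisely the assertions that the $B$-orbit of $1/\alpha_{\mathbf d}^-$ and of $1-1/\alpha_{\mathbf d}^+$ never enter the open hole $(1/\beta\alpha,1/\beta]$; equivalently, by Lemma \ref{lexicographical_ordering}, that the $\beta$-expansion of $1/\alpha_{\mathbf d}^-$ dominates all of its shifts and that every shift of the $\beta$-expansion of $1-1/\alpha_{\mathbf d}^+$ is $\preceq$ the expansion of $1/\alpha_{\mathbf d}^+$.

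Next I would pin down the two expansions. Using \eqref{cylinder_interval} one checks directly that $1/\alpha_{\mathbf d}^-\in\Delta(\mathbf d)$ and $1-1/\alpha_{\mathbf d}^+\in\Delta(\overline{\varphi(\mathbf d)})$ (both $\mathbf d$ and $\overline{\varphi(\mathbf d)}$ lie in $\{0,1\}^*$ and contain no $11$, hence are admissible $\beta$-words), so the expansions begin with $\mathbf d$ and with $\overline{\varphi(\mathbf d)}$, respectively. Substituting the explicit endpoints from \eqref{matching_interval} into \eqref{B^k_eqn} then yields the self-referential identities
\[B^m\!\left(1/\alpha_{\mathbf d}^-\right)=\beta^{d_m}\!\left(1-1/\alpha_{\mathbf d}^-\right),\qquad B^m\!\left(1-1/\alpha_{\mathbf d}^+\right)=\beta^{\,1-d_m}\!\left(1-1/\alpha_{\mathbf d}^+\right),\]
where $m=\text{len}(\mathbf d)$. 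Since $\alpha_{\mathbf d}^\pm\in(1,\beta]\subset(1,2)$ gives $1/\alpha_{\mathbf d}^->1/2$ and $1-1/\alpha_{\mathbf d}^+\le 1/\beta^2$, the right-hand sides are strictly below $1/\alpha_{\mathbf d}^-$ and at most $1/\alpha_{\mathbf d}^+$, so the step immediately after the block $\mathbf d$ (resp.\ $\overline{\varphi(\mathbf d)}$) already returns the orbit to the ``safe'' side; moreover these identities force both orbits to be eventually periodic and reduce the whole question to controlling the finitely many returns governed by $\mathbf d$ and $\overline{\varphi(\mathbf d)}$.

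The technical heart, and the step I expect to be the main obstacle, is upgrading Property $M$ — which only controls the finite words, giving $\sigma^j(\mathbf d)\preceq\mathbf d$ and $\sigma^j(\overline{\varphi(\mathbf d)})\preceq\mathbf d$ for all $j\ge0$ — to the domination of the full (infinite, eventually periodic) expansions. Most shifts are harmless: any shift landing in one of the zero-runs that separate the returns starts with a $0$ and is trivially dominated, and any shift whose first point of disagreement already occurs within the copy of $\mathbf d$ (or $\overline{\varphi(\mathbf d)}$) is settled by Property $M$ together with the block valuation estimates of Lemma \ref{valuation_of_blocks}. The delicate shifts are those aligned with a border (self-overlap) of $\mathbf d$, where the comparison decides itself only beyond the first $m$ digits and therefore reaches into the tail produced by the self-referential identities above; here I would argue by induction on the shift index, using those identities to rewrite the tail in terms of $1-1/\alpha_{\mathbf d}^-$ (resp.\ $1-1/\alpha_{\mathbf d}^+$) and invoking Property $M$ once more on the following block. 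It is exactly at this point that the exceptional words $10$ and $1001$ must be excluded: for them the period collapses and the uniform block argument degenerates (and \eqref{matching_interval} is in any case not the defining formula for $I_{10}$), so they are instead covered by the explicit computations of \S\ref{Matching almost everywhere}. Finally, the two endpoints are genuinely symmetric — the first identity, with $\beta^{d_m}$, governs the left endpoint and the second, with $\beta^{\,1-d_m}$, the right — so a single blockwise argument, applied to $\mathbf d$ and to $\overline{\varphi(\mathbf d)}$ in turn, disposes of both claims.
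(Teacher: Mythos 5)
Your outline follows the paper's own route: the lemma is deduced there from Lemma \ref{reciprocal_expansions} (the explicit eventually periodic $\beta$-expansions of $1/\alpha_{\mathbf d}^\pm$ and $1-1/\alpha_{\mathbf d}^+$) combined with Lemma \ref{lex_ineq_lemma} (lexicographic domination of all shifts of those periodic words), which are exactly the two steps you describe. Your identities $B^m(1/\alpha_{\mathbf d}^-)=\beta^{d_m}(1-1/\alpha_{\mathbf d}^-)$ and $B^m(1-1/\alpha_{\mathbf d}^+)=\beta^{1-d_m}(1-1/\alpha_{\mathbf d}^+)$ do follow from (\ref{matching_interval}), (\ref{B^k_eqn}) and Proposition \ref{property_M_difference}, but note that only the second is self-referential: the first does not by itself make the orbit of $1/\alpha_{\mathbf d}^-$ eventually periodic — one must still track the next $m-2$ (resp.\ $m-1$) iterates through the cylinder $\Delta(\overline{e_2\cdots e_{m-2}}0)$ to close the loop $B^{2m-2}(1/\alpha_{\mathbf d}^-)=1/\alpha_{\mathbf d}^-$, which is what the computation in Lemma \ref{reciprocal_expansions} does. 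The opening detour through $\ell_\alpha=\infty$ and Corollary \ref{B_matching_hole} is also superfluous (and the asserted equivalence between ``never enters the hole'' and shift-domination fails at the boundary point $1/\beta$): once the expansions are known, Lemma \ref{lexicographical_ordering} converts $B^j(x)\le y$ directly into $\sigma^j$ of the expansion of $x$ being $\preceq$ the expansion of $y$.

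The genuine gap is precisely the step you flag as the expected main obstacle: proving that every shift of $(\mathbf d\,\overline{e_2\cdots e_{m-2}}\,0)^\infty$ is $\preceq$ that word and that every shift of $\overline{\mathbf e}^\infty$ is $\preceq(d_1\cdots d_{m-1}0)^\infty$ (with the $d_m=0$ analogues). Your proposed treatment --- induction on the shift index, rewriting the tail via the self-referential identities and ``invoking Property $M$ once more on the following block'' --- does not dispose of the hard case. When the shift aligns with a self-overlap of $\mathbf d$, i.e.\ $d_{j+1}\cdots d_m=d_1\cdots d_{m-j}$, the comparison is decided by $\overline{e_2\cdots e_{m-2}}0d_1\cdots d_j$ versus $d_{m-j+1}\cdots d_m\overline{e_2\cdots e_{m-2}}0$, and neither word is a shift of $\mathbf d$ or of $\overline{\mathbf e}$, so Property $M$ cannot simply be re-invoked blockwise. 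The paper's Lemma \ref{lex_ineq_lemma} handles this by writing both sides in the blocks $\mathbf w_{i_k}$, assuming the inequality fails, and transporting the forced coincidence of block indices through the involution $i\mapsto 2-i$ to produce a shift of $\overline{\mathbf e}$ that strictly dominates $\mathbf d$, contradicting the second half of Property $M$; this case analysis (together with the cylinder verifications in Lemma \ref{reciprocal_expansions}) is the actual substance of the lemma. As written, your proposal is a correct road map of the paper's argument with its central technical component left unproved.
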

\begin{proof}
This is a corollary of two technical results (Lemmas \ref{reciprocal_expansions} and \ref{lex_ineq_lemma}), whose statements and proofs are provided in the appendix.
\end{proof}

The next result---together with Proposition \ref{I_delta_nonempty_etc}---states that every word $\mathbf d\in\mathcal{M}$ is in fact a matching word, thus completing our classification of matching words as the set $\mathcal{M}$.  Moreover, it states that the interval $I_{\mathbf d}$ is contained in a matching interval corresponding to the matching word $\mathbf d$.

\begin{prop}\label{I_d_determines_expansions}
For any $\mathbf d\in\mathcal{M}$ and $\alpha\in I_{\mathbf d}$, the $S_\alpha$-expansions of $1$ and $1-\alpha$ begin with $\mathbf d$ and $\varphi(\mathbf d)$, respectively.  Moreover, $S_\alpha$ has matching with matching index $m(\alpha)=\text{len}(\mathbf d)$.
\end{prop}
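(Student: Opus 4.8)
The plan is to first reduce the matching claim to establishing the two itineraries, and then to verify those itineraries by pushing everything through the $\beta$-transformation. The key observation is that once we know the $S_\alpha$-orbit of $1$ realises the itinerary $\mathbf d=d_1\cdots d_m$ (that is, $S_\alpha^{j-1}(1)\in J_{d_j}$ for $1\le j\le m$) and the orbit of $1-\alpha$ realises $\mathbf e:=\varphi(\mathbf d)$, then matching at step exactly $m$ comes for free: by (\ref{S_alpha^k_eqn}),
\[
S_\alpha^m(1)-S_\alpha^m(1-\alpha)=\beta^m\bigl(1-\alpha v(\mathbf d)\bigr)-\beta^m\bigl((1-\alpha)-\alpha v(\mathbf e)\bigr)=\beta^m\alpha\bigl(1-v(\mathbf d)+v(\mathbf e)\bigr)=0,
\]
where the last equality is Proposition \ref{property_M_difference}. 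So the entire content of the proposition is (a) that both itineraries hold throughout $I_{\mathbf d}$, and (b) that no matching occurs before step $m$. The exceptional words $\mathbf d\in\{10,1001\}$ are already settled by cases (i) and (iii) of \S\ref{Matching almost everywhere}, so I would immediately restrict to $\mathbf d\in\mathcal M_U$.

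Next I would translate the itinerary conditions into the language of the $\beta$-transformation. By Lemma \ref{S_alpha_and_B}, up to the step $\ell_\alpha$ at which an orbit first enters the ``hole'', the $S_\alpha$-orbit of $1$ (resp.\ of $\alpha-1$) is a rescaling of the $B$-orbit of $1/\alpha$ (resp.\ of $1-1/\alpha$), with matching digits. The common matching value $S_\alpha^m(1)=\beta^m(1-\alpha v(\mathbf d))$ vanishes precisely at $\alpha_\ast:=1/v(\mathbf d)$, which splits $I_{\mathbf d}$ into two halves on which, by the symmetry of $S_\alpha$, the hole is entered by the orbit of $1$ and of $1-\alpha$ respectively. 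On, say, the half $\alpha\ge\alpha_\ast$ in the representative case $d_m=0$, I would require $1/\alpha\in\Delta(d_1\cdots d_{m-2})$ together with $B^{m-2}(1/\alpha)\in(1/\beta\alpha,1/\beta]$; using (\ref{B^k_eqn}), (\ref{cylinder_interval}) and the relation between $v(\mathbf d)$ and $v(d_1\cdots d_{m-2})$, one checks that this forces $\alpha\in[\alpha_\ast,\alpha_{\mathbf d}^+)$, the right endpoint $\alpha_{\mathbf d}^+$ of (\ref{matching_interval}) being exactly the value at which $B^{m-2}(1/\alpha)$ meets the lower edge $1/\beta\alpha$ of the hole. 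The symmetric computation on the half $\alpha\le\alpha_\ast$ recovers $(\alpha_{\mathbf d}^-,\alpha_\ast]$. The final one or two digits past $\ell_\alpha$, where the $\beta$- and $S_\alpha$-expansions diverge, are then pinned down directly from the final-digit analysis (\ref{matching_final_digits}) and Lemma \ref{ell(alpha)} (which also identifies whether $m=\ell_\alpha+1$ or $\ell_\alpha+2$).

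For part (b), no early matching, I would use that on each half the prefix $d_1\cdots d_{m-2}$ of the $\beta$-expansion of $1/\alpha$ is constant, so by Corollary \ref{B_matching_hole} the only way matching could occur earlier is for some orbit point $B^j(1/\alpha)$ with $j<m-2$ and digit $0$ to land in $(1/\beta\alpha,1/\beta]$. Property $M$, via Lemma \ref{lexicographical_ordering} (which converts $\sigma^j(\mathbf d)\preceq\mathbf d$ into $S_\alpha^j(1)\le 1$), bounds how large such orbit points can be, while Lemma \ref{reciprocal_endpt_lem} pins the $\beta$-orbits of $1/\alpha_{\mathbf d}^-$ and $1-1/\alpha_{\mathbf d}^+$ below the respective reciprocal endpoints; monotonicity of $\beta$-expansions in $\alpha$ (again Lemma \ref{lexicographical_ordering}) then propagates the ``stay out of the hole'' condition from the endpoints across all of $I_{\mathbf d}$.

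The main obstacle I expect is precisely the bookkeeping at the matching step. The $\beta$-transformation ``sees'' the hole as a digit-$0$ gap and continues $B^{\ell_\alpha}(1/\alpha)$ into a digit-$0$ tail, whereas $S_\alpha$ carries the orbit up into $J_1$ and then across $0$, so the two expansions diverge exactly at step $\ell_\alpha$, and one must reconcile these two pictures while simultaneously ruling out premature entry into the hole over the whole interval. Managing the symmetry-induced case split (which orbit enters the hole, and the corresponding value of $m-\ell_\alpha$) together with the precise deployment of Lemma \ref{reciprocal_endpt_lem} and Property $M$ is the delicate part; by contrast the endpoint verifications themselves are routine algebra with the valuation function $v$.
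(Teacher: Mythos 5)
Your proposal follows essentially the same route as the paper's proof: reduce to $\mathbf d\in\mathcal M_U$, transfer both orbits to the $\beta$-transformation via Lemma \ref{S_alpha_and_B}, split $I_{\mathbf d}$ at $1/v(\mathbf d)$ according to which orbit enters the hole, verify the cylinder inclusions and hole conditions at the endpoints $\alpha_{\mathbf d}^\pm$, and rule out premature entry using Lemma \ref{reciprocal_endpt_lem} together with monotonicity of $\beta$-expansions. The only (harmless) variation is that you conclude $m(\alpha)=\mathrm{len}(\mathbf d)$ by computing $S_\alpha^m(1)-S_\alpha^m(1-\alpha)=\beta^m\alpha(1-v(\mathbf d)+v(\mathbf e))=0$ via Proposition \ref{property_M_difference}, where the paper instead reads it off from the final three digits via Equation (\ref{matching_final_digits}).
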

\begin{proof}
The result is shown for exceptional words $\mathbf d\in\{10,1001\}$ in \S\ref{Matching almost everywhere}, so assume $\mathbf d\in\mathcal{M}_U$.  Suppose the first statement holds.  That $S_\alpha$ has matching with index $m(\alpha)=\text{len}(\mathbf d)$ is implied by the final three digits of $\mathbf d$ and $\mathbf e$ (see the discussion surrounding Equation (\ref{matching_final_digits})), so we need only prove the first statement.
Let $\alpha\in I_{\mathbf d}$, and write $\mathbf d=d_1\cdots d_m$ and $\mathbf e:=\varphi(\mathbf d)=e_1\cdots e_m$.  We must show that 
\[d_{\alpha,1}\cdots d_{\alpha,m}=d_1\cdots d_m\]
and
\[e_{\alpha,1}\cdots e_{\alpha,m}=e_1\cdots e_m.\]
Assume that 
\[\begin{pmatrix} d_{m-2}d_{m-1}d_m\\ e_{m-2}e_{m-1}e_m\end{pmatrix}=\begin{pmatrix}001\\\overline{010}\end{pmatrix},\]
and set $\alpha_0:=1/v(\mathbf d)$ (the case that $d_m=0$ is similar).
% see pp.162--169 of Goodnotes 'words_and_alpha_intervals_scratch.pdf' for proof
Proposition \ref{I_delta_nonempty_etc} together with the fact that $v(\mathbf d)<1$ imply $\alpha^-<\alpha_0<\alpha^+$, where, for ease of notation, $\alpha^\pm:=\alpha_{\mathbf d}^\pm$.  We claim that it suffices to show the following:
\begin{enumerate}
\item[(i)] if $\alpha\in (\alpha^-,\alpha_0),$ then $\ell_\alpha(1)>m-1,\ \ell_\alpha(1-\alpha)=m-2$,
\[b_1(1/\alpha)\cdots b_m(1/\alpha)=d_1\cdots d_{m},\]
and
\[b_1(1-1/\alpha)\cdots b_{m-2}(1-1/\alpha)=\overline{e_1\cdots e_{m-2}};\]

\item[(ii)] if $\alpha\in (\alpha_0,\alpha^+),$ then $\ell_\alpha(1)=m-1,\ \ell_\alpha(1-\alpha)>m-2$,
\[b_1(1/\alpha)\cdots b_{m-1}(1/\alpha)=d_1\cdots d_{m-1},\]
and
\[b_1(1-1/\alpha)\cdots b_{m}(1-1/\alpha)=\overline{e_1\cdots e_m};\]
and

\item[(iii)] if $\alpha=\alpha_0$, then $\ell_\alpha(1)=m-1,\ \ell_\alpha(1-\alpha)=m-2$,
\[b_1(1/\alpha)\cdots b_{m-1}(1/\alpha)=d_1\cdots d_{m-1},\]
\[b_1(1-1/\alpha)\cdots b_{m-2}(1-1/\alpha)=\overline{e_1\cdots e_{m-2}},\]
and $B^{m-1}(1/\alpha)=B^{m-2}(1-1/\alpha)=1/\beta$.
\end{enumerate}
Indeed, suppose (i) holds.  Lemma \ref{S_alpha_and_B} implies 
\[d_{\alpha,1}\cdots d_{\alpha,m}=d_1\cdots d_m\]
and 
\[e_{\alpha,1}\cdots e_{\alpha,m-2}=e_1\cdots e_{m-2}.\]
Since $\ell_\alpha(1-\alpha)=m-2$, Corollary \ref{S_alpha_matching_hole} gives $S_\alpha^{m-2}(1-\alpha)\in[-\alpha/\beta,-1/\beta)$, so $e_{\alpha,m-1}=-1$ and $e_{\alpha,m}=0$.  In case (ii), Lemma \ref{S_alpha_and_B} again gives
\[d_{\alpha,1}\cdots d_{\alpha,m-1}=d_1\cdots d_{m-1}\]
and 
\[e_{\alpha,1}\cdots e_{\alpha,m-1}=e_1\cdots e_{m-1}.\]
Moreover, $\ell_\alpha(1)=m-1$ implies $S_\alpha^{m-1}(1)\in(1/\beta,\alpha,\beta]$ and hence $d_{\alpha,m}=1$.  Since $e_{\alpha,{m-1}}=e_{m-1}=-1$, it follows that $e_{\alpha,{m}}=0=e_{m}$.  In (iii), we have 
\[d_{\alpha,1}\cdots d_{\alpha,m-1}=d_1\cdots d_{m-1}\] 
and
\[e_{\alpha,1}\cdots e_{\alpha,m-2}=e_1\cdots e_{m-2}.\]
Moreover, Lemma \ref{S_alpha_and_B} gives $S_\alpha^{m-1}(1)=-S_\alpha^{m-2}(1-\alpha)=\alpha/\beta$, so $d_{\alpha,m}=\overline{e_{\alpha,m-1}}=1$ and $e_{\alpha,m}=0$.

By Corollary \ref{B_matching_hole}, (i), (ii) and (iii) are implied by showing:
\begin{enumerate}
\item[(a)] $1/\overline{I_{\mathbf d}}\subsetneq\Delta(d_1\cdots d_{m-1})$ and $1-1/\overline{I_{\mathbf d}}\subsetneq\Delta(\overline{e_1\cdots e_{m-2}})$;
\item[(b)] $B^j(1/\alpha)\notin(1/\beta\alpha,1/\beta]$ for each $0\le j<m-1$, and $B^j(1-1/\alpha)\notin (1/\beta\alpha,1/\beta]$ for each $0\le j<m-2$;
\item[(c)] if $\alpha\in(\alpha^-,\alpha_0)$, then $B^{m-1}(1/\alpha)>1/\beta$ and $B^{m-2}(1-1/\alpha)\in(1/\beta\alpha,1/\beta]$;
\item[(d)] if $\alpha\in(\alpha_0,\alpha^+)$, then $B^{m-1}(1/\alpha)\in(1/\beta\alpha,1/\beta]$ and $B^{m-2}(1-1/\alpha)>1/\beta$; and
\item[(e)] if $\alpha=\alpha_0$, then $B^{m-1}(1/\alpha)=B^{m-2}(1-1/\alpha)=1/\beta$.
\end{enumerate}
We prove each of (a), (b), (c), (d) and (e):

\begin{enumerate}
\item[(a)]
The first inclusion is equivalent to
\begin{equation}\label{d_cyl_inequalities}
v(d_1\cdots d_{m-1})<1/\alpha^+<1/\alpha^-<v(d_1\cdots d_{m-1})+1/\beta^{m-1}.
\end{equation}
Note that $v(d_1\cdots d_{m-1})<1/\alpha^+$ if and only if
\[v(\mathbf d)-1/\beta^m<\frac{\beta^mv(\mathbf d)-1}{\beta^m-1}.\]
Multiplying both sides by $\beta^m-1$, cancelling and rearranging terms, this is equivalent to $v(\mathbf d)> 1/\beta^m$.  This latter inequality holds since $v(\mathbf d)\ge v(d_1)=1/\beta$ and $m>1$.  Next, $1/\alpha^-<v(d_1\cdots d_{m-1})+1/\beta^{m-1}$ if and only if
\[\frac{\beta^mv(\mathbf d)+\beta}{\beta^m+\beta}<v(\mathbf d)-1/\beta^{m}+1/\beta^{m-1}.\]
Using the fact that $1/\beta^{m-1}=1/\beta^m+1/\beta^{m+1}$ and multiplying both sides by $\beta^m+\beta$, this is equivalent to 
\[\beta^mv(\mathbf d)+\beta<(\beta^m+\beta)(v(\mathbf d)+1/\beta^{m+1}),\]
or
\[\beta^mv(\mathbf d)+\beta<\beta^mv(\mathbf d)+1/\beta+\beta v(\mathbf d)+1/\beta^m.\]
Simplifying, this is equivalent to showing $1<\beta v(\mathbf d)+1/\beta^m$, which again holds since $v(\mathbf d)\ge 1/\beta$. 
Thus $1/\overline{I_{\mathbf d}}\subsetneq\Delta(d_1\cdots d_{m-1})$.

The second inclusion is equivalent to
\[v(\overline{e_1\cdots e_{m-2}})< 1-1/\alpha^-<1-1/\alpha^+<v(\overline{e_1\cdots e_{m-2}})+1/\beta^{m-2}.\]
Now $v(\overline{e_1\cdots e_{m-2}})<1-1/\alpha^-$ if and only if $1/\alpha^-<1-(v(\overline{\mathbf e})-1/\beta^{m-1})$.  By Proposition \ref{property_M_difference}, the fact that $v(\overline{\mathbf e})=-v(\mathbf e)$ and (\ref{d_cyl_inequalities}),
\[1-(v(\overline{\mathbf e})-1/\beta^{m-1})=v(\mathbf d)+1/\beta^{m-1}>v(d_1\cdots d_{m-1})+1/\beta^{m-1}>1/\alpha^-.\]
Lastly, $1-1/\alpha^+<v(\overline{e_1\cdots e_{m-2}})+1/\beta^{m-2}$ if and only if  $1-1/\alpha^+<v(\overline{\mathbf e})-1/\beta^{m-1}+1/\beta^{m-2}$, or $v(\mathbf d)<1/\alpha^++1/\beta^m$.  From (\ref{d_cyl_inequalities}), we find
\[v(\mathbf d)-1/\beta^m=v(d_1\cdots d_{m-1})<1/\alpha^+.\]
Thus $1-1/\overline{I_{\mathbf d}}\subsetneq\Delta(\overline{e_1\cdots e_{m-2}})$.

\item[(b)]  Fix $0\le j<m-1$.  If $d_{j+1}=1$, then part (a) and Lemma \ref{lexicographical_ordering} imply that $B^j(1/\alpha)>B^j(1/\alpha^+)\ge 1/\beta$.  Now suppose $d_{j+1}=0$.  By (a), $B^j(1/\alpha^-)\in(1/\beta\alpha^-,1/\beta]$ if and only if $B^{j+1}(1/\alpha^-)\in(1/\alpha^-,1]$.  Lemma \ref{reciprocal_endpt_lem} thus implies $B^j(1/\alpha^-)\notin(1/\beta\alpha^-,1/\beta]$.  By Equation (\ref{B^k_eqn}), it also holds for each $x\in\Delta(d_1\cdots d_{m-1})$ that $B^j(x)\notin (x/\beta,1/\beta]$ if and only if
\[\beta^j(x-v(d_1\cdots d_j))\le x/\beta,\]
or
\[x\le \frac{\beta^jv(d_1\cdots d_j)}{\beta^j-1/\beta}.\]
Since $1/\alpha,1/\alpha^-\in\Delta(d_1\cdots d_{m-1})$ and $B^j(1/\alpha^-)\notin (1/\beta\alpha^-,1/\beta]$, we have
\[1/\alpha<1/\alpha^-\le \frac{\beta^jv(d_1\cdots d_j)}{\beta^j-1/\beta},\]
which implies $B^j(1/\alpha)\notin(1/\beta\alpha,1/\beta]$.  Thus $B^j(1/\alpha)\notin (1/\beta\alpha,1/\beta]$ for each $0\le j<m-1$.

The proof that $B^j(1-1/\alpha)\notin (1/\beta\alpha,1/\beta]$ for each $0\le j<m-2$ is similar.

\item[(c)]  Suppose $\alpha\in(\alpha^-,\alpha_0)$.  From Equation (\ref{B^k_eqn}) and part (a), we have for each $x\in1/\overline{I_{\mathbf d}}$ that
\begin{flalign}\label{B^{m-1}(x)}
B^{m-1}(x)&=\beta^{m-1}(x-v(d_1\cdots d_{m-1}))\\ \nonumber
&=\beta^{m-1}(x-(v(\mathbf d)-1/\beta^{m}))
\end{flalign}
Since $1/\alpha>1/\alpha_0=v(\mathbf d)$, we have $B^{m-1}(1/\alpha)>1/\beta$.  Also from Equation (\ref{B^k_eqn}), part (a) and Proposition \ref{property_M_difference}, for each $x\in 1/\overline{I_{\mathbf d}}$,
\begin{flalign}\label{B^{m-2}(1-x)}
B^{m-2}(1-x)&=\beta^{m-2}(1-x-v(\overline{e_1\cdots e_{m-2}}))\\ \nonumber
&=\beta^{m-2}(1-x+v(\mathbf e)+1/\beta^{m-1})\\ \nonumber
&=\beta^{m-2}(-x+v(\mathbf d)+1/\beta^{m-1})\\ \nonumber
&=-\beta^{m-2}x+\beta^{m-2}v(\mathbf d)+1/\beta.
\end{flalign}
Hence 
\[B^{m-2}(1-1/\alpha)<B^{m-2}(1-1/\alpha_0)=1/\beta,\]
and $B^{m-2}(1-1/\alpha)>1/\beta\alpha$ if and only if
\[\frac{\beta^{m-2}v(\mathbf d)+1/\beta}{\beta^{m-2}+1/\beta}>1/\alpha.\]
But the left hand side equals $1/\alpha^-$, so the inequality holds.  

\item[(d)] Suppose $\alpha\in(\alpha_0,\alpha^+)$.  From Equation (\ref{B^{m-1}(x)}), $1/\alpha<1/\alpha_0=v(\mathbf d)$ implies $B^{m-1}(1/\alpha)<1/\beta$.  Moreover, $B^{m-1}(1/\alpha)>1/\beta\alpha$ if and only if 
\[1/\alpha>\frac{\beta^{m-1}v(\mathbf d)-1/\beta}{\beta^{m-1}-1/\beta}.\]
The right-hand side equals $1/\alpha^+$, and $\alpha<\alpha^+$ by assumption.  We also find from Equation (\ref{B^{m-2}(1-x)}) that
\[B^{m-2}(1-1/\alpha)=\beta^{m-2}(v(\mathbf d)-1/\alpha)+1/\beta>1/\beta\]
since $1/\alpha<1/\alpha_0=v(\mathbf d)$.

\item[(e)]  This again follows from Equations (\ref{B^{m-1}(x)}) and (\ref{B^{m-2}(1-x)}), setting $x=1/\alpha_0=v(\mathbf d)$.
\end{enumerate}
\end{proof}

The following proposition states that the interval $I_{\mathbf d}$ contains the matching intervals corresponding to the matching word $\mathbf d$; together with Proposition \ref{I_d_determines_expansions}, this characterises matching intervals as the collection $\{I_{\mathbf d}\}_{\mathbf d\in\mathcal{M}}$.

\begin{prop}\label{matching_intervals_thm}
If $S_\alpha$ has matching with $m(\alpha)=m$, then $\alpha\in I_{\mathbf d}$, where $\mathbf d=d_1\cdots d_m$ is beginning of the $S_\alpha$-expansion of $1$.
\end{prop}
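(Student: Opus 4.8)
The plan is to establish the reverse inclusion to Proposition~\ref{I_d_determines_expansions}: there we showed $I_{\mathbf d}$ is contained in the matching interval for $\mathbf d$; now I want to show every matching parameter with matching word $\mathbf d$ lies in $I_{\mathbf d}$. The key observation is that by Proposition~\ref{I_d_determines_expansions} together with Proposition~\ref{I_delta_nonempty_etc}, the collection $\{I_{\mathbf d}\}_{\mathbf d\in\mathcal M}$ consists of nonempty subintervals of $(1,\beta]$, each contained in \emph{some} matching interval, and on each $I_{\mathbf d}$ the matching word is exactly $\mathbf d$. Thus distinct $\mathbf d,\mathbf d'\in\mathcal M$ give disjoint intervals $I_{\mathbf d},I_{\mathbf d'}$, since the matching word uniquely determines $\mathbf d$ (admissible block form has unique indices). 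So the $I_{\mathbf d}$ are pairwise disjoint and their union consists entirely of matching parameters.

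The main step is then a counting-versus-covering argument. Let $\alpha\in(1,\beta]$ be a matching parameter with $m(\alpha)=m$ and matching word $\mathbf d=d_1\cdots d_m$; by Proposition~\ref{matching_implies_property_M}, $\mathbf d\in\mathcal M$. I must show $\alpha\in I_{\mathbf d}$. Suppose not. Since $\mathbf d$ is the genuine matching word at $\alpha$, the first $m$ digits of the $S_\alpha$-expansion of $1$ equal $\mathbf d$, and by Lemma~\ref{S_alpha_and_B} these digits are governed by the $\beta$-expansion of $1/\alpha$ up to step $\ell_\alpha(1)$. The idea is to read off, from the value of $m(\alpha)$ and the matching configuration (\ref{matching_final_digits}), precisely where $B^{m-1}(1/\alpha)$ and $B^{m-2}(1-1/\alpha)$ must lie relative to $1/\beta$, and to compare these constraints with the explicit endpoints $\alpha_{\mathbf d}^\pm$ in (\ref{matching_interval}). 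Concretely, matching at step $m$ with the configuration $\binom{001}{\overline{010}}$ forces $S_\alpha^{m-1}(1)=\alpha/\beta$ or $S_\alpha^{m-2}(1-\alpha)\in[-\alpha/\beta,-1/\beta)$; translating through Corollary~\ref{B_matching_hole} and Equations~(\ref{B^{m-1}(x)}) and (\ref{B^{m-2}(1-x)}), this pins $1/\alpha$ into exactly the reciprocal of $\overline{I_{\mathbf d}}$, yielding $\alpha\in I_{\mathbf d}$.

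An alternative, cleaner route avoids recomputation: because every $I_{\mathbf d}$ consists of matching parameters with word $\mathbf d$, and the $I_{\mathbf d}$ are disjoint, it suffices to show their union exhausts \emph{all} matching parameters. If some matching $\alpha$ with word $\mathbf d$ lay outside $\bigcup_{\mathbf d'}I_{\mathbf d'}$, then in particular $\alpha\notin I_{\mathbf d}$; but the computations of cases (i)--(iii) and (a)--(e) in Proposition~\ref{I_d_determines_expansions} show that membership of $1/\alpha$ in $\Delta(d_1\cdots d_{m-1})$ is equivalent (via the endpoint formulas) to $\alpha\in\overline{I_{\mathbf d}}$, while having matching word exactly $\mathbf d$ forces precisely this cylinder membership. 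Running those equivalences in reverse gives $\alpha\in I_{\mathbf d}$.

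The hard part will be the bookkeeping at the boundary: one must handle separately the two configurations in (\ref{matching_final_digits}) (the cases $d_m=1$ and $d_m=0$) and verify that the strict-versus-nonstrict endpoint conditions match up, so that a matching parameter cannot land exactly on an endpoint $\alpha_{\mathbf d}^-$ or $\alpha_{\mathbf d}^+$ with a \emph{different} word. The exceptional words $10$ and $1001$ must be treated via the explicit cases of \S\ref{Matching almost everywhere}, where $I_{10}=(1+1/\beta^2,\beta]$ is half-open; I expect these to be the only places where the endpoint analysis differs from the generic formula~(\ref{matching_interval}).
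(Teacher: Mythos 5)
Your overall strategy points in the same direction as the paper's proof: use the matching configuration (\ref{matching_final_digits}) to locate a specific orbit point in the matching ``hole'' and translate that into inequalities on $\alpha$ that reproduce the endpoints $\alpha_{\mathbf d}^{\pm}$. But both of the concrete mechanisms you propose for carrying this out have gaps. First, you want to ``translate through Corollary~\ref{B_matching_hole} and Equations~(\ref{B^{m-1}(x)}) and (\ref{B^{m-2}(1-x)})''; those two equations are derived in the proof of Proposition~\ref{I_d_determines_expansions} only for $x\in 1/\overline{I_{\mathbf d}}$, since they rest on part (a) of that proof ($1/\overline{I_{\mathbf d}}\subsetneq\Delta(d_1\cdots d_{m-1})$, etc.). Invoking them for an arbitrary matching parameter $\alpha$ with word $\mathbf d$ presupposes $\alpha\in\overline{I_{\mathbf d}}$, which is essentially what you are trying to prove; at minimum you would have to separately establish that $\ell_\alpha(1)\ge m-1$ (so that Lemma~\ref{S_alpha_and_B} applies through step $m-1$) and that $1/\alpha$ lies in the relevant cylinder, and neither is automatic (Lemma~\ref{ell(alpha)} only guarantees $\ell_\alpha\ge m-2$, and matching may be triggered on the orbit of $1-\alpha$ rather than of $1$). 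The paper sidesteps this entirely by working with Equation~(\ref{S_alpha^k_eqn}), which expresses $S_\alpha^{m-2}(1)$ and $S_\alpha^{m-1}(1-\alpha)$ as explicit affine functions of $\alpha$ with no hypothesis on $\alpha$, and then solving the resulting inequalities directly.

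Second, your ``alternative, cleaner route'' rests on the claim that membership of $1/\alpha$ in $\Delta(d_1\cdots d_{m-1})$ is \emph{equivalent} to $\alpha\in\overline{I_{\mathbf d}}$. This is false: part (a) of the proof of Proposition~\ref{I_d_determines_expansions} gives a \emph{strict} inclusion $1/\overline{I_{\mathbf d}}\subsetneq\Delta(d_1\cdots d_{m-1})$, so the cylinder is strictly larger than the reciprocal of the closed matching interval. The extra information that pins $\alpha$ into $I_{\mathbf d}$ is not the cylinder membership but the location of the orbit point at the matching time --- in the paper's notation, either $S_\alpha^{m-2}(1)\in(1/\beta,\alpha/\beta]$ or $S_\alpha^{m-1}(1-\alpha)\in[-\alpha/\beta,-1/\beta)$ (for $d_m=0$; the other configuration is symmetric). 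Converting each of these two conditions, via (\ref{S_alpha^k_eqn}) and Proposition~\ref{property_M_difference}, into the bounds $\alpha\in[1/v(\mathbf d),\alpha_{\mathbf d}^+)$ and $\alpha\in(\alpha_{\mathbf d}^-,1/v(\mathbf d)]$ respectively, and checking that both ranges sit inside $I_{\mathbf d}$ using $v(\mathbf d)<1$, is the actual content of the proof; your proposal defers exactly this computation, and the shortcuts offered in its place do not close the gap.
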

\begin{proof}
By Proposition \ref{matching_implies_property_M}, $\mathbf d\in\mathcal{M}$, so $I_{\mathbf d}$ is defined.  The result holds for $m\le 2$ by the cases in \S\ref{Matching almost everywhere}, so assume $m>2$ and let $\mathbf e=e_1\cdots e_m$ denote the beginning of the $S_\alpha$-expansion of $1-\alpha$.  Recall from Equation (\ref{matching_final_digits}) that
\[\begin{pmatrix}d_{m-2}d_{m-1}d_m\\ e_{m-2}e_{m-1}e_m\end{pmatrix}\in\left\{\begin{pmatrix}010\\\overline{001}\end{pmatrix},\begin{pmatrix}001\\\overline{010}\end{pmatrix}\right\}.\]
Assume $d_m=0$ (the other case is similar).  Lemma \ref{ell(alpha)}, Corollary \ref{S_alpha_matching_hole} and the final digits of $\mathbf d$ and $\mathbf e$ imply that either 
\[\text{(i)}\ \ S_\alpha^{m-2}(1)\in(1/\beta,\alpha/\beta]\ \ \ \ \text{or}\ \ \ \ \text{(ii)}\ \ S_\alpha^{m-1}(1-\alpha)\in[-\alpha/\beta,-1/\beta).\]
It suffices to show that both (i) and (ii) imply 
\[\alpha\in I_{\mathbf d}=\left(\frac{\beta^m+1}{\beta^mv(\mathbf d)+1},\frac{\beta^m-\beta}{\beta^mv(\mathbf d)-\beta}\right).\]

\begin{enumerate}
\item[(i)]  Equation (\ref{S_alpha^k_eqn}) gives 
\[S_\alpha^{m-2}(1)=\beta^{m-2}(1-\alpha v(d_1\cdots d_{m-2}))\in(1/\beta,\alpha/\beta].\]
Note that $v(d_1\cdots d_{m-2})=v(\mathbf{d})-1/\beta^{m-1}$, so
\[1-\alpha(v(\mathbf{d})-1/\beta^{m-1})\in(1/\beta^{m-1},\alpha/\beta^{m-1}].\] 
Now
\[1-\alpha(v(\mathbf{d})-1/\beta^{m-1})>1/\beta^{m-1}\]
implies
\[\alpha<\frac{1-1/\beta^{m-1}}{v(\mathbf d)-1/\beta^{m-1}}=\frac{\beta^m-\beta}{\beta^mv(\mathbf d)-\beta}.\]
Moreover,
\[1-\alpha(v(\mathbf{d})-1/\beta^{m-1})\le\alpha/\beta^{m-1}\]
gives $1\le \alpha v(\mathbf{d})$.  Thus we have
\[\alpha\in\left[\frac1{v(\mathbf d)},\frac{\beta^m-\beta}{\beta^mv(\mathbf d)-\beta}\right),\]
and it suffices to show that 
\[\frac{\beta^m+1}{\beta^mv(\mathbf d)+1}<\frac1{v(\mathbf d)}.\]
But this is true since $v(\mathbf d)<v((10)^\infty)=1$. 

\item[(ii)]  Again from Equation (\ref{S_alpha^k_eqn}),
\[S_\alpha^{m-1}(1-\alpha)=\beta^{m-1}(1-\alpha(1+v(e_1\cdots e_{m-1})))\in[-\alpha/\beta,-1/\beta).\]
The assumption that $e_m=-1$ together with Proposition \ref{property_M_difference} give 
\[1+v(e_1\cdots e_{m-1})=1+v(\mathbf e)+1/\beta^m=v(\mathbf d)+1/\beta^m,\]
so
\[1-\alpha(v(\mathbf{d})+1/\beta^m)\in[-\alpha/\beta^m,-1/\beta^m).\]
Now
\[1-\alpha(v(\mathbf{d})+1/\beta^m)\ge -\alpha/\beta^m\]
implies $1\ge\alpha v(\mathbf d)$.  Furthermore, 
\[1-\alpha(v(\mathbf{d})+1/\beta^m)<-1/\beta^m\]
gives
\[\alpha>\frac{1+1/\beta^m}{v(\mathbf d)+1/\beta^m}=\frac{\beta^m+1}{\beta^mv(\mathbf d)+1}.\]
Hence 
\[\alpha\in\left(\frac{\beta^m+1}{\beta^mv(\mathbf d)+1},\frac1{v(\mathbf d)}\right],\]
and it suffices to show 
\[\frac1{v(\mathbf d)}<\frac{\beta^m-\beta}{\beta^mv(\mathbf d)-\beta}.\]
This is true again since $v(\mathbf d)<1$.
\end{enumerate}
\end{proof}

The implications of Propositions \ref{matching_implies_property_M}, \ref{I_delta_nonempty_etc}, \ref{I_d_determines_expansions} and \ref{matching_intervals_thm} are summarised in the following:

\begin{cor}\label{matching_iff_property_M}
The sets $\mathcal{M}$ and $\{I_{\mathbf d}\}_{\mathbf d\in\mathcal{M}}$ classify the matching words and intervals, respectively, of the maps $S_\alpha$.
\end{cor}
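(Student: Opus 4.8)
The plan is to assemble the corollary directly from the four preceding propositions, which together pin down both the set of matching words and the exact extent of each matching interval. First I would recall the two notions at stake: the matching word of a parameter $\alpha$ with matching is the initial block $d_1\cdots d_{m(\alpha)}$ of the $S_\alpha$-expansion of $1$, and the matching interval attached to a word $\mathbf d$ is the maximal subinterval of $[1,\beta]$ on which this block is constantly equal to $\mathbf d$. The goal is thus twofold: (a) show that $\mathcal M$ is precisely the set of matching words, and (b) show that for each $\mathbf d\in\mathcal M$ the interval $I_{\mathbf d}$ is precisely the matching interval of $\mathbf d$.

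For (a), the inclusion that every matching word lies in $\mathcal M$ is exactly Proposition \ref{matching_implies_property_M}. For the reverse inclusion I would invoke Proposition \ref{I_delta_nonempty_etc} to guarantee that $I_{\mathbf d}\subset(1,\beta]$ is nonempty, and then Proposition \ref{I_d_determines_expansions} to produce, for any chosen $\alpha\in I_{\mathbf d}$, a parameter whose matching word is literally $\mathbf d$; hence each $\mathbf d\in\mathcal M$ is realised as a matching word. Combining the two inclusions identifies the set of matching words with $\mathcal M$.

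For (b), I would fix $\mathbf d\in\mathcal M$ and consider the set $P(\mathbf d):=\{\alpha\in[1,\beta] : S_\alpha \text{ has matching with matching word } \mathbf d\}$. Proposition \ref{I_d_determines_expansions} gives $I_{\mathbf d}\subseteq P(\mathbf d)$, while Proposition \ref{matching_intervals_thm} gives the opposite inclusion $P(\mathbf d)\subseteq I_{\mathbf d}$; hence $P(\mathbf d)=I_{\mathbf d}$. Because $I_{\mathbf d}$ is a nonempty interval by Proposition \ref{I_delta_nonempty_etc}, the set of parameters sharing the matching word $\mathbf d$ is itself an interval and therefore coincides with its own maximal interval of constancy of the matching word—that is, $I_{\mathbf d}$ is exactly the matching interval of $\mathbf d$. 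Ranging over $\mathbf d\in\mathcal M$ then exhibits $\{I_{\mathbf d}\}_{\mathbf d\in\mathcal M}$ as the full collection of matching intervals, and since the matching word of a fixed $\alpha$ is uniquely determined these intervals are automatically pairwise disjoint and cover every matching parameter.

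Since the statement is a genuine corollary, I expect no substantive obstacle in the deductions themselves; the only point requiring slight care is the logical bookkeeping in step (b)—namely checking that the double inclusion $P(\mathbf d)=I_{\mathbf d}$, together with the interval structure of $I_{\mathbf d}$, upgrades ``$I_{\mathbf d}$ is contained in the matching interval'' to ``$I_{\mathbf d}$ equals the matching interval'' without needing any appeal to maximality beyond what $P(\mathbf d)$ being an interval already supplies.
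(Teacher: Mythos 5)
Your proposal is correct and follows exactly the route the paper takes: the paper's "proof" is simply the observation that the corollary summarises Propositions \ref{matching_implies_property_M}, \ref{I_delta_nonempty_etc}, \ref{I_d_determines_expansions} and \ref{matching_intervals_thm}, which is precisely the double-inclusion bookkeeping you carry out. Your explicit identification $P(\mathbf d)=I_{\mathbf d}$ is a clean way of making the maximality argument precise, but it is not a different method.
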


\begin{remark}\label{es_classified}
The results of this subsection also imply that $\varphi(\mathcal{M})$ classifies the first $m(\alpha)<\infty$ digits of the $S_\alpha$-expansions of $1-\alpha$ for matching parameters $\alpha\in [1,\beta]$.  Moreover, the intervals $I_{\mathbf d}$ in $\{I_{\mathbf d}\}_{\mathbf d\in\mathcal{M}}=\{I_{\varphi^{-1}(\mathbf e)}\}_{\mathbf e\in\varphi(\mathcal{M})}$ classify the maximal subintervals of matching parameters $\alpha$ for which these first $m(\alpha)$ digits coincide (and equal $\mathbf e=\varphi(\mathbf d)$).
\end{remark}

\begin{remark}
While not needed for our purposes, we briefly mention that the sets $\mathcal{M}$ (or $\varphi(\mathcal{M})$) and $\{I_{\mathbf d}\}_{\mathbf d\in\mathcal{M}}$ also give rise to classifications of the $T_\alpha$-expansions of $1$ (resp. $\beta(1-\alpha)$) before matching and the maximal intervals of parameters $\alpha$ on which these expansions coincide.  In particular, if $\mathbf d\in\mathcal{M}$ (resp. $\mathbf e:=\varphi(\mathbf d)\in\varphi(\mathcal{M})$), then the corresponding $T_\alpha$-word $\mathbf d'$ (resp. $\mathbf e'$) `forgets' each non-terminal $0$ which immediately follows a $1$ (resp. $-1$, and $\mathbf e'$ also forgets the initial $0$ of $\mathbf e$).  The matching intervals $I_{\mathbf d}$ are unchanged.  For instance, $\mathbf d=10100001$ and $\mathbf e=\varphi(\mathbf d)=\overline{00001010}$ give rise to the words $\mathbf d'=110001$ and $\mathbf e'=\overline{000110}$ for $T_\alpha$, and each of these words corresponds to the matching interval $I_{\mathbf d}=\left(\frac{\beta^8+\beta}{\beta^7+\beta^5+\beta^2},\frac{\beta^8-1}{\beta^7+\beta^5}\right)$.
\end{remark}

%%%%%%%%%%
\subsection{Cascades of matching intervals}\label{Cascades of matching intervals}
%%%%%%%%%%

Here it is shown that each \textit{unexceptional matching interval} $I_{\mathbf d},\ \mathbf d\in\mathcal{M}_U$, generates a whole `cascade' of unexceptional matching intervals with adjacent endpoints.  Define $\psi:\mathcal{M}_U\to\{0,1\}^*$, where for $\mathbf d=d_1\cdots d_m\in\mathcal{M}_U$ and $\mathbf e:=\varphi(\mathbf d)=e_1\cdots e_m$, 
\[\psi(\mathbf d)=\begin{cases}
\mathbf d\overline{\mathbf e}, & d_m=0\\
\mathbf d\overline{e_2\cdots e_m}, & d_m=1\\
\end{cases}.\]
Recall the definition of the matching interval $I_{\mathbf d}=(\alpha_{\mathbf d}^-,\alpha_{\mathbf d}^+)$ from (\ref{matching_interval}).  
\begin{prop}\label{cacscades_preserve_prop_M}
The map $\psi$ preserves Property $M$, i.e. $\psi(\mathcal{M}_U)\subset \mathcal{M}_U$.  Moreover, $\alpha_{\mathbf d}^-=\alpha_{\psi(\mathbf d)}^+$ for each $\mathbf d\in\mathcal{M}$.
\end{prop}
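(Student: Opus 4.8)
The plan is to prove both assertions for $\mathbf d\in\mathcal M_U$ (the domain of $\psi$) by first recording explicit block forms for $\psi(\mathbf d)$ and then exploiting that in both parity cases $\psi(\mathbf d)$ factors as $\mathbf d\mathbf f$, where $\mathbf f:=\overline{\mathbf e}$ if $d_m=0$ and $\mathbf f:=\overline{e_2\cdots e_m}$ if $d_m=1$, with $\mathbf e:=\varphi(\mathbf d)=e_1\cdots e_m$. Writing $\mathbf d=1\mathbf w_{i_1}\cdots\mathbf w_{i_n}(1-i_n/2)$ (with $i_1=2$, which is exactly where unexceptionality is used: for $n=1$ the only alternative $i_1=0$ yields the exceptional word $1001$), a direct concatenation using $\mathbf w_2\,0\,0=\mathbf w_2\mathbf w_0$ resp.\ $\mathbf w_0\,1=\mathbf w_1$ gives
\[
\psi(\mathbf d)=
\begin{cases}
1\mathbf w_{i_1}\cdots\mathbf w_{i_{n-1}}\mathbf w_2\mathbf w_0\mathbf w_{2-i_1}\cdots\mathbf w_{2-i_{n-1}}\mathbf w_0\,1, & d_m=0,\\
1\mathbf w_{i_1}\cdots\mathbf w_{i_{n-1}}\mathbf w_1\mathbf w_{2-i_1}\cdots\mathbf w_{2-i_{n-1}}\mathbf w_2\,0, & d_m=1.
\end{cases}
\]
In each case $\psi(\mathbf d)$ begins with $1\mathbf w_2$ and ends in $\mathbf w_{j_N}(1-j_N/2)$ with $j_N\in\{0,2\}$, so $\psi(\mathbf d)\in\mathcal B$; it lies in $\{0,1\}^*$ with no two consecutive $1$'s. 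I would also record the analogous decomposition of $\mathbf g:=\overline{\varphi(\psi(\mathbf d))}$, which factors as $\mathbf g=\mathbf c\,\mathbf d$, where $\mathbf c$ is obtained from $\overline{\mathbf e}$ by deleting its last one (resp.\ three) symbols and appending the junction zeros.

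For the endpoint identity I would compute $v(\psi(\mathbf d))=v(\mathbf d)+\beta^{-m}v(\mathbf f)$ with $m:=\operatorname{len}(\mathbf d)$. Since $v(\mathbf f)=-v(\mathbf e)$ (case $d_m=0$) or $v(\mathbf f)=-\beta v(\mathbf e)$ (case $d_m=1$, using $e_1=0$), and Proposition \ref{property_M_difference} gives $v(\mathbf e)=v(\mathbf d)-1$, one obtains $v(\psi(\mathbf d))=v(\mathbf d)-\beta^{-m}(v(\mathbf d)-1)$ (with $\operatorname{len}=2m$) resp.\ $v(\psi(\mathbf d))=v(\mathbf d)-\beta^{1-m}(v(\mathbf d)-1)$ (with $\operatorname{len}=2m-1$). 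Substituting these, together with the last digit of $\psi(\mathbf d)$, into the formula (\ref{matching_interval}) for $\alpha^+_{\psi(\mathbf d)}$ and clearing denominators reduces the claimed equality $\alpha^-_{\mathbf d}=\alpha^+_{\psi(\mathbf d)}$ to a polynomial identity in $\beta$ and $v(\mathbf d)$ that holds identically; this step is routine algebra.

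For Property $M$, write $\mathbf D:=\psi(\mathbf d)$ and verify the two families $\sigma^j\mathbf D\preceq\mathbf D$ and $\sigma^j\mathbf g\preceq\mathbf D$ for all $j\ge 0$. First note $\mathbf d\prec\mathbf D$ (since $\mathbf f\succ 0^\infty$), and that every shift of $\mathbf f$ is a shift of $\overline{\mathbf e}$, so $\sigma^i\mathbf f\,0^\infty\preceq\mathbf d\prec\mathbf D$ by Property $M$ of $\mathbf d$; these two facts dispatch all shifts $\sigma^j\mathbf D$ with $j\ge m$. For $1\le j\le m-1$ the first $m-j$ symbols of $\sigma^j\mathbf D$ and of $\mathbf D$ agree with those of $\sigma^j\mathbf d$ and of $\mathbf d$; hence there is a clean dichotomy. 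If the first disagreement witnessing $\sigma^j\mathbf d\prec\mathbf d$ occurs within this overlap, the same disagreement forces $\sigma^j\mathbf D\prec\mathbf D$. Otherwise $d_{j+1}\cdots d_m=d_1\cdots d_{m-j}$ (so $\mathbf d$ has period $j$ on $[1,m]$), and the comparison reduces to the single inequality $\mathbf f\,0^\infty\preceq s\mathbf f\,0^\infty$ with $s:=d_{m-j+1}\cdots d_m=\sigma^{m-j}\mathbf d$. The family $\sigma^j\mathbf g\preceq\mathbf D$ is handled identically via $\mathbf g=\mathbf c\,\mathbf d$: shifts landing in the terminal $\mathbf d$ are $\preceq\mathbf d\prec\mathbf D$, shifts landing in $\mathbf c$ are controlled by $\sigma^i\overline{\mathbf e}\preceq\mathbf d$ (replacing trailing $1$'s by $0$'s only lowers a word lexicographically), and the remaining shifts are again junction ones requiring a comparison of the tail $\mathbf d$ against a continuation.

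The main obstacle is exactly these junction comparisons, e.g.\ $\mathbf f\,0^\infty\preceq s\mathbf f\,0^\infty$. To settle them I would use that $i_1=2$ forces $\mathbf w_{2-i_1}=\mathbf w_0$, so $\mathbf f$ opens with a run of zeros ($00$, indeed $000$ when $d_m=0$), whereas $s=\sigma^{m-j}\mathbf d$ is a shift of the shift-maximal word $\mathbf d=101\cdots$; combined with the period-$j$ structure on $[1,m]$ implied by the junction hypothesis and $\sigma^{m-j}\mathbf d\preceq\mathbf d$, the first $1$ of $s$ appears early enough that the comparison is decided in favour of $s\mathbf f$ before any appended copy of $\mathbf f$ can interfere. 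Making this quantitative — locating the first nonzero symbol of $s$ relative to the leading zero-run of $\mathbf f$ — is the technical crux, and I expect it to require a short case analysis on the final block of $\mathbf d$, in which the no-consecutive-nonzero-digit structure together with the shift-maximality of $\mathbf d$ are essential. Since $\psi(\mathbf d)$ is then in $\mathcal M$ and strictly longer than $1001$, it lies in $\mathcal M_U$, completing $\psi(\mathcal M_U)\subset\mathcal M_U$.
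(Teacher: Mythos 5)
Your setup matches the paper's: the same block factorisations $\psi(\mathbf d)=\mathbf d\mathbf f$ and $\overline{\varphi(\psi(\mathbf d))}=\mathbf c\,\mathbf d$, the same valuation computation $v(\psi(\mathbf d))=v(\mathbf d)+\beta^{-m}v(\mathbf f)$ combined with Proposition \ref{property_M_difference} for the endpoint identity (which really is routine algebra, as you say), the correct dispatch of the shifts $\sigma^j$ with $j\ge m$ via Property $M$ of $\mathbf d$, and the correct reduction of the remaining shifts to the junction comparison $\mathbf f\,0^\infty\preceq s\,\mathbf f\,0^\infty$ in the case where $d_{j+1}\cdots d_m=d_1\cdots d_{m-j}$. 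The handling of the shifts of $\mathbf c\,\mathbf d$ is also fine: as in the paper, replacing the deleted trailing digits of $\overline{\mathbf e}$ by $0$ strictly lowers the word, so those shifts are dominated by $\sigma^i(\overline{\mathbf e})\preceq\mathbf d$.

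The gap is exactly at the point you flag as the crux, and your proposed mechanism for closing it does not work. You argue that since $\mathbf f$ opens with a run of zeros while $s$ is a rotated piece of the shift-maximal word $\mathbf d$, the first $1$ of $s$ occurs early enough to decide the comparison ``before any appended copy of $\mathbf f$ can interfere.'' This is false in general: writing $s=\mathbf w_{i_{n-\ell+2}}\cdots\mathbf w_{i_n}$ and $\mathbf f=\mathbf w_0\mathbf w_{2-i_1}\mathbf w_{2-i_2}\cdots$, the two words can agree block-by-block through all of $s$ and beyond (this is precisely the subcase $(0,2-i_1,\dots,2-i_{\ell-2})=(i_{n-\ell+2},\dots,i_n)$ in the paper's proof), so no count of leading zeros settles the comparison. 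What actually closes the argument is the second half of Property $M$ for $\mathbf d$: one assumes the junction inequality fails, reads off the resulting block identities, applies the complementation $\mathbf w_{i}\leftrightarrow\mathbf w_{2-i}$ built into $\varphi$, and produces some $k$ with $\sigma^k(\overline{\varphi(\mathbf d)})\succ\mathbf d$, contradicting $\mathbf d\in\mathcal M$. Your outline never invokes the $\overline{\varphi(\mathbf d)}$-half of Property $M$ when treating the shifts of $\psi(\mathbf d)$ itself, and without it the junction comparison cannot be resolved; this is the missing idea rather than a ``short case analysis on the final block.''
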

\begin{proof}
Let $\mathbf d=d_1\cdots d_m\in\mathcal{M}_U$, and assume $d_m=0$ (the other case is similar).  
% Proofs on pp. 8, 9 & 22--26 of Goodnotes 'cascades.pdf'.
We first show $\alpha_{\mathbf d}^-=\alpha_{\psi(\mathbf d)}^+$, assuming $\psi(\mathcal{M}_U)\subset\mathcal{M}_U$.
We compute
\begin{align*}
\alpha_{\psi(\mathbf d)}^+&=\frac{\beta^{2m}-1}{\beta^{2m}v(\mathbf d\overline{\mathbf e})-1}\\
&=\frac{(\beta^m+1)(\beta^m-1)}{\beta^{2m}(v(\mathbf d)-(1/\beta^m)v(\mathbf e))-1}\\
&=\frac{(\beta^m+1)(\beta^m-1)}{\beta^{2m}v(\mathbf d)-\beta^m(v(\mathbf d)-1)-1}\\
&=\frac{(\beta^m+1)(\beta^m-1)}{(\beta^{m}v(\mathbf d)+1)(\beta^m-1)}\\
&=\frac{\beta^m+1}{\beta^{m}v(\mathbf d)+1}\\
&=\alpha_{\mathbf d}^-
\end{align*}
as desired.  Now we prove that $\mathbf d':=\psi(\mathbf d)\in\mathcal{M}_U$.  Clearly $\mathbf d'\notin\{10,1001\}$, so we need only show $\mathbf d'\in\mathcal{M}$.  Write
\[\mathbf d=1\mathbf w_{i_1}\cdots \mathbf w_{i_n}0\]
with $i_n=2$ and
\[\mathbf e=\varphi(\mathbf d)=\overline{0\mathbf w_{2-i_1}\cdots \mathbf w_{2-i_n}1}.\]
Then 
\begin{align*}
\mathbf d'&=\mathbf d\overline{\mathbf e}\\
&=1\mathbf w_{i_1}\cdots \mathbf w_{i_n}00\mathbf w_{2-i_1}\cdots \mathbf w_{2-i_n}1\\
&=1\mathbf w_{i_1}\cdots \mathbf w_{i_n}\mathbf w_0\mathbf w_{2-i_1}\cdots \mathbf w_{2-i_n}1,
\end{align*}
so $\mathbf d'\in\mathcal B$ is in admissible block form.  To prove $\mathbf d'\in\mathcal{M}$, it remains to show for each $j\ge0$ that (i) $\sigma^j(\mathbf d')\preceq \mathbf d'$ and (ii) $\sigma^j(\overline{\varphi(\mathbf d')})\preceq \mathbf d'$.  (Recall that $\mathbf d\in\mathcal{M}$ implies the analogous inequalities hold for $\mathbf d$.)
\begin{enumerate}
\item[(i)]  If $j\ge m$, then 
\[\sigma^j(\mathbf d')=\sigma^j(\mathbf d\overline{\mathbf e})=\sigma^{j-m}(\overline{\mathbf e})\preceq\mathbf d\preceq\mathbf d'.\]
Assume $j<m$, and suppose for the sake of contradiction that $\sigma^j(\mathbf d')\succ\mathbf d'$.  Since $\mathbf d'$ begins with $1$, so does $\sigma^j(\mathbf d')$.  Thus either 
\[\sigma^j(\mathbf d')=1\mathbf w_{i_\ell}\cdots \mathbf w_{i_n}\mathbf w_0\mathbf w_{2-i_1}\cdots \mathbf w_{2-i_n}1\]
for some $1<\ell\le n$, or
\[\sigma^j(\mathbf d')=1\mathbf w_0\mathbf w_{2-i_1}\cdots \mathbf w_{2-i_{n}}1.\]
Since $\mathbf w_0\prec\mathbf w_2=\mathbf w_{i_1}$, the second case is impossible and we must have
\[1\mathbf w_{i_\ell}\cdots \mathbf w_{i_n}\mathbf w_0\mathbf w_{2-i_1}\cdots \mathbf w_{2-i_n}1\succ 1\mathbf w_{i_1}\cdots \mathbf w_{i_n}\mathbf w_0\mathbf w_{2-i_1}\cdots \mathbf w_{2-i_n}1\]
for some $\ell$.  Since $\sigma^j(\mathbf d)\preceq \mathbf d$, it follows that
\[1\mathbf w_{i_\ell}\cdots \mathbf w_{i_n}=1\mathbf w_{i_1}\cdots \mathbf w_{i_{n-\ell+1}}\]
and thus
\[\mathbf w_0\mathbf w_{2-i_1}\cdots \mathbf w_{2-i_n}1\succ \mathbf w_{i_{n-\ell+2}}\cdots \mathbf w_{i_n}\mathbf w_0\mathbf w_{2-i_1}\cdots \mathbf w_{2-i_n}1.\]
Then either there is some $1\le p\le \ell-3$ for which
\[(0,2-i_1,\dots,2-i_{p-1})=(i_{n-\ell+2},i_{n-\ell+3},\dots,i_{n+p-\ell+1})\]
and $2-i_p>i_{n+p-\ell+2}$, or 
\[(0,2-i_1,\dots,2-i_{\ell-2})=(i_{n-\ell+2},i_{n-\ell+3},\dots,i_n).\]
In the first case, 
\[(2-i_{n-\ell+2},2-i_{n-\ell+3},\dots,2-i_{n+p-\ell+1})=(2,i_1,\dots,i_{p-1})\]
and $2-i_{n+p-\ell+2}>i_p$.  Thus there exists some $k\ge 0$ for which 
\begin{align*}
\sigma^k(\overline{\mathbf e})&=1\mathbf w_{2-i_{n-\ell+3}}\cdots \mathbf w_{2-i_{n+p-\ell+1}}\mathbf w_{2-i_{n+p-\ell+2}}\cdots \mathbf w_{2-i_n}1\\
&\succ 1\mathbf w_{i_1}\cdots \mathbf w_{i_{p-1}}\mathbf w_{i_p}\cdots \mathbf w_{i_n}0\\
&=\mathbf d,
\end{align*}
contradicting the fact that $\mathbf d\in\mathcal{M}$.  In the second case, 
\[(2-i_{n-\ell+2},2-i_{n-\ell+3},\dots,2-i_n)=(2,i_1,\dots,i_{\ell-2}).\]
Since $i_n=2$ implies $i_{\ell-2}=0$, there is again some $k\ge 0$ for which 
\begin{align*}
\sigma^k(\overline{\mathbf e})&=1\mathbf w_{2-i_{n-\ell+3}}\cdots  \mathbf w_{2-i_{n-1}}\mathbf w_{2-i_n}1\\
&=1\mathbf w_{2-i_{n-\ell+3}}\cdots  \mathbf w_{2-i_{n-1}}\mathbf w_1\\
&\succ 1\mathbf w_{i_1}\cdots \mathbf w_{i_{\ell-3}}\mathbf w_{i_{\ell-2}}\cdots \mathbf w_{i_n}0\\
&=\mathbf d,
\end{align*}
contradicting $\mathbf d\in\mathcal{M}$.

\item[(ii)]  Set $\mathbf e':=\varphi(\mathbf d')=e_1\cdots e_m$, and recall that $d_m=0$ implies $e_m=-1$.  Then 
\begin{align*}
\mathbf e'&=\overline{0\mathbf w_{2-i_1}\cdots\mathbf w_{2-i_n}\mathbf w_2\mathbf w_{i_1}\cdots\mathbf w_{i_n}0}\\
&=e_1\cdots e_{m-1}0\overline{\mathbf d}.
\end{align*}
If $j<{m-1}$, then 
\[\sigma^j(\overline{\mathbf e'})=\overline{e_{j+1}\cdots e_{m-1}}0\mathbf d\prec \overline{e_{j+1}\cdots e_{m}}=\sigma^j(\overline{\mathbf e})\preceq\mathbf d\preceq\mathbf d'.\]
If $j=m-1$, then
\[\sigma^j(\overline{\mathbf e'})=0\mathbf d\prec\mathbf d',\]
and if $j\ge m$, then 
\[\sigma^j(\overline{\mathbf e'})=\sigma^{j-m}(\mathbf d)\preceq\mathbf d\preceq\mathbf d'.\]
\end{enumerate}
This concludes the proof that $\mathbf d'=\psi(\mathbf d)\in\mathcal{M}$ and thus $\psi(\mathcal{M}_U)\subset\mathcal{M}_U$.
\end{proof}

%%%%%%%%%%
%%%%%%%%%%
%%%%%%%%%% 
\section{Invariant measures and frequencies of digits}\label{Invariant measures and frequencies of digits}
%%%%%%%%%%
%%%%%%%%%%
%%%%%%%%%%

As noted above, our main interest in matching arises from results of \cite{kopf_90} which provide explicit expressions for the densities of absolutely continuous invariant measures.  These densities depend on the orbits of the left and right limits at critical points and are in general infinite sums of (finite) step functions; however, the infinite sum becomes finite when either matching or a Markov partition occurs.  These observations are used in this section to obtain explicit invariant measures $\nu_\alpha$ and $\mu_\alpha$ for the maps $S_\alpha$ and $T_\alpha$, respectively, and asymptotic relative frequencies of digits occurring in their respective generic expansions. These measures and frequencies are used in the proofs of Theorems \ref{main_measure_thm} and \ref{main_freq_thm}.

Recall that $B(x):=\beta x\ (\text{mod}\ 1)$.  It is well known that 
\[h(x):=\begin{cases}
\frac{5+3\sqrt{5}}{10}, & x\in[0,1/\beta)\\
\frac{5+\sqrt{5}}{10}, & x\in[1/\beta,1]
\end{cases}\]
is the density of a unique, ergodic, $B$-invariant probability measure which is equivalent to Lebesgue measure $\lambda$ (\cite{renyi_57}).  By Birkhoff's ergodic theorem, the frequency of $0$ in $\lambda$-a.e. $\beta$-expansion is $\int_{[0,1/\beta)}hd\lambda=(5+\sqrt{5})/10$.  When $\alpha=1$, the map $S_\alpha=S_1$ restricts on $[0,1]\backslash\{1/\beta\}$ to $B$ and on $[-1,0]\backslash\{-1/\beta\}$ to $-B(-x)$.  Since $S_1$ is invariant on $\pm[0,1]$, we find that the frequency of $0$ in $\lambda$-a.e. $S_1$-expansion is also $(5+\sqrt{5})/10$.  Define $f_1:[-1,1]\to [-1,1]$ by $f_1(x)=h(|x|)/2$, and recall the definitions of the subintervals $J_i\subset[-1,1],\ i\in\{-1,0,1\}$ from \S\ref{Introduction}.  Note, then, that the measure $\nu_1$ defined on Lebesgue-measurable $A\subset[-1,1]$ by $\nu_1(A)=\int_Af_1d\lambda$ satisfies $\nu_1(J_0):=(5+\sqrt{5})/10$.  

A similar analysis (with Lebesgue measure) reveals that the frequency of $0$ in $\lambda$-a.e. $T_1$-expansion is $1/\beta$.  Setting $\mu_1:=\lambda/2$ as normalised Lebesgue measure gives $\mu_1(J_0)=1/\beta$.  In what follows we consider $\alpha\neq 1$.

%%%%%%%%%%
\subsection{Invariant measures}\label{Invariant measures}
%%%%%%%%%%

Let $\alpha\in(1,\beta]$.  Following a procedure completely analogous to that in \S2.1 of \cite{dajani_kalle_20}, results of \cite{kopf_90} imply that the collection of absolutely continuous $S_\alpha$-invariant measures forms a one real-dimensional linear space and thus there is a unique---and hence ergodic---absolutely continuous invariant probability measure $\nu_\alpha$.  Moreover, its corresponding probability density is given explicitly by
\[f_\alpha(x):=\frac1C\sum_{t\ge0}\frac{1}{\beta^{t+1}}\left(1_{[-1,S_\alpha^t(\alpha-1))}(x)-1_{[-1,S_\alpha^t(-1))}(x)+1_{[-1,S_\alpha^t(1))}(x)-1_{[-1,S_\alpha^t(1-\alpha))}(x)\right),\]
where $C\in\mathbb{R}$ is some normalising constant.  Symmetry of $S_\alpha$ together with Proposition \ref{difference_of_orbits} allow us to rewrite $f_\alpha(x)$ as 
\begin{equation}\label{invariant_density_simplified}
f_\alpha(x)=\frac1C\sum_{t\ge 0}\frac1{\beta^{t+1}}\left(1_{[S_\alpha^t(-1),S_\alpha^t(\alpha-1))}(x)+1_{[S_\alpha^t(1-\alpha),S_\alpha^t(1))}(x)\right).
\end{equation}
Note that $f_\alpha$ is bounded away from $0$ on $[-1,1)$, so $\nu_\alpha$ is in fact equivalent to Lebesgue measure $\lambda$.  Also observe that when matching (or a Markov partition) occurs, the summation becomes a finite sum and $f_\alpha(x)$ is a (finite) step function (see Figure \ref{densities_pic}).  

The measure $\nu_\alpha$ can now be used to obtain a unique, absolutely continuous $T_\alpha$-invariant measure $\mu_\alpha=\int g_\alpha d\lambda$.  For each $\alpha\in (1,\beta]$, define a probability measure  
\begin{equation}\label{mu_alpha}
\mu_\alpha(A):=\frac{\nu_\alpha\left(S_\alpha^{-1}(A)\cap J_0\right)}{\nu_\alpha(J_0)}.
\end{equation}
on $[-1,1]$, where $A\subset[-1,1]$ is Lebesgue-measurable.  Note that $S_\alpha^{-1}(A)\cap J_0=\frac1\beta A$, so $\mu_\alpha$ may also be written $\mu_\alpha(A)=\nu_\alpha(\frac1\beta A)/\nu_\alpha(J_0)$.

\begin{thm}\label{mu_alpha_thm}
The measure $\mu_\alpha$ is the unique---hence ergodic---invariant probability measure for $T_\alpha$ which is absolutely continuous with respect to Lebesgue measure.  Moreover, $\mu_\alpha$ is equivalent to Lebesgue measure.
\end{thm}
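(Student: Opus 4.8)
The plan is to treat the three assertions—invariance, equivalence to $\lambda$, and uniqueness—separately, in each case exploiting that $T_\alpha$ is the jump transformation of $S_\alpha$ over $J_0$ (equation (\ref{T_in_terms_of_S})) together with the $S_\alpha$-invariance of $\nu_\alpha$. Writing $J_*:=J_{-1}\cup J_1$ and $C:=\nu_\alpha(J_0)$, the defining identity (\ref{mu_alpha}) reads $C\mu_\alpha(A)=\nu_\alpha(S_\alpha^{-1}A\cap J_0)$, and I will use freely the two structural facts that $S_\alpha(J_*)\subseteq J_0$ and hence $J_*\subseteq S_\alpha^{-1}J_0$.

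For invariance I would compute $S_\alpha^{-1}(T_\alpha^{-1}A)\cap J_0$ by hand. Since $T_\alpha^{-1}A=(J_0\cap S_\alpha^{-1}A)\sqcup(J_*\cap S_\alpha^{-2}A)$ by (\ref{T_in_terms_of_S}), setting $E:=J_0\cap S_\alpha^{-1}A$ and rewriting $J_*\cap S_\alpha^{-2}A=J_*\cap S_\alpha^{-1}E=:F$ (valid because $S_\alpha(J_*)\subseteq J_0$), one finds $S_\alpha^{-1}(T_\alpha^{-1}A)\cap J_0=(J_0\cap S_\alpha^{-1}E)\sqcup(J_0\cap S_\alpha^{-1}F)$. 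The key observation is that $F\subseteq J_*$ forces $S_\alpha^{-1}F\subseteq J_0$ (a point of $J_*$ maps into $J_0$, hence not into $F$), so the second piece is all of $S_\alpha^{-1}F$ and has measure $\nu_\alpha(F)=\nu_\alpha(J_*\cap S_\alpha^{-1}E)$ by invariance. Adding the first piece and using $J_0\sqcup J_*=[-1,1]$ and invariance once more collapses the total to $\nu_\alpha(S_\alpha^{-1}E)=\nu_\alpha(E)=C\mu_\alpha(A)$, which is exactly the claim $\mu_\alpha(T_\alpha^{-1}A)=\mu_\alpha(A)$. Equivalence to $\lambda$ is then immediate: since $S_\alpha^{-1}A\cap J_0=\frac1\beta A$, we have $C\mu_\alpha(A)=\nu_\alpha(\frac1\beta A)$, and as $\nu_\alpha\sim\lambda$ while $A\mapsto\frac1\beta A$ preserves $\lambda$-null sets in both directions, $\mu_\alpha(A)=0\iff\lambda(A)=0$; this also exhibits the density $g_\alpha(x)=\frac1{C\beta}f_\alpha(x/\beta)$.

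The substantive part is uniqueness, and the strategy is to set up a measure-level bijection between the absolutely continuous invariant measures of $S_\alpha$ and those of $T_\alpha$. Given any $T_\alpha$-invariant probability measure $\mu\ll\lambda$, define the skyscraper measure $\nu(A):=\mu(A)+\mu(J_*\cap S_\alpha^{-1}A)$ (associated to the jump function $\tau\equiv1$ on $J_0$, $\tau\equiv2$ on $J_*$). A short computation using $T_\alpha$-invariance of $\mu$ in the form $\mu(A)=\mu(J_0\cap S_\alpha^{-1}A)+\mu(J_*\cap S_\alpha^{-2}A)$ gives $\nu(S_\alpha^{-1}A)=\nu(A)$, so $\nu$ is $S_\alpha$-invariant; it is finite ($\nu([-1,1])=1+\mu(J_*)$) and absolutely continuous (by nonsingularity of the piecewise-linear $S_\alpha$). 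By the already-established uniqueness of the $S_\alpha$-invariant absolutely continuous probability measure, $\nu$ must be a scalar multiple of $\nu_\alpha$. It then remains to verify that feeding $\nu$ back into construction (\ref{mu_alpha}) returns $\mu$: one computes $\nu(J_0)=\mu(J_0)+\mu(J_*)=1$ (again via $J_*\subseteq S_\alpha^{-1}J_0$), and for the numerator $\nu(\frac1\beta A)=\mu(J_0\cap S_\alpha^{-1}A)+\mu(J_*\cap S_\alpha^{-2}A)=\mu(T_\alpha^{-1}A)=\mu(A)$. Since construction (\ref{mu_alpha}) is insensitive to the scalar (the normalisation by $\nu(J_0)$ cancels it), this forces $\mu=\mu_\alpha$. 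Finally, uniqueness of the absolutely continuous invariant probability measure automatically yields ergodicity, as a non-ergodic such measure would decompose into two mutually singular absolutely continuous invariant measures.

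The main obstacle I anticipate is organising the uniqueness step cleanly: one must check that the two constructions $\mu\mapsto\nu$ and $\nu\mapsto\mu_\alpha$ are mutually inverse on the relevant classes, which is where the jump-structure identities ($S_\alpha(J_*)\subseteq J_0$, $J_*\subseteq S_\alpha^{-1}J_0$, and $S_\alpha^{-1}F\subseteq J_0$ for $F\subseteq J_*$) must be deployed repeatedly and accurately. All of this is of course subsumed in the general theory of jump transformations (e.g.\ \S11.4 of \cite{dajani_kalle_21}), which could be invoked wholesale; the argument above is the self-contained version.
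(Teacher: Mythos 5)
Your proof is correct, and its uniqueness argument is in substance the paper's own: the paper likewise forms the induced measure $\tilde\rho_\alpha(A)=\rho_\alpha(A)+\rho_\alpha\bigl(S_\alpha^{-1}(A)\cap J_{\pm 1}\bigr)$ from a $T_\alpha$-invariant absolutely continuous probability $\rho_\alpha$, identifies it as $c\nu_\alpha$ by one-dimensionality of the space of absolutely continuous $S_\alpha$-invariant measures, and recovers $\rho_\alpha(A)=\tilde\rho_\alpha\bigl(S_\alpha^{-1}(A)\cap J_0\bigr)=\mu_\alpha(A)$ after normalising. Where you genuinely diverge is on existence: the paper invokes Lasota--Yorke to produce \emph{some} absolutely continuous $T_\alpha$-invariant probability $\rho_\alpha$ and only then shows it must equal $\mu_\alpha$ (so invariance of $\mu_\alpha$ is obtained indirectly), whereas you verify $\mu_\alpha(T_\alpha^{-1}A)=\mu_\alpha(A)$ by a direct set-theoretic computation using $S_\alpha(J_{-1}\cup J_1)\subseteq J_0$ and the $S_\alpha$-invariance of $\nu_\alpha$. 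Your route is more self-contained---it removes the dependence on an external existence theorem---at the cost of the slightly fussy decomposition $T_\alpha^{-1}A=(J_0\cap S_\alpha^{-1}A)\sqcup(J_{\pm 1}\cap S_\alpha^{-2}A)$ and the observation that $S_\alpha^{-1}F\subseteq J_0$ for $F\subseteq J_{\pm 1}$, all of which you carry out correctly. One small point of hygiene: when you conclude $\nu=c\nu_\alpha$ you are using the one-dimensionality of the cone of finite absolutely continuous $S_\alpha$-invariant measures (as the paper does), not literally the uniqueness of the invariant \emph{probability} measure; normalising $\nu$ by $1+\mu(J_{\pm 1})$ first makes this precise, and as you note the scalar cancels in the final formula anyway.
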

\begin{proof}
Since $T_\alpha$ is an expanding, piecewise $C^2$ monotone map, results of \cite{lasota_yorke_73} imply the existence of an invariant probability measure $\rho_\alpha$ for $T_\alpha$ which is absolutely continuous with respect to Lebesgue measure.  Let $J_{\pm 1}:=J_{-1}\cup J_1$.  As $T_\alpha$ is a jump transformation for $S_\alpha$, the measure $\rho_\alpha$ induces an $S_\alpha$-invariant measure defined by
\begin{equation}\label{rho_alpha}
\tilde\rho_\alpha(A):=\rho_\alpha(A)+\rho_\alpha\left(S_\alpha^{-1}(A)\cap J_{\pm 1}\right)
\end{equation}
(see, e.g. Proposition 11.4.1 of \cite{dajani_kalle_21}).  Note that for any $A\subset J_{\pm 1}$ we have $S_\alpha^{-1}(A)\subset J_0$, so (\ref{rho_alpha}) gives  $\tilde\rho_\alpha(A)=\rho_\alpha(A)$.  Then for any measurable $A\subset [-1,1]$,
\[\tilde\rho_\alpha\left(S_\alpha^{-1}(A)\cap J_{\pm 1}\right)=\rho_\alpha\left(S_\alpha^{-1}(A)\cap J_{\pm 1}\right)\]
and (\ref{rho_alpha}) gives
\[\rho_\alpha(A)=\tilde\rho_\alpha(A)-\tilde\rho_\alpha\left(S_\alpha^{-1}(A)\cap J_{\pm 1}\right).\]
Since $\tilde\rho_\alpha$ is $S_\alpha$-invariant, the previous line may be rewritten
\[\rho_\alpha(A)=\tilde\rho_\alpha(S_\alpha^{-1}(A))-\tilde\rho_\alpha\left(S_\alpha^{-1}(A)\cap J_{\pm 1}\right)=\tilde\rho_\alpha(S_\alpha^{-1}(A)\cap J_0).\]
Recall that $\nu_\alpha$ is the unique invariant, absolutely continuous probability measure for $S_\alpha$, so $\tilde\rho_\alpha=c\nu_\alpha$ for some $c>0$.  Thus
\[\rho_\alpha(A)=c\nu_\alpha\left(S_\alpha^{-1}(A)\cap J_0\right),\]
and setting $A=[-1,1]$ gives $c=1/\nu_\alpha(J_0)$.  Hence $\rho_\alpha=\mu_\alpha$. 

That $\mu_\alpha$ is equivalent to Lebesgue measure $\lambda$ follows immediately from the fact that $\nu_\alpha$ is equivalent to $\lambda$ and the observation above that $\mu_\alpha(A)=\nu_\alpha(\frac1\beta A)/\nu_\alpha(J_0)$.
\end{proof}

We are now ready to prove Theorem \ref{main_measure_thm}:

\begin{proof}[Proof of Theorem \ref{main_measure_thm}]
Theorem \ref{mu_alpha_thm} asserts the existence of a unique, absolutely continuous $T_\alpha$-invariant probability measure $\mu_\alpha$ which is in fact equivalent to Lebesgue measure.  
It remains to show that for fixed $\mathbf d\in\mathcal{M}$, the density $g_\alpha$ of each $\mu_\alpha,\ \alpha\in I_{\mathbf d},$ is a step function with at most the same, finite number of jumps.  Using a change of variables, one finds that
\[\mu_\alpha(A)=\frac{\nu_\alpha(\frac1\beta A)}{\nu_\alpha(J_0)}=\frac1{\nu_\alpha(J_0)}\int_{\frac1\beta  A}f_\alpha(x) d\lambda(x)=\frac1{\beta\nu_\alpha(J_0)}\int_{A}f_\alpha(x/\beta) d\lambda(x),\]
so
\[g_\alpha(x)=\frac{f_\alpha(x/\beta)}{\beta\nu_\alpha(J_0)}.\]
Since, by (\ref{invariant_density_simplified}), $f_\alpha$ is a linear combination of at most $2m(\alpha)$ indicator functions and $m(\alpha)$ is constant on $I_{\mathbf d}$, the result follows.  
\end{proof}

\begin{figure}[t]
\begin{minipage}[t]{.32\textwidth}
\includegraphics[width=1\textwidth]{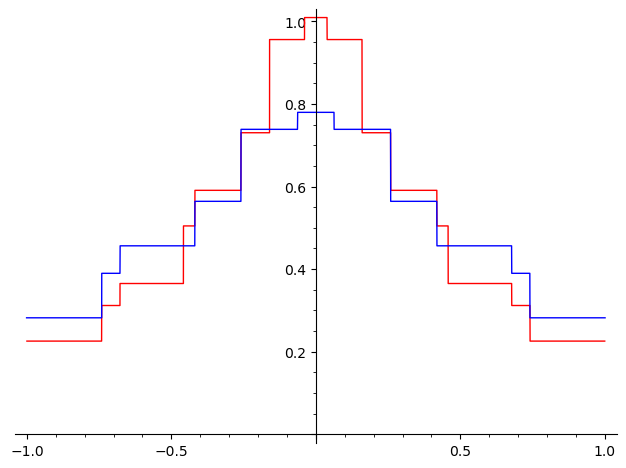}
\end{minipage}
\begin{minipage}[t]{.32\textwidth}
\includegraphics[width=1\textwidth]{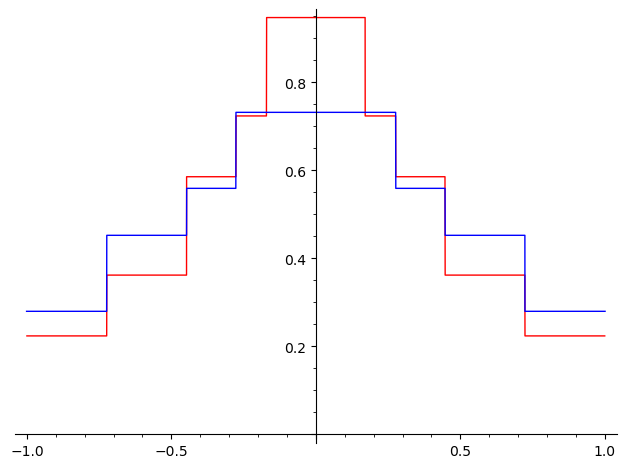}
\end{minipage}
\begin{minipage}[t]{.32\textwidth}
\includegraphics[width=1\textwidth]{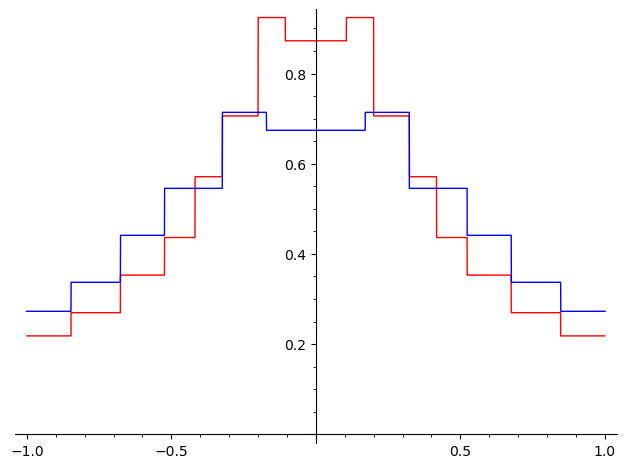}
\end{minipage}
\caption{The invariant densities $f_\alpha$ for $S_\alpha$ (red) and $g_\alpha$ for $T_\alpha$  (blue) with $\alpha=1.16$ (left), $\alpha=1/v(1010)\approx 1.17082\dots$ (center) and $\alpha=1.2$ (right).}
\label{densities_pic}
\end{figure}
 
\begin{remark}
The number of jumps of the invariant densities $f_\alpha$ and $g_\alpha$ for $S_\alpha$ and $T_\alpha$, respectively, are non-constant on matching intervals $I_\mathbf{d}$.  Figure \ref{densities_pic} shows these densities for three values of $\alpha$ in the matching interval $I_{\mathbf d}\approx(1.14589\dots,1.23606\dots)$ with $\mathbf d=1010$.  Note that the number of jumps is fewer for $\alpha=1/v(\mathbf d)$.  One can show that this phenomenon generalises to all matching intervals; in fact, for each $\mathbf d\in\mathcal{M}$, the number of jumps of $f_\alpha$ and $g_\alpha$, respectively, are constant for all but finitely many $\alpha\in I_\mathbf{d}$, and the number of jumps decreases for $\alpha=1/v(\mathbf d)\in I_\mathbf{d}$.
% See pp. 1--5 of Goodnotes 'density_num_jumps.pdf' for proof
\end{remark}

%%%%%%%%%%
\subsection{Frequencies of digits}\label{Frequencies of digits}
%%%%%%%%%%

We are now in a position to determine the frequencies of digits in generic $S_\alpha$- and $T_\alpha$-expansions.  Define $\mathfrak{f}_S,\mathfrak{f}_T:[1,\beta]\to [0,1]$ by
\[\mathfrak{f}_S(\alpha):=\nu_\alpha(J_0)\ \ \ \ \ \text{and} \ \ \ \ \ \mathfrak{f}_T(\alpha):=\mu_\alpha(J_0).\]
For $\alpha\neq 1$, Birkhoff's ergodic theorem---together with the equivalence of the ergodic measures $\nu_\alpha$ and $\mu_\alpha$ with Lebesgue measure $\lambda$---implies that the asymptotic frequencies
\[\lim_{n\to\infty}\frac1n\sum_{i=0}^{n-1}1_{J_0}(S_\alpha^i(x))\ \ \ \ \ \text{and}\ \ \ \ \ \lim_{n\to\infty}\frac1n\sum_{i=0}^{n-1}1_{J_0}(T_\alpha^i(x))\]
of the digit $0$ in Lebesgue-a.e. $S_\alpha$- and $T_\alpha$-expansion are given by $\mathfrak{f}_S(\alpha)$ and $\mathfrak{f}_T(\alpha)$, respectively.  Indeed, with the discussion and notation given at the beginning of \S\ref{Invariant measures and frequencies of digits}, $\mathfrak{f}_S(1)$ and $\mathfrak{f}_T(1)$ also give the generic asymptotic frequencies of the digit $0$.  Note, too, that the frequencies of the digits $\pm 1$ are readily obtained from the frequency of 0.  

As in the proof of Theorem \ref{mu_alpha_thm}, set $J_{\pm 1}:=J_{-1}\cup J_1$.  Using (\ref{mu_alpha}) and the $S_\alpha$-invariance of $\nu_\alpha$, one has for any measurable $A\subset[-1,1]$,
\[\mu_\alpha(A)=\frac{\nu_\alpha(S_\alpha^{-1}(A))-\nu_\alpha(S_\alpha^{-1}(A)\cap J_{\pm 1})}{\nu_\alpha(J_0)}=\frac{\nu_\alpha(A)-\nu_\alpha(S_\alpha^{-1}(A)\cap J_{\pm 1})}{\nu_\alpha(J_0)}.\]
Setting $A=J_0$ and using the fact that $S_\alpha^{-1}(J_0)\cap J_{\pm 1}=J_{\pm 1}$, we find
\[\mu_\alpha(J_0)=\frac{\nu_\alpha(J_0)-\nu_\alpha(J_{\pm 1})}{\nu_\alpha(J_0)}=\frac{\nu_\alpha(J_0)-(1-\nu_\alpha(J_0))}{\nu_\alpha(J_0)}\]
or
\begin{equation}\label{freq_T_and_freq_S}
\mathfrak{f}_T(\alpha)=2-\frac1{\mathfrak{f}_S(\alpha)}.
\end{equation}
\begin{prop}\label{freq_funcs_continuous}
The frequency functions $\mathfrak{f}_S$ and $\mathfrak{f}_T$ are continuous.
\end{prop}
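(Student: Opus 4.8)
The plan is to reduce everything to $\mathfrak{f}_S$ and then exploit the explicit Kopf density of (\ref{invariant_density_simplified}). By (\ref{freq_T_and_freq_S}) we have $\mathfrak{f}_T=2-1/\mathfrak{f}_S$, and since $\mathfrak{f}_T$ takes values in $[0,1]$ this forces $\mathfrak{f}_S\ge 1/2$; hence $\mathfrak{f}_S$ is bounded away from $0$, and continuity of $\mathfrak{f}_T$ follows immediately from that of $\mathfrak{f}_S$. So I would prove only that $\mathfrak{f}_S$ is continuous. Write $\Phi_\alpha:=\sum_{t\ge0}\beta^{-(t+1)}\phi_{\alpha,t}$ for the unnormalised density, where $\phi_{\alpha,t}:=1_{[S_\alpha^t(-1),S_\alpha^t(\alpha-1))}+1_{[S_\alpha^t(1-\alpha),S_\alpha^t(1))}$, and set $C(\alpha):=\int_{-1}^1\Phi_\alpha\,d\lambda$, so that $f_\alpha=\Phi_\alpha/C(\alpha)$ and
\[\mathfrak{f}_S(\alpha)=\nu_\alpha(J_0)=\frac{\int_{J_0}\Phi_\alpha\,d\lambda}{C(\alpha)}.\]
It thus suffices to show that numerator and denominator depend continuously on $\alpha$, the denominator being bounded below (its $t=0$ term alone contributes $2\alpha/\beta\ge 2/\beta$).

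Next I would record a uniform tail estimate. By Proposition \ref{difference_of_orbits} each interval $[S_\alpha^t(1-\alpha),S_\alpha^t(1))$, and by symmetry also $[S_\alpha^t(-1),S_\alpha^t(\alpha-1))$, has length at most $\beta\alpha\le\beta^2$, so $\|\beta^{-(t+1)}\phi_{\alpha,t}\|_{L^1}\le 2\alpha\beta^{-t}$ and the $L^1$-norm of the tail $\sum_{t\ge T}\beta^{-(t+1)}\phi_{\alpha,t}$ is $O(\beta^{-T})$, \emph{uniformly} in $\alpha\in(1,\beta]$. It is therefore enough to prove, for each fixed $T$, that $\int_{J_0}\sum_{t<T}\beta^{-(t+1)}\phi_{\alpha,t}\,d\lambda$ varies continuously with $\alpha$. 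Since $\int_{J_0}1_{[a,b)}\,d\lambda$ is continuous in the endpoints $a,b$, this reduces to continuity of $\alpha\mapsto S_\alpha^t(1),\,S_\alpha^t(1-\alpha)$ for $t<T$; and by (\ref{S_alpha^k_eqn}) each such endpoint is affine in $\alpha$ wherever the first $t$ digits of the relevant $S_\alpha$-expansion are locally constant, i.e. wherever the orbit in question avoids the critical points $\pm1/\beta$ through time $t$.

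This settles continuity away from parameters at which some orbit meets a critical point. On a matching interval $I_{\mathbf d}$, Proposition \ref{I_d_determines_expansions} fixes the first $m=m(\alpha)$ digits of both expansions, so the relevant endpoints are genuinely affine in $\alpha$ and matching truncates the series; hence $\mathfrak{f}_S$ is a continuous rational function of $\alpha$ on each $I_{\mathbf d}$, including at $\alpha=1/v(\mathbf d)$, where endpoints merely coincide. Likewise, if $\alpha_0$ is a non-matching parameter whose orbits of $1$ and $1-\alpha_0$ avoid $\pm1/\beta$ at every finite time, then the symbolic dynamics is locally constant up to any prescribed time $T$ near $\alpha_0$, the truncated integrals converge, and the uniform tail bound yields $\mathfrak{f}_S(\alpha)\to\mathfrak{f}_S(\alpha_0)$.

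The main obstacle is the remaining set of parameters at which an orbit of $1$ or $1-\alpha$ actually lands on $\pm1/\beta$. By Corollary \ref{B_matching_hole} (with Lemma \ref{S_alpha_and_B}) such a hit corresponds to $B^j(1/\alpha)$ or $B^j(1-1/\alpha)$ meeting the left endpoint $1/\beta\alpha$ of the matching hole, which renders the orbit eventually periodic; I would verify, using the endpoint formula (\ref{matching_interval}) as in the proof of Proposition \ref{matching_intervals_thm}, that these parameters are exactly the endpoints $\alpha_{\mathbf d}^{\pm}$ of the matching intervals together with the boundary values $1,\,1+1/\beta^3,\,1+1/\beta^2$. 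At such a point two distinct symbolic regimes abut, the individual endpoints $S_\alpha^t(\cdot)$ genuinely jump, and continuity is not automatic. Here I would compute the one-sided limits of $\mathfrak{f}_S$ from the adjoining matching intervals via (\ref{matching_interval}) and the cascade identity $\alpha_{\mathbf d}^{-}=\alpha_{\psi(\mathbf d)}^{+}$ of Proposition \ref{cacscades_preserve_prop_M}, and compare them with the value $\mathfrak{f}_S(\alpha_0)$ obtained from the finite (periodic-tail) Markov density at $\alpha_0$. Showing that these agree—that the jumps in the individual orbit endpoints cancel inside $\int_{J_0}\Phi_\alpha$—is precisely where the matching phenomenon is indispensable, and is the delicate heart of the argument.
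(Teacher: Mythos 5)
Your reduction of $\mathfrak{f}_T$ to $\mathfrak{f}_S$ via (\ref{freq_T_and_freq_S}) is exactly what the paper does, and your framework for $\mathfrak{f}_S$ --- write $\mathfrak{f}_S(\alpha)$ as a quotient of integrals of the unnormalised Kopf density, establish a uniform geometric tail bound, and observe that the truncated sums vary continuously wherever the first $T$ symbols of the orbits of $1$ and $1-\alpha$ are locally constant --- is sound and is in the spirit of the argument the paper invokes by reference (\S4 of \cite{dajani_kalle_20}; the paper itself gives no more detail than that citation). The easy cases (interior of matching intervals, non-matching parameters whose orbits never hit $\pm 1/\beta$) are handled correctly by your uniform-tail plus local-constancy argument.

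However, there is a genuine gap at precisely the point you flag yourself: at the exceptional parameters where an orbit of $1$ or $1-\alpha$ lands on $\pm 1/\beta$, you state that one must show the one-sided limits of $\mathfrak{f}_S$ agree with the value computed from the Markov density there, and that this cancellation ``is the delicate heart of the argument''--- but you do not carry it out. Since this is the only place where continuity can actually fail (the individual endpoints $S_\alpha^t(1)$, $S_\alpha^t(1-\alpha)$ genuinely jump there), naming the difficulty is not a proof of the proposition. Two further points would need repair even in an executed version: (i) at a right endpoint $\alpha_{\mathbf d}^+$ there need not be an ``adjoining matching interval'' on the right --- such a point is in general only an accumulation point from above of (cascade) matching intervals, so the one-sided limit must be taken along infinitely many intervals, using the uniform tail bound together with control of $\mathfrak{f}_S$ on cascades (cf.\ Lemma \ref{freq_func_on_cascades}); and (ii) your claimed identification of the critical-hitting parameters with the endpoints $\alpha_{\mathbf d}^{\pm}$ plus finitely many boundary values is asserted, not verified. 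As it stands the proposal is a correct outline with the decisive step missing.
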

\begin{proof}
Arguments completely analogous to those in \S4 of \cite{dajani_kalle_20} give that $\mathfrak{f}_S$ is continuous.  Continuity of $\mathfrak{f}_T$ is immediate from \ref{freq_T_and_freq_S}.
\end{proof}

\begin{figure}[t]
\includegraphics[width=.5\textwidth]{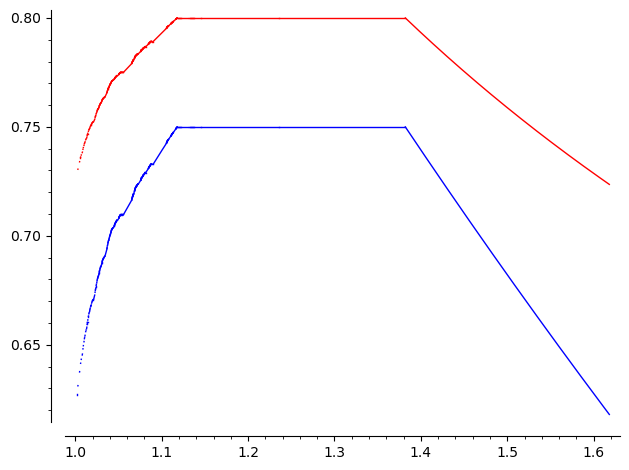}
\caption{The frequency functions $\mathfrak{f}_S(\alpha)$ (red) and $\mathfrak{f}_T(\alpha)$ (blue) plotted on all matching intervals $I_{\mathbf d}$ with $\text{len}(\mathbf d)\le 20$.  The visible plateaux correspond to the interval $[1/2+1/\beta,1+1/\beta^2]$.}
\label{frequency_function_pic}
\end{figure}

The remainder of this subsection is devoted to finding---for matching parameters $\alpha$---an explicit expression for $\mathfrak{f}_S(\alpha)$ in terms of $\alpha$ and its corresponding matching word $\mathbf d$ (see Figure \ref{frequency_function_pic}).  Density of matching parameters in $[1,\beta]$, continuity of $\mathfrak{f}_S$ and equation (\ref{freq_T_and_freq_S}) then allow us to determine $\mathfrak{f}_S(\alpha)$ and $\mathfrak{f}_T(\alpha)$ for any $\alpha\in[1,\beta]$ as limits of these explicit expressions.  These expressions are then used in \S\ref{Maximal frequency of zero} to determine the maximal frequency of the digit $0$ occurring in generic $S_\alpha$- and $T_\alpha$-expansions, and it is shown that these maximal values are attained for $\alpha$ in the interval $[1/2+1/\beta,1+1/\beta^2]$.

Assume that $\alpha\in I_\mathbf d,\ \mathbf d\in\mathcal{M}$, with matching index $m:=m(\alpha)<\infty$, and recall the density $f_\alpha$ from equation (\ref{invariant_density_simplified}).  We first find an expression for the normalising constant $C$.  By symmetry of $S_\alpha$, 
\begin{align*}
1&=\nu_{\alpha}([-1,1])\\
&=\int_{-1}^1f_\alpha(x)d\lambda(x)\\
&=\frac2C\sum_{t=0}^{m-1}\int_{-1}^1\frac1{\beta^{t+1}}1_{[S_\alpha^t(1-\alpha),S_\alpha^t(1))}(x)d\lambda(x)\\
&=\frac2C\sum_{t=0}^{m-1}\frac1{\beta^{t+1}}\left(S_\alpha^t(1)-S_\alpha^t(1-\alpha)\right).
\end{align*}
Assume $\alpha<1+1/\beta^2$ and write
\[\mathbf d=d_1\cdots d_m=1\mathbf w_{i_1}\cdots \mathbf w_{i_n}(1-i_n/2).\]
For each $i\in{0,1,2}$, let $\ell(i)\in\{2,3\}$ denote the length of the block $\mathbf w_i$---explicitly, $\ell(0)=\ell(2)=2$ and $\ell(1)=3$---and let $p:=p_{\mathbf d}:\{1,\dots,n\}\to\{1,\dots,m-3\}$ be defined by $p(k)=1+\sum_{j=1}^{k-1}\ell(i_j)$ so that $\sigma^{p(k)}(\mathbf d)=\mathbf w_{i_k}\cdots \mathbf w_{i_n}(1-i_n/2)$.  Recall from Figure \ref{differences_graph} that $S_\alpha^0(1)-S_\alpha^0(1-\alpha)=\alpha,\ S_\alpha^{m-1}(1)-S_\alpha^{m-1}(1-\alpha)=\alpha/\beta$, and that the remaining differences $S_\alpha^t(1)-S_\alpha^t(1-\alpha)$ are determined by cycles of length two or three beginning at vertex $\alpha/\beta$.  In particular, if $i_k\in\{0,2\}$, then $S_\alpha^{p(k)}(1)-S_\alpha^{p(k)}(1-\alpha)=\alpha/\beta$ and $S_\alpha^{p(k)+1}(1)-S_\alpha^{p(k)+1}(1-\alpha)=\alpha$ give a cycle of length two, while if $i_k=1$, $S_\alpha^{p(k)}(1)-S_\alpha^{p(k)}(1-\alpha)=\alpha/\beta,\ S_\alpha^{p(k)+1}(1)-S_\alpha^{p(k)+1}(1-\alpha)=\alpha$ and $S_\alpha^{p(k)+2}(1)-S_\alpha^{p(k)+2}(1-\alpha)=\beta\alpha$ give a cycle of length three.  We find for each $k\in\{1,\dots,n\}$ that
\[\sum_{t=p(k)}^{p(k)+\ell(i_k)-1}\frac1{\beta^{t+1}}\left(S_\alpha^t(1)-S_\alpha^t(1-\alpha)\right)=\frac{\ell(i_k)}{\beta^{p(k)+2}}\alpha,\]
and thus
\begin{flalign}\label{normalizer_computation}
1&=\frac2C\sum_{t=0}^{m-1}\frac1{\beta^{t+1}}\left(S_\alpha^t(1)-S_\alpha^t(1-\alpha)\right)\nonumber\\
&=\frac2C\left(\frac\alpha\beta+\sum_{k=1}^n\sum_{t=p(k)}^{p(k)+\ell(i_k)-1}\frac1{\beta^{t+1}}\left(S_\alpha^t(1)-S_\alpha^t(1-\alpha)\right)+\frac\alpha{\beta^{m+1}}\right)\nonumber\\
&=\frac{2\alpha}C\left(\frac1\beta+\sum_{k=1}^n\frac{\ell(i_k)}{\beta^{p(k)+2}}+\frac1{\beta^{m+1}}\right).
\end{flalign}
Note that (\ref{normalizer_computation}) also holds for $\alpha>1+1/\beta^2$ (i.e. $\mathbf d=10$) with the summation over $k$ set to zero.  Define a substitution $\xi:\{\mathbf w_0,\mathbf w_1,\mathbf w_2\}\to\{02,030\}$ by $\xi(\mathbf w_0)=\xi(\mathbf w_2)=02$ and $\xi(\mathbf w_1)=030$, and let $\Xi:\mathcal{M}\to\{0,1,2,3\}^*$ be given by $\Xi(\mathbf d)=101$ if $\mathbf d=10$, and
\[\Xi(\mathbf d)=1\xi(\mathbf w_{i_1})\cdots\xi(\mathbf w_{i_n})01\]
if $\mathbf d=1\mathbf w_{i_1}\cdots \mathbf w_{i_n}(1-i_n/2)\in\mathcal{M}\backslash\{10\}$.  The left- and right-most sides of (\ref{normalizer_computation}) may be written more succinctly as $1=\frac{2\alpha}{C}v(\Xi(\mathbf d))$, and thus $C=2\alpha v(\Xi(\mathbf d))$.

Having found $C$, we are now in a position to determine $\mathfrak{f}_S(\alpha)$.  Again by symmetry of $S_\alpha$, 
\begin{align*}
\mathfrak{f}_S(\alpha)&=\nu_\alpha(J_0)\\
&=1-\nu_\alpha(J_{-1})-\nu_\alpha(J_1)\\
&=1-\int_{-1}^{-1/\beta}f_\alpha(x)d\lambda(x)-\int_{1/\beta}^{1}f_\alpha(x)d\lambda(x)\\
&=1-\frac2C\sum_{t=0}^{m-1}\left(\int_{-1}^{-1/\beta}\frac1{\beta^{t+1}}1_{[S_\alpha^t(1-\alpha),S_\alpha^t(1))}(x)d\lambda(x)+\int_{1/\beta}^{1}\frac1{\beta^{t+1}}1_{[S_\alpha^t(1-\alpha),S_\alpha^t(1))}(x)d\lambda(x)\right).
\end{align*}
Write $\mathbf e:=\varphi(\mathbf d)=e_1\cdots e_m$.  Since by Proposition \ref{difference_of_orbits}, $S_\alpha^t(1)\notin J_{-1}$ and $S_\alpha^t(1-\alpha)\notin J_1$ for $t<m$, the previous line may be rewritten as
\begin{align*}
\mathfrak{f}_S(\alpha)&=1-\frac2C\left(\sum_{\substack{0\le t\le m-1\\ e_{t+1}=-1}} \frac1{\beta^{t+1}}(-1/\beta-S_\alpha^t(1-\alpha))+\sum_{\substack{0\le t\le m-1\\ d_{t+1}=1}} \frac1{\beta^{t+1}}(S_\alpha^t(1)-1/\beta)\right)\\
&=1-\frac2C\left(\sum_{\substack{0\le t\le m-1\\ d_{t+1}=1}} \frac1{\beta^{t+1}}S_\alpha^t(1)-\sum_{\substack{0\le t\le m-1\\ e_{t+1}=-1}} \frac1{\beta^{t+1}}S_\alpha^t(1-\alpha)-1/\beta\right),
\end{align*}
where we have used Proposition \ref{property_M_difference} together with the facts that
\[\sum_{\substack{0\le t\le m-1\\ e_{t+1}=-1}}1/\beta^{t+1}=-v(\mathbf e)\ \ \ \ \ \text{and}\ \ \ \ \ \sum_{\substack{0\le t\le m-1\\ d_{t+1}=1}}1/\beta^{t+1}=v(\mathbf d).\] 
Let $\mathbf d_1^0=\mathbf e_1^0=\varepsilon$ be the empty word, and for $1\le t\le m-1$ set $\mathbf d_1^t:=d_1\cdots d_t$ and $\mathbf e_1^t:=e_1\cdots e_t$.  For each $0\le t\le m-1$, equation (\ref{S_alpha^k_eqn}) gives $S_\alpha^t(1)=\beta^t(1-\alpha v(\mathbf d_1^t))$ and $S_\alpha^t(1-\alpha)=\beta^t(1-\alpha-\alpha v(\mathbf e_1^t))$.  Setting 
\begin{equation}\label{mathfrak(n)}
\mathfrak n(\mathbf d):=\#\{1\le j\le m\ |\ d_j=1\}-\#\{1\le j\le m\ |\ e_j=-1\},
\end{equation}
the frequency function may be written as
\begin{align*}
\mathfrak{f}_S(\alpha)&=1-\frac2C\left(\sum_{\substack{0\le t\le m-1\\ d_{t+1}=1}} \frac1{\beta^{t+1}}\beta^t(1-\alpha v(\mathbf d_1^t))-\sum_{\substack{0\le t\le m-1\\ e_{t+1}=-1}} \frac1{\beta^{t+1}}\beta^t(1-\alpha-\alpha v(\mathbf e_1^t))-1/\beta\right)\\
&=1-\frac2{\beta C}\left(\sum_{\substack{0\le t\le m-1\\ d_{t+1}=1}}(1-\alpha v(\mathbf d_1^t))-\sum_{\substack{0\le t\le m-1\\ e_{t+1}=-1}} (1-\alpha-\alpha v(\mathbf e_1^t))-1\right)\\
&=1-\frac2{\beta C}\left(\mathfrak{n}(\mathbf d)-\alpha\left(\sum_{\substack{0\le t\le m-1\\ d_{t+1}=1}}v(\mathbf d_1^t)-\sum_{\substack{0\le t\le m-1\\ e_{t+1}=-1}} (1+v(\mathbf e_1^t))\right)-1\right).\\
\end{align*}
Letting
\[K_{\mathbf d}:=\sum_{\substack{0\le t\le m-1\\ d_{t+1}=1}}v(\mathbf d_1^t)-\sum_{\substack{0\le t\le m-1\\ e_{t+1}=-1}} (1+v(\mathbf e_1^t))\]
and recalling that $C=2\alpha v(\Xi(\mathbf d))$, we find
\begin{equation}\label{freq_func}
\mathfrak{f}_S(\alpha)=1-\frac1{\beta v(\Xi(\mathbf d))}\left(\frac{\mathfrak{n}(\mathbf d)-1}{\alpha}-K_{\mathbf d}\right).
\end{equation}

\begin{eg}\label{freq_eg}
Let $\mathbf d=1001$.  Then $\mathbf e=\overline{0010}$, so $\mathfrak{n}(\mathbf d)=1$.  Moreover,
\[v(\Xi(\mathbf d))=v(10201)=\frac1\beta+\frac2{\beta^3}+\frac1{\beta^5}\]
and 
\[K_{\mathbf d}=v(\varepsilon)+v(100)-(1-v(\overline{00}))=-\frac1{\beta^2}.\]
Thus for all $\alpha\in I_{1001}$, 
\[\mathfrak{f}_S(\alpha)=1-\frac{1}{\beta^3(1/\beta+2/\beta^3+1/\beta^5)}=4/5.\]
A similar calculation with $\mathbf d=1010$ reveals that $\mathfrak{f}_S(\alpha)=4/5$ also for all $\alpha\in I_{1010}$.
\end{eg}

Before turning toward the maximal frequency of the digit 0, we give an alternate expression for $K_\mathbf d$ which will be helpful below.  Note that the first summation in the definition of $K_{\mathbf d}$ may be rewritten as the sum of all $v(\mathbf d_1^t),\ 1\le t\le m,$ for which $d_t$=1, excluding the greatest such index $t$.  The second sum may be similarly rewritten (though an extra term $1$ appears from the first non-zero summand of the original sum).  Now suppose $\mathbf d\neq 10$.  Recalling that $\{d_{m-2}d_{m-1}d_m,\overline{e_{m-2}e_{m-1}e_m}\}=\{001,010\}$, we have
\begin{align*}
K_\mathbf{d}&=\sum_{\substack{1\le t\le m-3\\ d_{t}=1}}v(\mathbf d_1^t)-\left(1+\sum_{\substack{1\le t\le m-3\\ e_{t}=-1}} (1+v(\mathbf e_1^t))\right)\\
&=v(1)+\sum_{\substack{1\le k\le n-1\\ i_k\in\{1,2\}}}v(1\mathbf w_{i_1}\cdots \mathbf w_{i_k})-\left(1+\sum_{\substack{1\le k\le n-1\\ 2-i_k\in\{1,2\}}}(1-v(0\mathbf w_{2-i_1}\cdots \mathbf w_{2-i_k}))\right).
\end{align*}
Recall that $p(k+1),\ 1\le k\le n-1$, gives the power for which $\sigma^{p(k+1)}(\mathbf d)=\mathbf w_{i_{k+1}}\cdots \mathbf w_{i_n}(1-i_n/2)$; in particular, $p(k+1)$ equals the length of $1\mathbf w_{i_1}\cdots \mathbf w_{i_k}$.  By Lemma \ref{valuation_of_blocks}, 
\[v(1\mathbf w_{i_1}\cdots \mathbf w_{i_k})+v(0\mathbf w_{2-i_1}\cdots \mathbf w_{2-i_k})=\frac1\beta+\frac1\beta\left(\frac1\beta-\frac1{\beta^{p(k+1)}}\right)=1-1/\beta^{p(k+1)+1}.\]
Then 
\begin{align*}
K_\mathbf{d}&=\frac1\beta+\sum_{\substack{1\le k\le n-1\\ i_k\in\{1,2\}}}v(1\mathbf w_{i_1}\cdots \mathbf w_{i_k})-\left(1+\sum_{\substack{1\le k\le n-1\\ 2-i_k\in\{1,2\}}}\left(v(1\mathbf w_{i_1}\cdots \mathbf w_{i_k})+1/\beta^{p(k+1)+1}\right)\right)\\
&=-\frac1{\beta^2}+\sum_{\substack{1\le k\le n-1\\ i_k=2}}v(1\mathbf w_{i_1}\cdots \mathbf w_{i_k})-\sum_{\substack{1\le k\le n-1\\ i_k=0}}v(1\mathbf w_{i_1}\cdots \mathbf w_{i_k})-\sum_{\substack{1\le k\le n-1\\ 2-i_k\in\{1,2\}}}1/\beta^{p(k+1)+1}.
\end{align*}
The latter summation equals
\begin{align*}
\sum_{\substack{1\le k\le n-1\\ 2-i_k\in\{1,2\}}}1/\beta^{p(k+1)+1}&=\frac1\beta v(0\mathbf w_{2-i_1}\cdots \mathbf w_{2-i_{n-1}})\\
&=\frac1\beta\left(v(\overline{\mathbf e})-\frac1{\beta^{m-3}}v(\mathbf w_{2-i_n}i_n/2)\right)\\
&=\frac1\beta\left(1-v(\mathbf d)-\frac1{\beta^{m-3}}v(\mathbf w_{2-i_n}i_n/2)\right)\\
&=\frac1\beta\left(1-v(d_1\cdots d_{m-3})-\frac1{\beta^{m-3}}v(011)\right)\\
&=\frac1\beta-\frac1\beta v(d_1\cdots d_{m-3})-\frac1{\beta^{m-1}},
\end{align*}
and thus for $\mathbf d\in\mathcal{M}\backslash\{10\}$,
\begin{equation}\label{K_d}
K_\mathbf{d}=-1+\sum_{\substack{1\le k\le n-1\\ i_k=2}}v(1\mathbf w_{i_1}\cdots \mathbf w_{i_k})-\sum_{\substack{1\le k\le n-1\\ i_k=0}}v(1\mathbf w_{i_1}\cdots \mathbf w_{i_k})+\frac1\beta v(d_1\cdots d_{m-3})+\frac1{\beta^{m-1}}.
\end{equation}

%%%%%%%%%%
\subsection{Maximal frequency of zero}\label{Maximal frequency of zero}
%%%%%%%%%%

Here we prove that the frequency functions $\mathfrak{f}_S$ and $\mathfrak{f}_T$ attain their maximums on the (maximal) interval $[1/2+1/\beta,1+1/\beta^2]$.  We first need some preliminary results.  Note that by (\ref{freq_func}), on the matching interval $I_{\mathbf d}$ the frequency function $\mathfrak{f}_S$ is strictly increasing with $\alpha$ for $\mathfrak{n}(\mathbf d)>1$, strictly decreasing for $\mathfrak{n}(\mathbf d)<1$ and constant for $\mathfrak{n}(\mathbf d)=1$.  By (\ref{freq_T_and_freq_S}), the same monotonicity conditions hold for $\mathfrak{f}_T$.

The first of our preliminary results states that $\mathfrak{f}_S$ (and hence $\mathfrak{f}_T$) is constant on `cascade' intervals:

\begin{lem}\label{freq_func_on_cascades}
For each $\mathbf d\in \mathcal{M}_U$, we have $\mathfrak{n}(\psi(\mathbf d))=1$.  In particular, for each $\mathbf d\in\mathcal{M}_U$, the frequency function $\mathfrak{f}_S$ is constant on $[\lim_{n\to\infty}\alpha_{\psi^n(\mathbf d)}^-,\alpha_{\mathbf d}^-]$. 
\end{lem}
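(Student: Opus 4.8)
The plan is to prove the two assertions in turn: first the purely combinatorial identity $\mathfrak{n}(\psi(\mathbf d))=1$, and then to deduce that $\mathfrak{f}_S$ is constant on the entire cascade by iterating $\psi$ and invoking continuity together with the endpoint relation $\alpha_{\mathbf d}^-=\alpha_{\psi(\mathbf d)}^+$ from Proposition \ref{cacscades_preserve_prop_M}.

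For the identity I would first record a closed formula for $\mathfrak{n}$ in terms of the block decomposition. Writing $\mathbf d=1\mathbf w_{i_1}\cdots\mathbf w_{i_n}(1-i_n/2)$ and $\mathbf e=\varphi(\mathbf d)$, a direct count of the $1$'s in $\mathbf d$ against the $-1$'s in $\mathbf e$---using that $\mathbf w_0$ carries no nonzero digit while $\mathbf w_1,\mathbf w_2$ each carry exactly one, and that the appended terminal digit of $\mathbf d$ is a $1$ exactly when $i_n=0$ whereas that of $\overline{\mathbf e}$ is a $1$ exactly when $i_n=2$---should give
\[\mathfrak{n}(\mathbf d)=1+\sum_{k=1}^{n}\varepsilon(i_k)-\varepsilon(i_n)=1+\sum_{k=1}^{n-1}\varepsilon(i_k),\qquad \varepsilon(0)=-1,\ \varepsilon(1)=0,\ \varepsilon(2)=1.\]
The only delicate point is the bookkeeping of the terminal digit, which produces the correction $-\varepsilon(i_n)$ and thereby removes the last block from the sum.

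With this formula the identity reduces to substituting the block form of $\psi(\mathbf d)$. When $d_m=0$ (so $i_n=2$) the proof of Proposition \ref{cacscades_preserve_prop_M} already supplies $\psi(\mathbf d)=1\mathbf w_{i_1}\cdots\mathbf w_{i_n}\mathbf w_0\mathbf w_{2-i_1}\cdots\mathbf w_{2-i_n}1$, with block indices $(i_1,\dots,i_n,0,2-i_1,\dots,2-i_n)$; feeding these into the formula and using $\varepsilon(2-i)=-\varepsilon(i)$ and $\varepsilon(0)=-1$ telescopes the sum to $\varepsilon(i_n)-1=0$. When $d_m=1$ (so $i_n=0$) I must first derive the block form myself, since that case is only declared ``similar'' there: one finds $\psi(\mathbf d)=1\mathbf w_{i_1}\cdots\mathbf w_{i_n}1\,\mathbf w_{2-i_1}\cdots\mathbf w_{2-i_n}0$, and the regrouping $\mathbf w_{i_n}1=\mathbf w_0 1=\mathbf w_1$ exhibits the block indices as $(i_1,\dots,i_{n-1},1,2-i_1,\dots,2-i_n)$; the same cancellation, now with the central term $\varepsilon(1)=0$, again yields sum $0$. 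Thus $\mathfrak{n}(\psi(\mathbf d))=1$ in both cases. I expect this regrouping---verifying the admissible block form of $\psi(\mathbf d)$ in the $d_m=1$ case---to be the main obstacle, as it is the one ingredient not handed to us by Proposition \ref{cacscades_preserve_prop_M}.

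For the ``in particular'' statement I would iterate. Since $\psi(\mathcal{M}_U)\subset\mathcal{M}_U$ (Proposition \ref{cacscades_preserve_prop_M}), each $\psi^n(\mathbf d)$ with $n\ge 1$ lies in $\mathcal{M}_U$, so the first part gives $\mathfrak{n}(\psi^n(\mathbf d))=1$; by the monotonicity criterion following (\ref{freq_func}), $\mathfrak{f}_S$ is then constant on every open matching interval $I_{\psi^n(\mathbf d)}$. The relation $\alpha_{\psi^{n}(\mathbf d)}^{-}=\alpha_{\psi^{n+1}(\mathbf d)}^{+}$ shows these intervals are adjacent, with $I_{\psi^{n+1}(\mathbf d)}=(\alpha_{\psi^{n+1}(\mathbf d)}^{-},\alpha_{\psi^{n}(\mathbf d)}^{-})$ nonempty (Proposition \ref{I_delta_nonempty_etc}); hence the left endpoints strictly decrease to a limit and the closures of the $I_{\psi^n(\mathbf d)}$, $n\ge1$, cover $(\lim_{n\to\infty}\alpha_{\psi^n(\mathbf d)}^-,\alpha_{\mathbf d}^-]$. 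Finally, continuity of $\mathfrak{f}_S$ (Proposition \ref{freq_funcs_continuous}) forces the constant values on consecutive intervals to coincide at each shared endpoint, and to coincide with the value at the limit point; therefore $\mathfrak{f}_S$ is constant on the closed interval $[\lim_{n\to\infty}\alpha_{\psi^n(\mathbf d)}^-,\alpha_{\mathbf d}^-]$, as claimed.
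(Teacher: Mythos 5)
Your proof is correct and follows essentially the same route as the paper's: both reduce the identity $\mathfrak{n}(\psi(\mathbf d))=1$ to a direct count of the nonzero digits using the explicit block decomposition of $\psi(\mathbf d)$ (including the regrouping $\mathbf w_{i_n}1=\mathbf w_0 1=\mathbf w_1$ in the $d_m=1$ case), and both deduce the second assertion from Proposition \ref{cacscades_preserve_prop_M} together with continuity of $\mathfrak{f}_S$. The only cosmetic difference is that you package the count via the formula $\mathfrak{n}(\mathbf d)=1+\#\{k\le n-1\ |\ i_k=2\}-\#\{k\le n-1\ |\ i_k=0\}$ (which the paper records later, in the proof of Lemma \ref{freq_func_bdd_lem}), whereas the paper computes $\varphi(\psi(\mathbf d))$ explicitly as $e_1\cdots e_{m-1}0\overline{\mathbf d}$ (resp. $e_1\cdots e_{m-2}0\overline{\mathbf d}$) and compares digit counts directly.
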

\begin{proof}
It suffices to prove the first statement; the second follows immediately from this, Proposition \ref{cacscades_preserve_prop_M} and continuity of $\mathfrak{f}_S$.  Write 
\[\mathbf d=d_1\cdots d_m=1\mathbf w_{i_1}\cdots \mathbf w_{i_n}(1-i_n/2)\ \ \ \text{and}\ \ \ \mathbf e:=\varphi(\mathbf d)=e_1\cdots e_m=\overline{0\mathbf w_{2-i_1}\cdots \mathbf w_{2-i_n}(i_n/2)}.\]
Observe that
\begin{align*}
\mathbf d':=\psi(\mathbf d)&=\begin{cases}
\mathbf d\overline{\mathbf e}, & d_m=0\\
\mathbf d\overline{e_2\cdots e_m}, & d_m=1
\end{cases}\\
&=\begin{cases}
1\mathbf w_{i_1}\cdots \mathbf w_{i_n}00\mathbf w_{2-i_1}\cdots \mathbf w_{2-i_n}(i_n/2), & d_m=0\\
1\mathbf w_{i_1}\cdots \mathbf w_{i_{n-1}}001\mathbf w_{2-i_1}\cdots \mathbf w_{2-i_n}(i_n/2), & d_m=1
\end{cases}\\
&=\begin{cases}
1\mathbf w_{i_1}\cdots \mathbf w_{i_n}\mathbf w_0\mathbf w_{2-i_1}\cdots \mathbf w_{2-i_n}(i_n/2), & d_m=0\\
1\mathbf w_{i_1}\cdots \mathbf w_{i_{n-1}}\mathbf w_1\mathbf w_{2-i_1}\cdots \mathbf w_{2-i_n}(i_n/2), & d_m=1
\end{cases},
\end{align*}
so
\begin{align*}
\mathbf e':=\varphi(\mathbf d')&=\begin{cases}
\overline{0\mathbf w_{2-i_1}\cdots \mathbf w_{2-i_n}\mathbf w_2\mathbf w_{i_1}\cdots \mathbf w_{i_n}(1-i_n/2)}, & d_m=0\\
\overline{0\mathbf w_{2-i_1}\cdots \mathbf w_{2-i_{n-1}}\mathbf w_1\mathbf w_{i_1}\cdots \mathbf w_{i_n}(1-i_n/2)}, & d_m=1
\end{cases}\\
&=\begin{cases}
e_1\cdots e_{m-1}0\overline{\mathbf d}, & d_m=0\\
e_1\cdots e_{m-2}0\overline{\mathbf d}, & d_m=1\\
\end{cases}.
\end{align*}
Recall that if $d_m=0$, then $\overline{e_m}=1$.  In this case $\mathbf d'$ has exactly one more digit $1$ than does $\overline{\mathbf e'}$.  If $d_m=1$, then $\overline{e_{m-1}e_{m}}=10$.  Since $e_1=0$, we see that in this case, too, $\mathbf d'$ has exactly one more digit $1$ than does $\overline{\mathbf e'}$.  Thus in both cases $\mathfrak{n}(\mathbf d')=1$.
\end{proof}

We make note here of some computations which will be useful below.  Let $c,\ell\in\mathbb{Z}$ with $\ell\ge 0$:
\begin{flalign}
v((0c)^\ell)&=c\sum_{j=1}^\ell 1/\beta^{2j}=\frac c{\beta^2}\cdot\frac{1-1/\beta^{2\ell}}{1-1/\beta^2}=\frac c\beta(1-1/\beta^{2\ell})\label{comp1}\\
v((00c)^\ell)&=c\sum_{j=1}^\ell 1/\beta^{3j}=\frac {c}{\beta^3}\cdot\frac{1-1/\beta^{3\ell}}{1-1/\beta^3}=\frac c{2\beta}(1-1/\beta^{3\ell}) \label{comp2}\\
v((0c0)^\ell)&=\beta v((00c)^\ell)=\frac c2(1-1/\beta^{3\ell}) \label{comp3}\\
v((000c)^\ell)&=c\sum_{j=1}^\ell 1/\beta^{4j}=\frac c{\beta^4}\frac{1-1/\beta^{4\ell}}{1-1/\beta^4}=\frac c{\beta(\beta^2+1)}(1-1/\beta^{4\ell})
\label{comp4}\\
v((0c00)^\ell)&=\beta^2 v((000c)^\ell)=\frac{c\beta}{\beta^2+1}(1-1/\beta^{4\ell}).
\label{comp5}
\end{flalign}

\begin{lem}\label{freq_func_bdd_lem}
If $\alpha\in I_\mathbf{d}$ for some $\mathbf d\in\mathcal{M}$ with $\mathfrak{n}(\mathbf{d})=1$, then $\mathfrak{f}_S(\alpha)\le 4/5$.  Moreover, equality holds if and only if $\mathbf d\prec 1(\mathbf w_2\mathbf w_0)^\infty$.
\end{lem}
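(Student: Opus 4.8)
The plan is to reduce the whole statement to a single algebraic inequality in the matching word $\mathbf d$ and then to read off both the bound and the equality case from the block structure. Since $\mathfrak n(\mathbf d)=1$, the term $(\mathfrak n(\mathbf d)-1)/\alpha$ in (\ref{freq_func}) vanishes, so $\mathfrak f_S$ is constant on $I_{\mathbf d}$ with
\[
\mathfrak f_S(\alpha)=1+\frac{K_{\mathbf d}}{\beta\,v(\Xi(\mathbf d))}.
\]
Because $\beta\,v(\Xi(\mathbf d))>0$, the desired bound $\mathfrak f_S(\alpha)\le 4/5$ is equivalent to $5K_{\mathbf d}+\beta\,v(\Xi(\mathbf d))\le 0$, and $\mathfrak f_S(\alpha)=4/5$ precisely when equality holds. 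First I would dispose of the exceptional word $\mathbf d=1001$ (the word $\mathbf d=10$ has $\mathfrak n(\mathbf d)=0$ and is excluded): by Example \ref{freq_eg} we have $\mathfrak f_S=4/5$ there, and indeed $1001\prec 1(\mathbf w_2\mathbf w_0)^\infty$, so the claim is consistent. It then remains to treat $\mathbf d\in\mathcal M_U$.

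Next I would compute $5K_{\mathbf d}+\beta\,v(\Xi(\mathbf d))$ explicitly in terms of the block indices $i_1,\dots,i_n$. For the second summand I would use the closed expression extracted from (\ref{normalizer_computation}), namely $\beta\,v(\Xi(\mathbf d))=1+\sum_{k=1}^n\ell(i_k)\beta^{-(p(k)+1)}+\beta^{-m}$; for $K_{\mathbf d}$ I would use the block form (\ref{K_d}) together with the identity $v(1\mathbf w_{i_1}\cdots\mathbf w_{i_k})=1/\beta+\sum_{j=1}^k\beta^{-p(j)}v(\mathbf w_{i_j})$ and the observation that $d_1\cdots d_{m-3}=1\mathbf w_{i_1}\cdots\mathbf w_{i_{n-1}}$. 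Substituting these and collecting terms by means of Lemma \ref{valuation_of_blocks} and the geometric sums (\ref{comp1})--(\ref{comp5}), the constant $-4$ coming from $5\cdot(-1)$ should be exactly absorbed, and the whole expression should telescope; I expect that the signed sum $\sum_{k\le n-1}\epsilon_k v(1\mathbf w_{i_1}\cdots\mathbf w_{i_k})$ (with $\epsilon_k=+1,0,-1$ for $i_k=2,1,0$) collapses whenever the indices follow the alternating pattern $(2,0,2,0,\dots)$, leaving a residue controlled by the positions where $(i_k)$ \emph{exceeds} that pattern. (The $\beta$-algebra here is exactly the Fibonacci-type relations $\beta^2=\beta+1$, $1/\beta+1/\beta^2=1$; e.g.\ $7/\beta^2+5/\beta^3+1/\beta^4=4$, which is what makes the $1010$ and $10100001$ cases equal $4/5$ on the nose.)

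Finally I would translate the sign of the residue into the lexicographic condition. Since every $\mathbf w_i$ begins with $0$, the blocks are ordered $\mathbf w_0\prec\mathbf w_1\prec\mathbf w_2$ \emph{at their first distinguishing position} no matter what follows, so comparison of $\mathbf d$ with $\mathbf g:=1(\mathbf w_2\mathbf w_0)^\infty$ reduces to comparison of the index sequence $(i_1,i_2,\dots)$ with $(2,0,2,0,\dots)$. Here the hypothesis $\mathfrak n(\mathbf d)=1$ is used crucially: it forces the number of $\mathbf w_2$-blocks and $\mathbf w_0$-blocks to differ by exactly one, which rules out the borderline case in which $(i_k)$ agrees with the pattern but terminates in a $\mathbf w_0$ (such a word would satisfy $\mathbf d\succ\mathbf g$). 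With that case excluded, one checks that $5K_{\mathbf d}+\beta\,v(\Xi(\mathbf d))=0$ exactly when $(i_k)$ never strictly exceeds the alternating pattern before $\mathbf d$ ends, i.e.\ exactly when $\mathbf d\prec\mathbf g$, and is strictly negative otherwise; this gives both $\mathfrak f_S\le 4/5$ and the stated equality characterisation.

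The main obstacle is the middle step: organising the double summation so that the $\beta$-arithmetic produces a manifestly sign-definite closed form, and in particular confirming that the contributions vanish precisely on the pattern $(\mathbf w_2\mathbf w_0)^\infty$ rather than on some nearby word. I would expect to split into the two cases $i_n=2$ and $i_n=0$ (equivalently $d_m=0$ and $d_m=1$, where $\{d_{m-2}d_{m-1}d_m\}=\{010,001\}$), since the final digit and the form of $\varphi(\mathbf d)$ differ between them, and to keep careful track of how the terminal three digits interact with the telescoping. The lexicographic translation in the last step is then routine given the block-ordering observation, but it relies on having pinned down the residue exactly.
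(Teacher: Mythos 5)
Your reduction is the right one and matches the paper's: with $\mathfrak n(\mathbf d)=1$ the frequency is constant on $I_{\mathbf d}$ and equals $1+K_{\mathbf d}/\beta v(\Xi(\mathbf d))$, so everything hinges on showing $\beta v(\Xi(\mathbf d))+5K_{\mathbf d}\le 0$ with equality if and only if $\mathbf d\prec 1(\mathbf w_2\mathbf w_0)^\infty$; your treatment of the exceptional word $1001$ is also fine. But the middle step, which you yourself flag as the main obstacle and leave as an expectation, is where the proposal breaks down, and it breaks down in a specific way: the quantity $\beta v(\Xi(\mathbf d))+5K_{\mathbf d}$ does \emph{not} telescope by $\beta$-arithmetic alone to a sign-definite residue indexed by the places where $(i_k)$ exceeds the pattern $(2,0,2,0,\dots)$. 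After the telescoping that is genuinely available (the $\mathbf w_1$-blocks and the boundary terms do absorb the constant, as you predict), one is left with an inequality of the shape (\ref{claim3}): a sum attached to the ``excess'' leading $\mathbf w_2$'s of each maximal alternating run inside the $\mathbf w_0/\mathbf w_2$ stretches must be dominated by a sum attached to the ``excess'' leading $\mathbf w_0$'s of other runs. Neither side is individually sign-definite, and the two sides sit at different positions in the word, so no local cancellation finishes the job.

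The paper's resolution needs two ingredients your sketch does not supply. First, $\mathbf d$ is decomposed so that each $\mathbf w_0/\mathbf w_2$ stretch is written as $\mathbf w_2^{n_{s,1}}(\mathbf w_2\mathbf w_0)^{n_{s,2}}\mathbf w_0^{n_{s,3}}(\mathbf w_0\mathbf w_2)^{n_{s,4}}\cdots$ with minimal total exponent; minimality yields the extremality statements $(\mathbf w_2\mathbf w_0)^{n}\cdots\succ(\mathbf w_0\mathbf w_2)^\infty$ and $(\mathbf w_0\mathbf w_2)^{n}\cdots\prec(\mathbf w_2\mathbf w_0)^\infty$, from which each excess-$\mathbf w_2$ summand is bounded above by a multiple of $v(\mathbf d)$ and each excess-$\mathbf w_0$ summand is bounded below by the corresponding multiple of $v(\mathbf d)$, with equality exactly when the multiplicity is zero. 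Second, and decisively, the hypothesis $\mathfrak n(\mathbf d)=1$ is converted into the counting identity $\sum n_{s,4j-3}=\sum n_{s,4j-1}$ of (\ref{sum_ns_equal}) --- the total excess of leading $\mathbf w_2$'s equals the total excess of leading $\mathbf w_0$'s --- which is what allows the two one-sided bounds to be chained through the common quantity $v(\mathbf d)\sum n_{s,4j-3}$. In your outline $\mathfrak n(\mathbf d)=1$ is invoked only to exclude a borderline word in the final lexicographic translation; that is not where the hypothesis does its work, and without the counting identity the inequality cannot be closed. (Your block-ordering observation for the last step is sound, but the equality case has to be characterised as ``all excess multiplicities vanish,'' which again presupposes the run-decomposition rather than a closed-form residue.)
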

\begin{proof}
Note that $\mathfrak{n}(10)=0$, so we may assume $\mathbf d\succ 10$.  That $\mathfrak{f}_S(\alpha)=4/5$ for all $\alpha\in I_{1010}\cup I_{1001}$ was shown in Example \ref{freq_eg}.  Thus we may assume that $\mathbf d\succ 1010$.  Write
\[\mathbf d=d_1\cdots d_m=1\mathbf w_{i_1}\cdots \mathbf w_{i_n}(1-i_n/2)=1\mathbf X_1\mathbf Y_1\cdots \mathbf X_t\mathbf Y_t\mathbf w_{i_n}(1-i_n/2),\]
where each $\mathbf X_s$ and $\mathbf Y_s$, $1\le s\le t$, consists solely of $\mathbf w_{2i}$'s and $\mathbf w_1$'s, respectively, and each $\mathbf X_s,\ \mathbf Y_s\neq\varepsilon$ except possibly $\mathbf Y_t$.  Let $\ell_{2s-1}:=\frac12\text{len}(\mathbf X_s)$ and $\ell_{2s}:=\frac13\text{len}(\mathbf Y_s)$ denote the number of blocks $\mathbf w_i$ in $\mathbf X_s$ and $\mathbf Y_s$, respectively, and set $\ell_j:=0$ for $j>2t$.  Analogous to the function $p=p_{\mathbf d}$ defined in \S\ref{Frequencies of digits}, set $p_1:=1$ and for each $s\ge 1$, let $p_{2s}:=p_{2s-1}+2\ell_{2s-1}$ and $p_{2s+1}:=p_{2s}+3\ell_{2s}$; note, then, that
\[\sigma^{p_{2s-1}}(\mathbf d)=\mathbf X_s\mathbf Y_s\cdots \mathbf X_t\mathbf Y_t\mathbf w_{i_n}(1-i_n/2)\ \ \ \ \ \text{and}\ \ \ \ \ \sigma^{p_{2s}}(\mathbf d)=\mathbf Y_s\mathbf X_{s+1}\cdots \mathbf X_t\mathbf Y_t\mathbf w_{i_n}(1-i_n/2).\]
Let $k_{2s-1},k_{2s}\in\{1,\dots, n\}$ be the indices for which 
\[\sigma^{p_{2s-1}}(\mathbf d)=\mathbf w_{i_{k_{2s-1}}}\cdots \mathbf w_{i_{n-1}}\mathbf w_{i_n}(1-i_n/2)\ \ \ \ \ \text{and}\ \ \ \ \ \sigma^{p_{2s}}(\mathbf d)=\mathbf w_{i_{k_{2s}}}\cdots \mathbf w_{i_{n-1}}\mathbf w_{i_n}(1-i_n/2).\]
Using (\ref{comp1}) and (\ref{comp3}), we compute
\begin{align*}
v(\Xi(\mathbf d))&=v(1(02)^{\ell_1}(030)^{\ell_2}\cdots (02)^{\ell_{2t-1}}(030)^{\ell_{2t}}0201)\\
&=\frac1\beta+\sum_{s=1}^t\left(\frac1{\beta^{p_{2s-1}}}v((02)^{\ell_{2s-1}})+\frac1{\beta^{p_{2s}}}v((030)^{\ell_{2s}})\right)+\frac1{\beta^{m-3}}v(0201)\\
&=\frac1\beta+\sum_{s=1}^t\left(\frac{2}{\beta^{p_{2s-1}+1}}(1-1/\beta^{2\ell_{2s-1}})+\frac3{2\beta^{p_{2s}}}(1-1/\beta^{3\ell_{2s}})\right)+\frac1{\beta^{m-3}}(2/\beta^2+1/\beta^4).
\end{align*}
Moreover, (\ref{comp2}) gives
\begin{align*}
v(d_1\cdots d_{m-3})&=\frac1\beta+\sum_{s=1}^t\left(\frac1{\beta^{p_{2s-1}}}v(\mathbf X_s)+\frac1{\beta^{p_{2s}}}v(\mathbf Y_s)\right)\\
&=\frac1\beta+\sum_{s=1}^t\left(\frac1{\beta^{p_{2s-1}}}v(\mathbf X_s)+\frac1{\beta^{p_{2s}}}v((001)^{\ell_{2s}})\right)\\
&=\frac1\beta+\sum_{s=1}^t\left(\frac1{\beta^{p_{2s-1}}}v(\mathbf X_s)+\frac1{2\beta^{p_{2s}+1}}(1-1/\beta^{3\ell_{2s}})\right),
\end{align*}
so equation (\ref{K_d}) becomes
\begin{align*}
K_\mathbf{d}=&-\frac1\beta+\sum_{\substack{1\le k\le n-1\\ i_k=2}}v(1\mathbf w_{i_1}\cdots \mathbf w_{i_k})-\sum_{\substack{1\le k\le n-1\\ i_k=0}}v(1\mathbf w_{i_1}\cdots \mathbf w_{i_k})\\
&+\sum_{s=1}^t\left(\frac1{\beta^{p_{2s-1}+1}}v(\mathbf X_s)+\frac1{2\beta^{p_{2s}+2}}(1-1/\beta^{3\ell_{2s}})\right)+\frac1{\beta^{m-1}}.
\end{align*}
Then 
\begin{align*}
\beta v(\Xi(\mathbf d))+5K_\mathbf{d}=&1+\sum_{s=1}^t\left(\frac{2}{\beta^{p_{2s-1}}}(1-1/\beta^{2\ell_{2s-1}})+\frac{3\beta}{2\beta^{p_{2s}}}(1-1/\beta^{3\ell_{2s}})\right)+\frac1{\beta^{m-3}}(2/\beta+1/\beta^3)\\
&-\frac5\beta+5\left(\sum_{\substack{1\le k\le n-1\\ i_k=2}}v(1\mathbf w_{i_1}\cdots \mathbf w_{i_k})-\sum_{\substack{1\le k\le n-1\\ i_k=0}}v(1\mathbf w_{i_1}\cdots \mathbf w_{i_k})\right)\\
&+5\sum_{s=1}^t\left(\frac1{\beta^{p_{2s-1}+1}}v(\mathbf X_s)+\frac1{2\beta^{p_{2s}+2}}(1-1/\beta^{3\ell_{2s}})\right)+\frac5{\beta^{m-1}}\\
=&1-\frac5\beta+\sum_{s=1}^t\frac{1}{\beta^{p_{2s}}}\left(3\beta/2+5/2\beta^2\right)(1-1/\beta^{3\ell_{2s}})+\frac1{\beta^{m-3}}(2/\beta+5/\beta^2+1/\beta^3)\\
&+\sum_{s=1}^t\left(\frac{2}{\beta^{p_{2s-1}}}(1-1/\beta^{2\ell_{2s-1}})+\frac5{\beta^{p_{2s-1}+1}}v(\mathbf X_s)\right)\\
&+5\left(\sum_{\substack{1\le k\le n-1\\ i_k=2}}v(1\mathbf w_{i_1}\cdots \mathbf w_{i_k})-\sum_{\substack{1\le k\le n-1\\ i_k=0}}v(1\mathbf w_{i_1}\cdots \mathbf w_{i_k})\right).
\end{align*}
One easily verifies that both $3\beta/2+5/2\beta^2$ and $2/\beta+5/\beta^2+1/\beta^3$ equal $c:=5-\beta$.  We claim that it suffices to show that 
\begin{flalign}\label{claim}
&\sum_{s=1}^t\left(\frac{2}{\beta^{p_{2s-1}}}(1-1/\beta^{2\ell_{2s-1}})+\frac5{\beta^{p_{2s-1}+1}}v(\mathbf X_s)\right)\\
&+5\left(\sum_{\substack{1\le k\le n-1\\ i_k=2}}v(1\mathbf w_{i_1}\cdots \mathbf w_{i_k})-\sum_{\substack{1\le k\le n-1\\ i_k=0}}v(1\mathbf w_{i_1}\cdots \mathbf w_{i_k})\right)\nonumber\\
\le&\sum_{s=1}^t\frac c{\beta^{p_{2s-1}}}(1-1/\beta^{2\ell_{2s-1}}),\nonumber
\end{flalign}
with equality if and only if $\mathbf d\prec 1(\mathbf w_2\mathbf w_0)^\infty$.  Indeed, suppose the claim holds.  Then the computation above becomes
\begin{align*}
\beta v(\Xi(\mathbf d))+5K_\mathbf{d}&\le 1-\frac5\beta+c\sum_{s=1}^t\left(\frac1{\beta^{p_{2s-1}}}(1-1/\beta^{2\ell_{2s-1}})+\frac1{\beta^{p_{2s}}}(1-1/\beta^{3\ell_{2s}})\right)+\frac c{\beta^{m-3}}\\
&=1-\frac5\beta+c\sum_{s=1}^t\left(1/\beta^{p_{2s-1}}-1/\beta^{p_{2s}}+1/\beta^{p_{2s}}-1/\beta^{p_{2s+1}}\right)+\frac c{\beta^{m-3}}\\
&=1-\frac5\beta+c(1/\beta-1/\beta^{m-3})+\frac c{\beta^{m-3}}\\
&=1-\frac5\beta+\frac c\beta\\
&=0
\end{align*}
with equality if and only if $\mathbf d\prec 1(\mathbf w_2\mathbf w_0)^\infty$.  Rearranging, this inequality is equivalent to $ K_\mathbf{d}/\beta v(\Xi(\mathbf d))\le -1/5$.  From (\ref{freq_func}) and the assumption that $\mathfrak{n}(\mathbf{d})=1$, this gives
\[\mathfrak{f}_S(\alpha)=1+K_\mathbf{d}/\beta v(\Xi(\mathbf d))\le 4/5\]
with equality if and only if $\mathbf d\prec 1(\mathbf w_2\mathbf w_0)^\infty$, as desired.

It remains to show the claim from (\ref{claim}).  The constant $c$ defined above may be rewritten as $c=2+5/(\beta^2+1)$.  Subtracting $\sum_{s=1}^t(2/\beta^{p_{2s-1}})(1-1/\beta^{2\ell_{2s-1}})$ from both sides, dividing by $5$ and noting that $i_k\in\{0,2\}$ only when $k_{2s-1}\le k<k_{2s},\ 1\le s\le t$, equation (\ref{claim}) becomes
\begin{flalign}\label{claim2}
&\sum_{s=1}^t\left(\frac1{\beta^{p_{2s-1}+1}}v(\mathbf X_s)+\sum_{\substack{k_{2s-1}\le k<k_{2s}\\ i_k=2}}v(1\mathbf w_{i_1}\cdots \mathbf w_{i_k})-\sum_{\substack{k_{2s-1}\le k<k_{2s}\\ i_k=0}}v(1\mathbf w_{i_1}\cdots \mathbf w_{i_k})\right)\\
\le&\frac1{\beta^2+1}\sum_{s=1}^t\frac 1{\beta^{p_{2s-1}}}(1-1/\beta^{2\ell_{2s-1}}).\nonumber
\end{flalign}

Fix $1\le s\le t$, and write
\begin{equation}\label{X_s}
\mathbf X_s:=\mathbf w_2^{n_{s,1}}(\mathbf w_2\mathbf w_0)^{n_{s,2}}\mathbf w_0^{n_{s,3}}(\mathbf w_0\mathbf w_2)^{n_{s,4}}\cdots \mathbf w_2^{n_{s,4r_s-3}}(\mathbf w_2\mathbf w_0)^{n_{s,4r_s-2}}\mathbf w_0^{n_{s,4r_s-1}}(\mathbf w_0\mathbf w_2)^{n_{s,4r_s}},
\end{equation}
where the powers $n_{s,\ell}\ge 0$ are chosen so that $\sum_{\ell=1}^{4r_s}n_{s,\ell}$ is minimal and no three consecutive $n_{s,\ell}$ are zero except possibly the first or final three $n_{s,\ell}$.  Set $p_{s,1}:=p_{2s-1}$ and for each $1\le j\le r_s$,
\begin{alignat*}{2}
p_{s,4j-2}&:=p_{s,4j-3}+2n_{s,4j-3},\ \ \ \ \ \ \ \ & p_{s,4j-1}&:=p_{s,4j-2}+4n_{s,4j-2},\\
p_{s,4j}&:=p_{s,4j-1}+2n_{s,4j-1},\ \ \ \ \ \ \ \ & p_{s,4j+1}&:=p_{s,4j}+4n_{s,4j}.
\end{alignat*}
Note that with these definitions, $p_{s,4r_s+1}=p_{2s}$.  Equations (\ref{comp1})--(\ref{comp5}) give
\begin{align*}
\frac1{\beta^{p_{2s-1}+1}}v(\mathbf X_s)=&\frac1\beta\sum_{j=1}^{r_s}\bigg(\frac1{\beta^{p_{s,4j-3}}}v(\mathbf w_2^{n_{s,4j-3}})+\frac1{\beta^{p_{s,4j-2}}}v((\mathbf w_2\mathbf w_0)^{n_{s,4j-2}})\\
&+\frac1{\beta^{p_{s,4j-1}}}v(\mathbf w_0^{n_{s,4j-1}})+\frac1{\beta^{p_{s,4j}}}v((\mathbf w_0\mathbf w_2)^{n_{s,4j}})\bigg)\\
=&\sum_{j=1}^{r_s}\bigg(\frac1{\beta^{p_{s,4j-3}}}\frac1{\beta^2}(1-1/\beta^{2n_{s,4j-3}})+\frac1{\beta^{p_{s,4j-2}}}\frac1{\beta^2+1}(1-1/\beta^{4n_{s,4j-2}})\\
&+\frac1{\beta^{p_{s,4j}}}\frac1{\beta^2(\beta^2+1)}(1-1/\beta^{4n_{s,4j}})\bigg)
\end{align*}
and
\begin{align*}
&\sum_{\substack{k_{2s-1}\le k<k_{2s}\\ i_k=2}}v(1\mathbf w_{i_1}\cdots \mathbf w_{i_k})-\sum_{\substack{k_{2s-1}\le k<k_{2s}\\ i_k=0}}v(1\mathbf w_{i_1}\cdots \mathbf w_{i_k})\\
=&\sum_{j=1}^{r_s}\bigg(\sum_{\ell=1}^{n_{s,4j-3}}v(1\mathbf X_1\mathbf Y_1\cdots \mathbf X_{s-1}\mathbf Y_{s-1}\mathbf w_2^{n_{s,1}}\cdots (\mathbf w_0\mathbf w_2)^{n_{s,4j-4}}\mathbf w_2^\ell)\\
&-\sum_{\ell=1}^{n_{s,4j-1}}v(1\mathbf X_1\mathbf Y_1\cdots \mathbf X_{s-1}\mathbf Y_{s-1}\mathbf w_2^{n_{s,1}}\cdots (\mathbf w_2\mathbf w_0)^{n_{s,4j-2}}\mathbf w_0^\ell)\\
&+v(1\mathbf X_1\mathbf Y_1\cdots \mathbf X_{s-1}\mathbf Y_{s-1}\mathbf w_2^{n_{s,1}}\cdots (\mathbf w_0\mathbf w_2)^{n_{s,4j}})\\
&-v(1\mathbf X_1\mathbf Y_1\cdots \mathbf X_{s-1}\mathbf Y_{s-1}\mathbf w_2^{n_{s,1}}\cdots (\mathbf w_2\mathbf w_0)^{n_{s,4j-2}}\mathbf w_0^{n_{s,4j-1}}\mathbf w_0)\bigg)\\
=&\sum_{j=1}^{r_s}\bigg(\sum_{\ell=1}^{n_{s,4j-3}}v(1\mathbf X_1\mathbf Y_1\cdots \mathbf X_{s-1}\mathbf Y_{s-1}\mathbf w_2^{n_{s,1}}\cdots (\mathbf w_0\mathbf w_2)^{n_{s,4j-4}}\mathbf w_2^\ell)\\
&-n_{s,4j-1}v(1\mathbf X_1\mathbf Y_1\cdots \mathbf X_{s-1}\mathbf Y_{s-1}\mathbf w_2^{n_{s,1}}\cdots (\mathbf w_2\mathbf w_0)^{n_{s,4j-2}})\\
&+\frac{1}{\beta^{p_{s,4j}}}\frac1{\beta(\beta^2+1)}(1-1/\beta^{4n_{s,4j}})\bigg).
\end{align*}
Thus the left-hand side of (\ref{claim2}) equals
\begin{align*}&\sum_{s=1}^t\sum_{j=1}^{r_s}\bigg(\frac1{\beta^{p_{s,4j-3}}}\frac1{\beta^2}(1-1/\beta^{2n_{s,4j-3}})+\frac1{\beta^{p_{s,4j-2}}}\frac1{\beta^2+1}(1-1/\beta^{4n_{s,4j-2}})+\frac1{\beta^{p_{s,4j}}}\frac1{\beta^2+1}(1-1/\beta^{4n_{s,4j}})\\
&+\sum_{\ell=1}^{n_{s,4j-3}}v(1\mathbf X_1\mathbf Y_1\cdots \mathbf X_{s-1}\mathbf Y_{s-1}\mathbf w_2^{n_{s,1}}\cdots (\mathbf w_0\mathbf w_2)^{n_{s,4j-4}}\mathbf w_2^\ell)\\
&-n_{s,4j-1}v(1\mathbf X_1\mathbf Y_1\cdots \mathbf X_{s-1}\mathbf Y_{s-1}\mathbf w_2^{n_{s,1}}\cdots (\mathbf w_2\mathbf w_0)^{n_{s,4j-2}})\bigg).
\end{align*}
Moreover, using the definition of $p_{s,4j-i}$, we find that each summand on the right-hand side of (\ref{claim2}) may be expanded
\begin{align*}
\frac1{\beta^{p_{2s-1}}}(1-1/\beta^{2\ell_{2s-1}})=&1/\beta^{p_{2s-1}}-1/\beta^{p_{2s}}\\
=&1/\beta^{p_{s,1}}-1/\beta^{p_{s,4r_s+1}}\\
=&\sum_{j=1}^{r_s}\bigg(\frac1{\beta^{p_{s,4j-3}}}(1-1/\beta^{2n_{s,4j-3}})+\frac1{\beta^{p_{s,4j-2}}}(1-1/\beta^{4n_{s,4j-2}})\\
&+\frac1{\beta^{p_{s,4j-1}}}(1-1/\beta^{2n_{s,4j-1}})+\frac1{\beta^{p_{s,4j}}}(1-1/\beta^{4n_{s,4j}})\bigg).
\end{align*}
Subtracting $\sum_{s=1}^t\sum_{j=1}^{r_s}\frac1{\beta^{p_{s,4j-i}}}\frac1{\beta^2+1}(1-1/\beta^{4n_{s,4j-i}}),\ i=0,2,$ from both sides, (\ref{claim2}) becomes
\begin{align*}
&\sum_{s=1}^t\sum_{j=1}^{r_s}\bigg(\frac1{\beta^{p_{s,4j-3}}}\frac1{\beta^2}(1-1/\beta^{2n_{s,4j-3}})+\sum_{\ell=1}^{n_{s,4j-3}}v(1\mathbf X_1\mathbf Y_1\cdots \mathbf X_{s-1}\mathbf Y_{s-1}\mathbf w_2^{n_{s,1}}\cdots (\mathbf w_0\mathbf w_2)^{n_{s,4j-4}}\mathbf w_2^\ell)\\
&-n_{s,4j-1}v(1\mathbf X_1\mathbf Y_1\cdots \mathbf X_{s-1}\mathbf Y_{s-1}\mathbf w_2^{n_{s,1}}\cdots (\mathbf w_2\mathbf w_0)^{n_{s,4j-2}})\bigg)\\
\le&\sum_{s=1}^{t}\sum_{j=1}^{r_s}\bigg(\frac1{\beta^{p_{s,4j-3}}}\frac1{\beta^2+1}(1-1/\beta^{2n_{s,4j-3}})+\frac1{\beta^{p_{s,4j-1}}}\frac1{\beta^2+1}(1-1/\beta^{2n_{s,4j-1}})\bigg).
\end{align*}
Rearranging and using the fact that $1/\beta^2-1/(\beta^2+1)=1/(\beta^2(\beta^2+1))$, the previous inequality is equivalent to 
\begin{flalign}\label{claim3}
&\sum_{s=1}^t\sum_{j=1}^{r_s}\bigg(\sum_{\ell=1}^{n_{s,4j-3}}v(1\mathbf X_1\mathbf Y_1\cdots \mathbf X_{s-1}\mathbf Y_{s-1}\mathbf w_2^{n_{s,1}}\cdots (\mathbf w_0\mathbf w_2)^{n_{s,4j-4}}\mathbf w_2^\ell)\\
&+\frac1{\beta^{p_{s,4j-3}}}\frac1{\beta^2(\beta^2+1)}(1-1/\beta^{2n_{s,4j-3}})\bigg)\nonumber\\
\le&\sum_{s=1}^t\sum_{j=1}^{r_s}\left(n_{s,4j-1}v(1\mathbf X_1\mathbf Y_1\cdots \mathbf X_{s-1}\mathbf Y_{s-1}\mathbf w_2^{n_{s,1}}\cdots (\mathbf w_2\mathbf w_0)^{n_{s,4j-2}})+\frac1{\beta^{p_{s,4j-1}}}\frac1{\beta^2+1}(1-1/\beta^{2n_{s,4j-1}})\right).\nonumber
\end{flalign}
Consider the summand (with respect to the summation over $j$) on the left-hand side of the previous inequality.  We will show that this is less than or equal to $n_{s,4j-3}v(\mathbf d)$, with equality if and only if $n_{s,4j-3}=0$.  If $n_{s,4j-3}=0$, both the summand and $n_{s,4j-3}v(\mathbf d)$ are zero; assume $n_{s,4j-3}>0$.  We must show
\begin{equation}\label{claim_LHS}
\sum_{\ell=1}^{n_{s,4j-3}}(v(\mathbf d)-v(1\mathbf X_1\mathbf Y_1\cdots \mathbf X_{s-1}\mathbf Y_{s-1}\mathbf w_2^{n_{s,1}}\cdots (\mathbf w_0\mathbf w_2)^{n_{s,4j-4}}\mathbf w_2^\ell))>\frac1{\beta^{p_{s,4j-3}}}\frac1{\beta^2(\beta^2+1)}(1-1/\beta^{2n_{s,4j-3}}).
\end{equation}
The left-hand side of the previous line equals
\[\sum_{\ell=1}^{n_{s,4j-3}}\left(\frac1{\beta^{p_{s,4j-3}+2\ell}}v(\mathbf w_2^{n_{s,4j-3}-\ell}(\mathbf w_2\mathbf w_0)^{n_{s,4j-2}}\cdots \mathbf w_{i_n}(1-i_n/2))\right).\]
Note that $(\mathbf w_2\mathbf w_0)^{n_{s,4j-2}}\cdots \mathbf w_{i_n}(1-i_n/2)\succ(\mathbf w_0\mathbf w_2)^\infty$: if not, then the former word begins with $(\mathbf w_0\mathbf w_2)^{n'}\mathbf w_0\mathbf w_i$ for some $n'\ge 0$ and $i\in\{0,1\}$.  But then
\[\mathbf w_2^{n_{s,4j-3}}(\mathbf w_2\mathbf w_0)^{n_{s,4j-2}}\cdots \mathbf w_{i_n}(1-i_n/2)=\mathbf w_2^{n_{s,4j-3}}(\mathbf w_0\mathbf w_2)^{n'}\mathbf w_0\mathbf w_i=\mathbf w_2^{n_{s,4j-3}-1}(\mathbf w_0\mathbf w_2)^{n'+1}\mathbf w_i,\]
contradicting the minimality of the sum of powers $\sum_{\ell=1}^{4r_s}n_{s,\ell}$.  Thus the left-hand side of (\ref{claim_LHS}) is strictly greater than
\begin{align*}
&\sum_{\ell=1}^{n_{s,4j-3}}\left(\frac1{\beta^{p_{s,4j-3}+2\ell}}v(\mathbf w_2^{n_{s,4j-3}-\ell})+\frac1{\beta^{p_{s,4j-2}}}v((\mathbf w_0\mathbf w_2)^\infty)\right)\\
=&\frac1{\beta^{p_{s,4j-3}}}\sum_{\ell=1}^{n_{s,4j-3}}\left(\frac1{\beta^{2\ell+1}}(1-1/\beta^{2n_{s,4j-3}-2\ell})+\frac1{\beta^2+1}\frac1{\beta^{2n_{s,4j-3}+1}}\right)\\
=&\frac1{\beta^{p_{s,4j-3}}}\left(\frac1{\beta^2}(1-1/\beta^{2n_{s,4j-3}})-\frac{n_{s,4j-3}}{\beta^{2n_{s,4j-3}+1}}+\frac1{\beta^2+1}\frac{n_{s,4j-3}}{\beta^{2n_{s,4j-3}+1}}\right)\\
=&\frac1{\beta^{p_{s,4j-3}}}\left(\frac1{\beta^2}(1-1/\beta^{2n_{s,4j-3}})-\frac{\beta^2}{\beta^2+1}\frac{n_{s,4j-3}}{\beta^{2n_{s,4j-3}+1}}\right).
\end{align*}
It suffices to show that the right-hand side of the previous line is greater than or equal to the right-hand side of (\ref{claim_LHS}).  Multiplying both quantities by $\beta^{p_{s,4j-3}+2}(\beta^2+1)$, this is equivalent to showing
\[(\beta^2+1)(1-1/\beta^{2n_{s,4j-3}})-\beta^3\frac{n_{s,4j-3}}{\beta^{2n_{s,4j-3}}}\ge1-\frac1{\beta^{2n_{s,4j-3}}},\]
which simplifies to
\[1-\frac1{\beta^{2n_{s,4j-3}}}\ge\beta\frac{n_{s,4j-3}}{\beta^{2n_{s,4j-3}}}.\]
The left- and right-hand sides of the previous line increase and decrease, respectively, as functions of integers $n_{s,4j-3}>0$.
Since the inequality holds for $n_{s,4j-3}=1$, we conclude that (\ref{claim_LHS}) holds.  Thus the summand on the left-hand side of (\ref{claim3}) is less than or equal to $n_{s,4j-3}v(\mathbf d)$, with equality if and only if $n_{s,4j-3}=0$.

Next, consider the summand on the right-hand side of (\ref{claim3}).  We shall show that this is greater than or equal to $n_{s,4j-1}v(\mathbf d)$ with equality if and only if $n_{s,4j-1}=0$.  Again if $n_{s,4j-1}=0$, both the summand and $n_{s,4j-1}v(\mathbf d)$ equal zero, so assume $n_{s,4j-1}>0$.  The desired inequality is equivalent to
\begin{equation}\label{claim_RHS}
n_{s,4j-1}(v(\mathbf d)-v(1\mathbf X_1\mathbf Y_1\cdots \mathbf X_{s-1}\mathbf Y_{s-1}\mathbf w_2^{n_{s,1}}\cdots (\mathbf w_2\mathbf w_0)^{n_{s,4j-2}}))<\frac1{\beta^{p_{s,4j-1}}}\frac1{\beta^2+1}(1-1/\beta^{2n_{s,4j-1}}).
\end{equation}
The left-hand side of the previous line equals 
\[\frac{n_{s,4j-1}}{\beta^{p_{s,4j-1}}}v(\mathbf w_0^{n_{s,4j-1}}(\mathbf w_0\mathbf w_2)^{n_{s,4j}}\cdots \mathbf w_{i_n}(1-i_n/2))=\frac{n_{s,4j-1}}{\beta^{p_{s,4j}}}v((\mathbf w_0\mathbf w_2)^{n_{s,4j}}\cdots \mathbf w_{i_n}(1-i_n/2)).\]
For similar reasons as above, one finds that
$(\mathbf w_0\mathbf w_2)^{n_{s,4j}}\cdots \mathbf w_{i_n}(1-i_n/2)\prec (\mathbf w_2\mathbf w_0)^\infty$.  It follows that the left-hand side of (\ref{claim_RHS}) is strictly less than
\[\frac{n_{s,4j-1}}{\beta^{p_{s,4j}}}v((\mathbf w_2\mathbf w_0)^\infty)=\frac{n_{s,4j-1}}{\beta^{p_{s,4j}}}\frac{\beta}{\beta^2+1}.\]
Multiplying both sides of (\ref{claim_RHS}) by $\beta^{p_{s,4j}}(\beta^2+1)$ and recalling that $p_{s,4j}-p_{s,4j-1}=2n_{s,4j-1}$, it thus suffices to show that
\[\beta n_{s,4j-1}\le \beta^{2n_{s,4j-1}}-1,\]
which clearly holds for each $n_{s,4j-1}\ge 1$.  This proves that the summand on the right-hand side of (\ref{claim3}) is greater than or equal to $n_{s,4j-1}v(\mathbf d)$ with equality if and only if $n_{s,4j-1}=0$.

Note that (\ref{mathfrak(n)}) may be rewritten as
\begin{align*}
\mathfrak{n}(\mathbf d)&=(1+\#\{1\le k\le n-1\ |\ i_{k}\in\{1,2\}\}+1)-(\#\{1\le k\le n-1\ |\ 2-i_{k}\in\{1,2\}\}+1)\\
&=1+\#\{1\le k\le n-1\ |\ i_{k}=2\}-\#\{1\le k\le n-1\ |\ i_{k}=0\}.
\end{align*}
Since $\mathfrak{n}(\mathbf d)=1$ by assumption, we have 
\[\#\{1\le k\le n-1\ |\ i_{k}=2\}=\#\{1\le k\le n-1\ |\ i_{k}=0\}.\]
Recalling that $\mathbf d=1\mathbf X_1\mathbf Y_1\cdots \mathbf X_t\mathbf Y_t \mathbf w_{i_n}(1-i_n/2)$, (\ref{X_s}) and the fact that each $\mathbf Y_s$ consists solely of $w_1$'s, we find
\[\#\{1\le k\le n-1\ |\ i_{k}=2\}=\sum_{s=1}^t\sum_{j=1}^{r_s}(n_{s,4j-3}+n_{s,4j-2}+n_{s,4j})\]
and
\[\#\{1\le k\le n-1\ |\ i_{k}=0\}=\sum_{s=1}^t\sum_{j=1}^{r_s}(n_{s,4j-2}+n_{s,4j-1}+n_{s,4j}),\]
so
\begin{equation}\label{sum_ns_equal}
\sum_{s=1}^t\sum_{j=1}^{r_s}n_{s,4j-3}=\sum_{s=1}^t\sum_{j=1}^{r_s}n_{s,4j-1}.
\end{equation}
Using this and our prior observations regarding the left- and right-hand sides of (\ref{claim3}), we have
\begin{align*}
&\sum_{s=1}^t\sum_{j=1}^{r_s}\bigg(\sum_{\ell=1}^{n_{s,4j-3}}v(1\mathbf X_1\mathbf Y_1\cdots \mathbf X_{s-1}\mathbf Y_{s-1}\mathbf w_2^{n_{s,1}}\cdots (\mathbf w_0\mathbf w_2)^{n_{s,4j-4}}\mathbf w_2^\ell)\\
&+\frac1{\beta^{p_{s,4j-3}}}\frac1{\beta^2(\beta^2+1)}(1-1/\beta^{2n_{s,4j-3}})\bigg)\\
\le&v(\mathbf d)\sum_{s=1}^t\sum_{j=1}^{r_s}n_{s,4j-3}\\
=&v(\mathbf d)\sum_{s=1}^t\sum_{j=1}^{r_s}n_{s,4j-1}\\
\le&\sum_{j=1}^{r_s}\left(n_{s,4j-1}v(1\mathbf X_1\mathbf Y_1\cdots \mathbf X_{s-1}\mathbf Y_{s-1}\mathbf w_2^{n_{s,1}}\cdots (\mathbf w_2\mathbf w_0)^{n_{s,4j-2}})+\frac1{\beta^{p_{s,4j-1}}}\frac1{\beta^2+1}(1-1/\beta^{2n_{s,4j-1}})\right)
\end{align*}
with equality throughout if and only if each $n_{s,4j-1}=n_{s,4j-3}=0$.  Thus the inequality in (\ref{claim3})---and hence in (\ref{claim})---holds.  It remains to show that $\mathbf d\prec 1(\mathbf w_2\mathbf w_0)^\infty$ if and only if each $n_{s,4j-1}=n_{s,4j-3}=0$.

Suppose that $\mathbf d\succeq 1(\mathbf w_2\mathbf w_0)^\infty$.  Then $\mathbf d$ begins with $1(\mathbf w_2\mathbf w_0)^{n'}\mathbf w_2\mathbf w_i$ for some $n'\ge 0$ and $i\in\{1,2\}$.  This implies that either $n_{1,1}$ or $n_{1,5}$ is positive.  For the converse, suppose that some $n_{s,4j-3}$ or $n_{s,4j-1}$ is positive.  By (\ref{sum_ns_equal}), we can choose some $n_{s,4j-3}>0$ with $(s,j)$ (lexicographically) minimal.  Note that
\[\sigma^{p_{s,4j-3}-1}(\mathbf d)=d_i\mathbf w_2^{n_{s,4j-3}}(\mathbf w_2\mathbf w_0)^{n_{s,4j-2}}\cdots \mathbf w_{i_n}(1-i_n/2)\]
with $d_i\in\{0,1\}$.  Suppose $d_i=0$ (the case that $d_i=1$ is similar).
% See pp. 144 of Goodnotes 'maximal_frequency_of_zero.pdf' for proof
Then $j>1$, and
\begin{align*}
\sigma^{p_{s,4j-7}}(\mathbf d)=\mathbf w_2^{n_{s,4j-7}}(\mathbf w_2\mathbf w_0)^{n_{s,4j-6}}\mathbf w_0^{n_{s,4j-5}}(\mathbf w_0\mathbf w_2)^{n_{s,4j-4}}\mathbf w_2^{n_{s,4j-3}}(\mathbf w_2\mathbf w_0)^{n_{s,4j-2}}\cdots \mathbf w_{i_n}(1-i_n/2).
\end{align*}
Since $d_i=0$, we must have $n_{s,4j-4}=0$.  Moreover, $n_{s,4j-5}>0$ contradicts the minimality of $\sum_{\ell=1}^{4r_s}n_{s,\ell}$, so $n_{s,4j-5}=0$.  Since no three consecutive $n_{s,\ell}$'s can be zero (except possibly the first and final three $n_{s,\ell}$), it follows that $n_{s,4j-6}>0$.  Thus
\begin{equation}\label{sigma_4j-6}
\sigma^{p_{s,4j-6}-1}(\mathbf d)=d_{i'}(\mathbf w_2\mathbf w_0)^{n_{s,4j-6}}\mathbf w_2^{n_{s,4j-3}}(\mathbf w_2\mathbf w_0)^{n_{s,4j-2}}\cdots \mathbf w_{i_n}(1-i_n/2)
\end{equation}
for some $d_{i'}\in\{0,1\}$.  Suppose $d_{i'}=0$.  Then $j>2$, and 
\begin{align*}
\sigma^{p_{s,4j-11}}(\mathbf d)=&\mathbf w_2^{n_{s,4j-11}}(\mathbf w_2\mathbf w_0)^{n_{s,4j-10}}\mathbf w_0^{n_{s,4j-9}}(\mathbf w_0\mathbf w_2)^{n_{s,4j-8}}\mathbf w_2^{n_{s,4j-7}}(\mathbf w_2\mathbf w_0)^{n_{s,4j-6}}\mathbf w_2^{n_{s,4j-3}}(\mathbf w_2\mathbf w_0)^{n_{s,4j-2}}\\
&\cdots \mathbf w_{i_n}(1-i_n/2).
\end{align*}
Since $d_{i'}=0$, we have $n_{s,4j-8}=n_{4j-7}=0$.  If $n_{s,4j-9}>0$, the fact that $n_{s,4j-3}>0$ contradicts the minimality of $\sum_{\ell=1}^{4r_s}n_{s,\ell}$.  But $n_{s,4j-9}=0$ is also a contradiction since this implies three consecutive $n_{s,\ell}$'s are zero.  Thus $d_{i'}=1$.  Now suppose $\sigma^{p_{s,4j-6}-1}(\mathbf d)\prec 1(\mathbf w_2\mathbf w_0)^\infty$.  From (\ref{sigma_4j-6}), we find that $n_{s,4j-3}=1$ and
\[\sigma^{p_{s,4j-6}-1}(\mathbf d)=1(\mathbf w_2\mathbf w_0)^{n_{s,4j-6}}\mathbf w_2(\mathbf w_0\mathbf w_2)^{n'}\mathbf w_0\mathbf w_{i''}\cdots \mathbf w_{i_n}(1-i_n/2)\]
for some $n'\ge 0$ and $i''\in\{0,1\}$.  In any case, this contradicts the minimality of $\sum_{\ell=1}^{4r_s}n_{s,\ell}$.  Thus (using the fact that $\mathbf d\in\mathcal{M}$),
\[\mathbf d\succeq \sigma^{p_{s,4j-6}-1}(\mathbf d)\succeq 1(\mathbf w_2\mathbf w_0)^\infty,\]
and we conclude that $\mathbf d\prec 1(\mathbf w_2\mathbf w_0)^\infty$ if and only if each $n_{s,4j-1}=n_{s,4j-3}=0$.
\end{proof}

Note that for each $n\ge 1$, the word $\mathbf d^n:=1(\mathbf w_2\mathbf w_0)^n001\prec 1(\mathbf w_2\mathbf w_0)^\infty$ satisfies Property $M$.  Moreover, $v(\mathbf d^n)$ approaches $v(1(\mathbf w_2\mathbf w_0)^\infty)=2\beta/(\beta+2)$ from below, and thus $1/v(\mathbf d^n)$ approaches $(\beta+2)/(2\beta)=1/2+1/\beta$ from above.  If $\mathbf d\in\mathcal{M}$ satisfies $\mathbf d\prec 1(\mathbf w_2\mathbf w_0)^\infty$, then there is some $n\ge 1$ for which $\mathbf d\prec\mathbf d^n\prec 1(\mathbf w_2\mathbf w_0)^\infty$ and $1/2+1/\beta<1/v(\mathbf d^n)<1/v(\mathbf d)$.  Since $I_{\mathbf d^n}\cap I_{\mathbf d}=\varnothing$ and $I_{\mathbf d^n}$ and $I_{\mathbf d}$ contain $1/v(\mathbf d^n)$ and $1/v(\mathbf d)$, respectively, it follows that $I_{\mathbf d}\subset (1/2+1/\beta,\beta]$.  Similarly reasoning shows that if $\mathbf d\succ 1(\mathbf w_2\mathbf w_0)^\infty$, then $I_{\mathbf d}\subset(1,1/2+1/\beta)$, and in fact $1/2+1/\beta$ is a non-matching parameter.  

With these observations and the previous lemmas, we are now ready to prove the main result of this section:

\begin{thm}\label{max_of_freq_funcs}
The frequency functions $\mathfrak{f}_S,\mathfrak{f}_T:[1,\beta]\to[0,1]$ attain their maximums $\mathfrak{f}_S(\alpha)=4/5$ and $\mathfrak{f}_T(\alpha)=3/4$ on the maximal interval $[1/2+1/\beta,1+1/\beta^2]$.
\end{thm}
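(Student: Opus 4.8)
The plan is to reduce the whole statement to the single function $\mathfrak{f}_S$ and then to read off the answer from the position of each matching interval relative to the critical sequence $1(\mathbf w_2\mathbf w_0)^\infty$. First I would dispose of $\mathfrak{f}_T$: by (\ref{freq_T_and_freq_S}) we have $\mathfrak{f}_T(\alpha)=2-1/\mathfrak{f}_S(\alpha)$, and since $x\mapsto 2-1/x$ is strictly increasing on $(0,\infty)$, $\mathfrak{f}_T$ attains its maximum at exactly the same parameters as $\mathfrak{f}_S$, with the value $\mathfrak{f}_S=4/5$ corresponding to $\mathfrak{f}_T=2-5/4=3/4$. Thus it suffices to show that $\mathfrak{f}_S\le 4/5$ everywhere and that equality holds exactly on $[1/2+1/\beta,1+1/\beta^2]$. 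The standing tools are the trichotomy recorded via (\ref{freq_func}) at the start of this subsection (on each $I_{\mathbf d}$ the function $\mathfrak{f}_S$ is strictly increasing, strictly decreasing, or constant according as $\mathfrak{n}(\mathbf d)>1$, $<1$, or $=1$), the continuity of $\mathfrak{f}_S$ (Proposition \ref{freq_funcs_continuous}), and the density of matching parameters in $[1,\beta]$.

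Next I would establish $\mathfrak{f}_S\equiv 4/5$ on $[1/2+1/\beta,1+1/\beta^2]$. By the observations preceding the theorem, a word $\mathbf d\in\mathcal{M}$ with $\mathbf d\prec 1(\mathbf w_2\mathbf w_0)^\infty$ has $I_{\mathbf d}\subset(1/2+1/\beta,\beta]$; since $I_{10}=(1+1/\beta^2,\beta]$ and $1+1/\beta^2$ is non-matching, any such $\mathbf d\neq 10$ in fact has $I_{\mathbf d}\subset(1/2+1/\beta,1+1/\beta^2)$, and conversely every matching interval meeting this open interval arises this way. The decisive point is the combinatorial claim that every $\mathbf d\in\mathcal{M}\setminus\{10\}$ with $\mathbf d\prec 1(\mathbf w_2\mathbf w_0)^\infty$ satisfies $\mathfrak{n}(\mathbf d)=1$. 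Granting this, Lemma \ref{freq_func_bdd_lem} gives $\mathfrak{f}_S\equiv 4/5$ on each such $I_{\mathbf d}$; as these intervals are dense in $(1/2+1/\beta,1+1/\beta^2)$ and $\mathfrak{f}_S$ is continuous, $\mathfrak{f}_S\equiv 4/5$ on the closed interval, the endpoints being handled by continuity (with $\mathfrak{f}_S(1+1/\beta^2)=4/5$ inherited from $I_{1001}$ by Example \ref{freq_eg}, and $\mathfrak{f}_S(1/2+1/\beta)=4/5$ as a one-sided limit).

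Then I would prove strictness off the interval. On $(1+1/\beta^2,\beta]=I_{10}$ we have $\mathfrak{n}(10)=0$, so $\mathfrak{f}_S$ is strictly decreasing, and since its left-endpoint value equals the value $4/5$ on $I_{1001}$, it follows that $\mathfrak{f}_S<4/5$ there. On $(1,1/2+1/\beta)$ every matching word satisfies $\mathbf d\succ 1(\mathbf w_2\mathbf w_0)^\infty$; for $\mathfrak{n}(\mathbf d)=1$ the equality clause of Lemma \ref{freq_func_bdd_lem} already gives $\mathfrak{f}_S<4/5$, while for $\mathfrak{n}(\mathbf d)\neq 1$ the monotonicity of $\mathfrak{f}_S$ on $I_{\mathbf d}$ reduces its supremum to an endpoint value, which via the cascade adjacency $\alpha^-_{\mathbf d}=\alpha^+_{\psi(\mathbf d)}$ (Proposition \ref{cacscades_preserve_prop_M} and Lemma \ref{freq_func_on_cascades}) and the value $4/5$ attained at $1/2+1/\beta$ is at most $4/5$; continuity then forces $\mathfrak{f}_S<4/5$ on this open interval. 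Combining the three regions shows that the maximum $\mathfrak{f}_S=4/5$, equivalently $\mathfrak{f}_T=3/4$, is attained exactly on $[1/2+1/\beta,1+1/\beta^2]$, which is therefore maximal.

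The hard part will be the combinatorial claim that $\mathbf d\in\mathcal{M}\setminus\{10\}$ with $\mathbf d\prec 1(\mathbf w_2\mathbf w_0)^\infty$ forces $\mathfrak{n}(\mathbf d)=1$ (equivalently, that words with $\mathfrak{n}(\mathbf d)\neq 1$ and $\mathbf d\neq 10$ lie strictly above $1(\mathbf w_2\mathbf w_0)^\infty$, hence in $(1,1/2+1/\beta)$). This cannot follow from counting $\mathbf w_2$- and $\mathbf w_0$-blocks alone, since lexicographic order is governed by first differences rather than global counts; it is Property $M$—in particular the constraint $\sigma^j(\overline{\varphi(\mathbf d)})\preceq\mathbf d$—that excludes the offending patterns such as $1\mathbf w_2\mathbf w_0\mathbf w_0\cdots$ with an excess of $\mathbf w_0$-blocks (for example $1010000001\notin\mathcal{M}$ precisely because $\sigma^4(\overline{\varphi(\mathbf d)})\succ\mathbf d$). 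I would therefore carry out a careful lexicographic analysis of the block decomposition of admissible words against $1(\mathbf w_2\mathbf w_0)^\infty$, showing that Property $M$ synchronises the $\mathbf w_2$/$\mathbf w_0$ blocks of $\mathbf d$ and of $\overline{\varphi(\mathbf d)}$ so that their counts balance exactly when $\mathbf d\prec 1(\mathbf w_2\mathbf w_0)^\infty$; the secondary difficulty is the uniform control of the right endpoints of the strictly increasing ($\mathfrak{n}>1$) intervals in the left region, which I would handle through their accumulation at $1/2+1/\beta$.
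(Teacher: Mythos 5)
Your reduction to $\mathfrak{f}_S$ via (\ref{freq_T_and_freq_S}) and your use of the trichotomy, continuity and density are all fine, but the proposal hinges on a combinatorial claim that you state, correctly flag as the hard part, and then do not prove: that every $\mathbf d\in\mathcal{M}\setminus\{10\}$ with $\mathbf d\prec 1(\mathbf w_2\mathbf w_0)^\infty$ satisfies $\mathfrak{n}(\mathbf d)=1$. Nothing in the paper supplies this; Lemma \ref{freq_func_bdd_lem} runs in the opposite direction (it \emph{assumes} $\mathfrak{n}(\mathbf d)=1$ and then characterises when the value $4/5$ is attained), and the claim is really a \emph{consequence} of the theorem rather than an available ingredient: if some $I_{\mathbf d}\subset(1/2+1/\beta,1+1/\beta^2)$ had $\mathfrak{n}(\mathbf d)\neq 1$, then $\mathfrak{f}_S$ would be strictly monotone there, contradicting the conclusion, so any direct proof of the claim must redo a comparable amount of combinatorics. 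Your sketch (``a careful lexicographic analysis of the block decomposition against $1(\mathbf w_2\mathbf w_0)^\infty$'') is plausible but is exactly the missing content: for instance the word $1\mathbf w_2\mathbf w_0\mathbf w_0\mathbf w_2 0=1010000010$ is in admissible block form, is $\prec 1(\mathbf w_2\mathbf w_0)^\infty$, and has $\mathfrak{n}=0$; it is excluded only because $\sigma^4(\overline{\varphi(\mathbf d)})=101001\cdots\succ\mathbf d$ violates Property $M$, and one would have to carry out this kind of exclusion uniformly over all block patterns.

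The paper's proof avoids the claim entirely by exploiting the cascade structure. By Lemma \ref{freq_func_on_cascades} every cascade word $\psi(\mathbf d)$ automatically has $\mathfrak{n}(\psi(\mathbf d))=1$, so Lemma \ref{freq_func_bdd_lem} applies on every cascade interval; the proof then shows that \emph{every} endpoint of \emph{every} matching interval in $[1,1+1/\beta^4)$ is a limit from above of endpoints of cascade intervals (lower endpoints via the adjacency $\alpha_{\mathbf d}^-=\alpha_{\psi(\mathbf d)}^+$, upper endpoints because any matching interval just above $\alpha_{\mathbf d}^+$ spawns a cascade squeezed between itself and $\alpha_{\mathbf d}^+$). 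Continuity plus monotonicity on each $I_{\mathbf d}$ then yield the bound everywhere without ever computing $\mathfrak{n}(\mathbf d)$ for a general $\mathbf d\prec 1(\mathbf w_2\mathbf w_0)^\infty$. This same device repairs the second soft spot in your argument: on $(1,1/2+1/\beta)$ the supremum of a strictly increasing interval sits at its \emph{upper} endpoint, which the adjacency $\alpha_{\mathbf d}^-=\alpha_{\psi(\mathbf d)}^+$ does not control, and your appeal to ``accumulation at $1/2+1/\beta$'' is not accurate (these upper endpoints are dense in $(1,1/2+1/\beta)$). I would recommend adopting the reduction to cascade endpoints rather than attempting the direct combinatorial classification of $\mathfrak{n}$ on $\{\mathbf d\prec 1(\mathbf w_2\mathbf w_0)^\infty\}$.
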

\begin{proof}
By (\ref{freq_T_and_freq_S}), it suffices to show the statement for $\mathfrak{f}_S$.
Recall from Example \ref{freq_eg} that $\mathfrak{f}_S$ equals $4/5$ on $I_{1010}\cup I_{1001}=(1+1/\beta^4,1+1/\beta^2)\backslash\{1+1/\beta^3\}$.  Moreover, $\mathfrak{f}_S$ is decreasing on $I_{10}=(1+1/\beta^2,\beta]$ since $\mathfrak{n}(10)=0$.  By continuity of $\mathfrak{f}_S$, the statement is proven for $\alpha\in[1+1/\beta^4,\beta]$.  

We now show that $\mathfrak{f}_S(\alpha)\le 4/5$ for $\alpha\in[1,1+1/\beta^4)$, with equality if $\alpha\ge 1/2+1/\beta$.  Since $\mathfrak{f}_S$ is continuous and is monotone on each matching interval $I_{\mathbf d}$, and since the set of matching parameters $\cup_{\mathbf d\in\mathcal{M}}I_\mathbf d$ is dense, it suffices to show the desired statements for the endpoints $\alpha_{\mathbf d}^\pm$ of matching intervals in $[1,1+1/\beta^4)$.  
Notice that each endpoint $\alpha_{\mathbf d}^+,\alpha_{\mathbf d}^-\in[1,1+1/\beta^4)$ is the limit (from above) of some sequence of endpoints of cascade intervals.  In particular, if $\mathbf d\in\psi(\mathcal{M}_U)$, then $I_{\mathbf d}$ is itself a cascade interval and we take constant sequences.  Suppose $\mathbf d\in\mathcal{M}_U\backslash\psi(\mathcal{M}_U)$.  Since each lower endpoint $\alpha_{\mathbf d}^-$ equals the upper endpoint $\alpha_{\psi(\mathbf d)}^+$ of $I_{\psi(\mathbf d)}$ by Proposition \ref{cacscades_preserve_prop_M}, we can again take the constant sequence.  Now consider $\alpha_{\mathbf d}^+$.  Let $\varepsilon>0$, and choose some matching parameter $\alpha'\in I_{\mathbf d'}$ satisfying $\alpha_{\mathbf d}^+<\alpha'<\alpha_{\mathbf d}^++\varepsilon$.  Since matching intervals are disjoint, Proposition \ref{cacscades_preserve_prop_M} implies that the cascade interval $I_{\psi(\mathbf d')}$ lies strictly between $\alpha_{\mathbf d}^+$ and $\alpha'$, and thus its endpoints are within a distance of $\varepsilon$ of $\alpha_{\mathbf d}^+$.  It follows $\alpha_{\mathbf d}^+$ is the limit (from above) of a sequence of endpoints of cascade intervals.  
Again by continuity of $\mathfrak{f}_S$, it now suffices to show the desired statements for endpoints of cascade intervals.  These follow directly from Lemmas \ref{freq_func_on_cascades} and \ref{freq_func_bdd_lem} and the observation above that if $I_{\mathbf d}\subset (1/2+1/\beta,\beta]$, then $\mathbf d\prec 1(\mathbf w_2\mathbf w_0)^\infty$. 

Maximality of the interval $[1/2+1/\beta,1+1/\beta^2]$ follows from the fact that $\mathfrak{f}_S$ is strictly decreasing on $(1+1/\beta^2,\beta]$, density of matching parameters in $[1,\beta]$ and Lemmas \ref{freq_func_on_cascades} and \ref{freq_func_bdd_lem}.
\end{proof}

Theorem \ref{main_freq_thm} is now a collection of previous results:

\begin{proof}[Proof of Theorem \ref{main_freq_thm}]
This is a direct consequence of Proposition \ref{freq_funcs_continuous}, Theorem \ref{max_of_freq_funcs} and Equations (\ref{freq_T_and_freq_S}) and (\ref{freq_func}).
\end{proof}

%%%%%%%%%%
%%%%%%%%%%
%%%%%%%%%%
\section{Appendix: proofs of technical lemmas}\label{Appendix: proofs of technical lemmas}
%%%%%%%%%%
%%%%%%%%%%
%%%%%%%%%%

We include here two technical results, which together with Lemma \ref{lexicographical_ordering} prove Lemma \ref{reciprocal_endpt_lem}.  Recall that $\Delta(\mathbf u)$ denotes the cylinder set of points $x\in[0,1]$ for which the $\beta$-expansion of $x$ begins with $\mathbf u$.

\begin{lem}\label{reciprocal_expansions}
Let $\mathbf d=d_1\cdots d_m\in\mathcal{M}_U$ and $\mathbf e:=\varphi(\mathbf d)=e_1\cdots e_m$.  The $\beta$-expansions of $1/\alpha_{\mathbf d}^-,\ 1/\alpha_{\mathbf d}^+$  and $1-1/\alpha_{\mathbf d}^+$ are given by 
\begin{align*}
(b_j(1/\alpha_{\mathbf d}^-))_{j\ge 1}&=\begin{cases}
(\mathbf d\overline{e_2\cdots e_{m-2}}0)^\infty, & d_m=1\\
(\mathbf d\overline{e_1\cdots e_{m-1}}0)^\infty, & d_m=0
\end{cases},\\
(b_j(1/\alpha_{\mathbf d}^+))_{j\ge 1}&=\begin{cases}
(d_1\cdots d_{m-1}0)^\infty, & d_m=1\\
(d_1\cdots d_{m-2}0)^\infty, & d_m=0\\
\end{cases}
\ \ \ \text{and}\\
(b_j(1-1/\alpha_{\mathbf d}^+))_{j\ge 1}&=\begin{cases}
\overline{\mathbf e}^\infty, & d_m=1\\
0(\overline{e_2\cdots e_m})^\infty, & d_m=0
\end{cases}.
\end{align*}
\end{lem}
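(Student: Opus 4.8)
The plan is to exploit uniqueness of $\beta$-expansions: to identify the $\beta$-expansion of a given $x\in[0,1)$ it suffices to produce a sequence that (i) is admissible---and hence, by Parry's criterion (\cite{parry_60}), is the greedy $\beta$-expansion of its own value---and (ii) has value exactly $x$. For $\beta$ the golden mean the admissible sequences are precisely those in $\{0,1\}^{\mathbb N}$ containing no factor $11$ and not eventually equal to $(10)^\infty$. Each candidate period displayed in the statement is a binary word with no factor $11$ (inherited from the fact that $\mathbf d$ and $\overline{\mathbf e}$ contain no consecutive nonzero digits), it ends in $0$ so that the periodic wrap-around creates no $11$, and it contains a factor $00$ so that no shift can equal $(10)^\infty$; thus admissibility will be immediate in every case. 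Consequently the whole lemma reduces to the three value computations, whose two recurring inputs are Proposition \ref{property_M_difference} (giving $v(\mathbf e)=v(\mathbf d)-1$) and the final-digit structure recorded in (\ref{matching_final_digits}).

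First I would settle the middle formula, for $1/\alpha_{\mathbf d}^+$. Writing a candidate period of length $\ell$ as $\mathbf u$ and using $v(\mathbf u^\infty)=\beta^\ell v(\mathbf u)/(\beta^\ell-1)$, the computation splits into the two cases $d_m=1$ and $d_m=0$. When $d_m=1$ the period is $d_1\cdots d_{m-1}0$ and $v(d_1\cdots d_{m-1})=v(\mathbf d)-\beta^{-m}$, giving $v(\mathbf u^\infty)=(\beta^m v(\mathbf d)-1)/(\beta^m-1)$, which is precisely the reciprocal of $\alpha_{\mathbf d}^+$ as defined in (\ref{matching_interval}). When $d_m=0$ one has $d_{m-1}=1$ by (\ref{matching_final_digits}), so $v(d_1\cdots d_{m-2})=v(\mathbf d)-\beta^{1-m}$, and the length-$(m-1)$ period $d_1\cdots d_{m-2}0$ yields $(\beta^{m-1}v(\mathbf d)-1)/(\beta^{m-1}-1)=1/\alpha_{\mathbf d}^+$ after cancelling a factor of $\beta$. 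This establishes the second expansion.

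Next I would deduce the first formula from the second. By Proposition \ref{cacscades_preserve_prop_M} we have $\psi(\mathbf d)\in\mathcal M_U$ and $\alpha_{\mathbf d}^-=\alpha_{\psi(\mathbf d)}^+$, so $1/\alpha_{\mathbf d}^-$ is obtained by applying the $1/\alpha^+$ formula just proved to the word $\psi(\mathbf d)$. The bookkeeping here is what I expect to be the main obstacle: one must read off the last two digits of $\psi(\mathbf d)$, select the correct branch of the $1/\alpha^+$ formula, and truncate accordingly. When $d_m=1$, $(\ref{matching_final_digits})$ gives $\overline{e_{m-1}e_m}=10$, so $\psi(\mathbf d)=\mathbf d\,\overline{e_2\cdots e_m}$ ends in $10$; the $d_m=0$ branch then applies, and deleting the final two digits before appending a $0$ gives exactly $\mathbf d\,\overline{e_2\cdots e_{m-2}}\,0$. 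When $d_m=0$ we have $\overline{e_m}=1$, so $\psi(\mathbf d)=\mathbf d\,\overline{\mathbf e}$ ends in $1$; the $d_m=1$ branch applies, and deleting the single final digit before appending a $0$ gives $\mathbf d\,\overline{e_1\cdots e_{m-1}}\,0$. These are precisely the claimed periods, and admissibility is inherited from the $1/\alpha^+$ case applied to $\psi(\mathbf d)$.

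Finally I would verify the third formula by a direct computation, again in two cases, using $v(\overline{\mathbf e})=-v(\mathbf e)=1-v(\mathbf d)$. When $d_m=1$ the period $\overline{\mathbf e}$ has length $m$ and $v(\overline{\mathbf e}^\infty)=\beta^m(1-v(\mathbf d))/(\beta^m-1)$, which equals $1-(\beta^m v(\mathbf d)-1)/(\beta^m-1)=1-1/\alpha_{\mathbf d}^+$. When $d_m=0$ the expansion $0(\overline{e_2\cdots e_m})^\infty$ has value $\beta^{-1}\cdot\beta^{m-1}v(\overline{e_2\cdots e_m})/(\beta^{m-1}-1)$, and since $e_1=0$ gives $v(e_2\cdots e_m)=\beta v(\mathbf e)=\beta(v(\mathbf d)-1)$, this simplifies to $\beta^{m-1}(1-v(\mathbf d))/(\beta^{m-1}-1)=1-1/\alpha_{\mathbf d}^+$. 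Assembling the three verifications completes the proof.
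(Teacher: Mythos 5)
Your proposal is correct, but it proves the lemma by a genuinely different route than the paper. The paper fixes one representative case ($1/\alpha_{\mathbf d}^-$ with $d_m=1$), shows by explicit two-sided inequalities that the point lies in the cylinder $\Delta(\mathbf u)$ of the claimed period $\mathbf u$ (via Equation (\ref{cylinder_interval})), and then checks $B^{|\mathbf u|}(1/\alpha_{\mathbf d}^-)=1/\alpha_{\mathbf d}^-$ using Equation (\ref{B^k_eqn}), declaring the remaining five cases ``similar.'' You instead invoke Parry's characterisation of greedy expansions for the golden mean (no factor $11$, no shift equal to $(10)^\infty$) to reduce each case to a single value computation, and---more substantially---you derive the $1/\alpha_{\mathbf d}^-$ formulas from the $1/\alpha_{\mathbf d}^+$ formulas via the cascade identity $\alpha_{\mathbf d}^-=\alpha_{\psi(\mathbf d)}^+$ of Proposition \ref{cacscades_preserve_prop_M}, which the paper does not exploit here. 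This buys a shorter argument with fewer independent computations and no cylinder-endpoint inequalities; the costs are an appeal to Parry's theorem (standard, and the paper already cites \cite{parry_60} for a weaker fact) and an added dependency of this appendix lemma on Proposition \ref{cacscades_preserve_prop_M}---which is harmless, since that proposition is purely combinatorial and does not rest on Lemma \ref{reciprocal_expansions} or anything downstream of it, so no circularity arises. Your value computations and the $\psi$-bookkeeping (reading off the last digits $\overline{e_{m-1}e_m}=10$ resp.\ $\overline{e_m}=1$ of $\psi(\mathbf d)$ to select the correct branch) all check out. One small inaccuracy in the admissibility paragraph: the period $\overline{e_2\cdots e_m}$ in the case $d_m=0$ ends in $\overline{e_m}=1$, not $0$; the wrap-around still produces no $11$ because that period \emph{begins} with $\overline{e_2}=0$, so the conclusion stands, but the justification should be adjusted for that one case.
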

\begin{proof}
We consider only the $\beta$-expansion of $1/\alpha_{\mathbf d}^-$ for $d_m=1$; the proofs of the other expansions are similar. 
% See images in Goodnotes 'matching_intervals.pdf' and pp. 163, 166 & 167 of 'words_and_alpha_intervals_scratch.pdf' for other cases
It suffices to show that $1/\alpha_{\mathbf d}^-\in\Delta(\mathbf d\overline{e_2\cdots e_{m-2}}0)$ and $B^{2m-2}(1/\alpha_{\mathbf d}^-)=1/\alpha_{\mathbf d}^-$.  First, note that
\begin{align*}
v(\mathbf d\overline{e_2\cdots e_{m-2}}0)&=v(\mathbf d)-(1/\beta^m)v(e_2\cdots e_{m-2})\\
&=v(\mathbf d)-(1/\beta^{m-1})v(e_1\cdots e_{m-2})\\
&=v(\mathbf d)-(1/\beta^{m-1})(v(\mathbf e)+1/\beta^{m-1})\\
&=v(\mathbf d)-(1/\beta^{m-1})(v(\mathbf d)-1+1/\beta^{m-1})\\
&=(1-1/\beta^{m-1})(v(\mathbf d)+1/\beta^{m-1}).
\end{align*}
Using this and Equation (\ref{cylinder_interval}), $1/\alpha_{\mathbf d}^-\in\Delta(\mathbf d\overline{e_2\cdots e_{m-2}}0)$ if and only if
\[(1-1/\beta^{m-1})(v(\mathbf d)+1/\beta^{m-1})\le 1/\alpha_{\mathbf d}^-<(1-1/\beta^{m-1})v(\mathbf d)+1/\beta^{m-1}.\]
Since $d_m=1$, the first inequality holds if and only if
\[(1-1/\beta^{m-1})(v(\mathbf d)+1/\beta^{m-1})\le \frac{\beta^mv(\mathbf d)+\beta}{\beta^m+\beta},\]
or
\[(\beta^m+\beta)(1-1/\beta^{m-1})(v(\mathbf d)+1/\beta^{m-1})\le \beta^mv(\mathbf d)+\beta.\]
Factoring $\beta^m$ from the first and multiplying it through the third term, the left-hand side is equal to
\[(1+1/\beta^{m-1})(1-1/\beta^{m-1})(\beta^mv(\mathbf d)+\beta)=(1-1/\beta^{2m-2})(\beta^mv(\mathbf d)+\beta),\]
which is less than $\beta^mv(\mathbf d)+\beta$.  The second inequality is true if and only if 
\[\frac{\beta^mv(\mathbf d)+\beta}{\beta^m+\beta}<(1-1/\beta^{m-1})v(\mathbf d)+1/\beta^{m-1}.\]
Multiplying both sides by $\beta^m+\beta$, this is equivalent to
\[\beta^mv(\mathbf d)+\beta<(\beta^m-1/\beta^{m-2})v(\mathbf d)+\beta+1/\beta^{m-2},\]
or $(1/\beta^{m-2})v(\mathbf d)<1/\beta^{m-2}$.  This holds since $v(\mathbf d)<v((10)^\infty)=1$.  Thus $1/\alpha_{\mathbf d}^-\in\Delta(\mathbf d\overline{e_2\cdots e_{m-2}}0)$.  With this and Equation (\ref{B^k_eqn}), 
\begin{align*}
B^{2m-2}(1/\alpha_{\mathbf d}^-)&=\beta^{2m-2}(1/\alpha_{\mathbf d}^--v(\mathbf d\overline{e_2\cdots e_{m-2}}0))\\
&=\beta^{2m-2}\left(\frac{\beta^mv(\mathbf d)+\beta}{\beta^m+\beta}-(1-1/\beta^{m-1})(v(\mathbf d)+1/\beta^{m-1})\right)\\
&=\beta^{2m-2}\left(\frac{\beta^mv(\mathbf d)+\beta-(\beta^m-1/\beta^{m-2})(v(\mathbf d)+1/\beta^{m-1})}{\beta^m+\beta}\right)\\
&=\frac{\beta^mv(\mathbf d)+\beta}{\beta^m+\beta}\\
&=1/\alpha_{\mathbf d}^-.
\end{align*}
\end{proof}

\begin{lem}\label{lex_ineq_lemma}
Let $\mathbf d=d_1\cdots d_m\in\mathcal{M}_U$ and $\mathbf e:=\varphi(\mathbf d)=e_1\cdots e_m$.  If $d_m=1$, then for each $j>0$,
\[\sigma^j((\mathbf d\overline{e_2\cdots e_{m-2}}0)^\infty)\preceq (\mathbf d\overline{e_2\cdots e_{m-2}}0)^\infty\]
and 
\[\sigma^j(\overline{\mathbf e}^\infty)\preceq (d_1\cdots d_{m-1}0)^\infty.\]
If $d_m=0$, then for each $j>0$, 
\[\sigma^j((\mathbf d\overline{e_1\cdots e_{m-1}}0)^\infty)\preceq (\mathbf d\overline{e_1\cdots e_{m-1}}0)^\infty\]
and 
\[\sigma^j(0(\overline{e_2\cdots e_m})^\infty)\preceq(d_1\cdots d_{m-2} 0)^\infty.\]
\end{lem}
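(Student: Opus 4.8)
The plan is to prove the case $d_m=1$ in full and note that $d_m=0$ is entirely analogous (with the period $\mathbf d\overline{e_1\cdots e_{m-1}}0$ in place of $\mathbf d\overline{e_2\cdots e_{m-2}}0$, with $0(\overline{e_2\cdots e_m})^\infty$ in place of $\overline{\mathbf e}^\infty$, and with the indices shifted by one). The engine throughout will be the two inequalities supplied by Property $M$, namely $\sigma^k(\mathbf d)\preceq\mathbf d$ and $\sigma^k(\overline{\mathbf e})\preceq\mathbf d$ for every $k\ge 0$ (finite words padded by $0$'s), used alongside the structural facts $d_1=1$, $e_1=0$, the absence of consecutive nonzero digits in $\mathbf d$ and $\mathbf e$, and the terminal blocks from (\ref{matching_final_digits}), which for $d_m=1$ read $d_{m-2}d_{m-1}d_m=001$ and $\overline{e_{m-2}e_{m-1}e_m}=010$. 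Since both sides of each asserted inequality are periodic, I would first reduce to a single period: for the self-maximal statement it suffices to treat the shifts $0<j<\mathrm{len}(P)$ of the period $P$, and for the cross statement it suffices to take $j\equiv r\pmod m$ with $0\le r<m$.

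For the self-maximal inequality I would write $P=\mathbf d\,\mathbf f\,0$ with $\mathbf f:=\overline{e_2\cdots e_{m-2}}$ and classify each shift by the segment ($\mathbf d$, $\mathbf f$, or the final $0$) in which it begins. A shift beginning in the $\mathbf d$-segment exposes a suffix of $\mathbf d$, which I compare to the prefix $\mathbf d$ of $P$ via $\sigma^j(\mathbf d)\preceq\mathbf d$; a shift beginning in the $\mathbf f$-segment exposes a suffix of $\mathbf f$, and since $\mathbf f$ is a factor of $\overline{\mathbf e}$ I compare it to $\mathbf d$ via $\sigma^k(\overline{\mathbf e})\preceq\mathbf d$. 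In the generic situation the first position of disagreement already lies inside this first matched segment, where Property $M$ forces the shifted word to be the smaller one.

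For the cross inequality $\sigma^j(\overline{\mathbf e}^\infty)\preceq(d_1\cdots d_{m-1}0)^\infty=:R^\infty$ I would reduce to $0\le r<m$ and inspect the leading digit $\overline{e_{r+1}}\in\{0,1\}$. If $e_{r+1}=0$ the shifted word begins with $0<1=d_1$, so it is strictly smaller than $R^\infty$; this also settles $r=0$. If $e_{r+1}=-1$, I match the exposed suffix $\overline{e_{r+1}\cdots e_m}$ (of length $m-r\le m-1$) against $d_1\cdots d_{m-r}$ using $\sigma^r(\overline{\mathbf e})\preceq\mathbf d$, and since $R$ agrees with $\mathbf d$ in its first $m-1$ entries, any strict disagreement inside this suffix is decided in favour of $R^\infty$.

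The hard part will be the boundary-crossing (tie) case, where the exposed suffix coincides \emph{exactly} with the corresponding prefix---for instance $\overline{e_{r+1}\cdots e_m}=d_1\cdots d_{m-r}$, or the analogous full-segment match in the self-maximal argument---so that the relevant Property $M$ inequality degenerates to an equality and the comparison spills over into the next period. Here I would exploit the explicit terminal structure: the ending $\overline{e_{m-2}e_{m-1}e_m}=010$ forces $d_{m-r}=0$, while the shifted word resumes with $\overline{e_1}=0$ and $R^\infty$ (resp.\ $P^\infty$) resumes at a digit which, thanks to the trailing $0$'s and the no-consecutive-nonzeros property, is never strictly smaller than the resumed $0$. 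Iterating this gives a finite descent on the length of the tie in which the shifted side never becomes strictly larger, which yields $\preceq$ with equality permitted. Arranging this descent uniformly---so that one bookkeeping argument covers all four inequalities---is the only genuinely delicate point; everything else is a direct application of the two Property $M$ inequalities.
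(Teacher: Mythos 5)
Your reduction to a single period, the classification of shifts by segment, and the treatment of the generic case (first disagreement inside the first matched segment, decided by $\sigma^k(\mathbf d)\preceq\mathbf d$ or $\sigma^k(\overline{\mathbf e})\preceq\mathbf d$) are all correct and match the paper's proof. The gap is in the tie case, which is the entire substance of the lemma. Your proposed resolution --- ``the shifted word resumes with $\overline{e_1}=0$, the other side resumes at a digit that is never smaller, and iterating gives a finite descent in which the shifted side never becomes strictly larger'' --- does not close. After an exact tie such as $\overline{e_{j+1}\cdots e_m}=d_1\cdots d_{m-j}$, the continued comparison is between $\sigma^0(\overline{\mathbf e}^\infty)$ and $\sigma^{m-j}\bigl((d_1\cdots d_{m-1}0)^\infty\bigr)$, i.e.\ between two periodic words \emph{both} at nonzero phase relative to their natural origins; this is not an instance of the original statement, the ``length of the tie'' has no reason to decrease upon wrapping, and the assertion that the shifted side never becomes strictly larger at the eventual first disagreement is precisely the claim to be proved, not a consequence of trailing zeros and the no-consecutive-nonzeros property.

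The missing idea is the block-level duality between $\mathbf d=1\mathbf w_{i_1}\cdots\mathbf w_{i_n}(1-i_n/2)$ and $\overline{\mathbf e}=0\mathbf w_{2-i_1}\cdots\mathbf w_{2-i_n}(i_n/2)$. The paper first shows that an exact tie forces the shift to land at a block boundary (using $d_{j+1}=d_{m-j}=1$ to rule out $j\in\{m-2,m-3\}$, etc.), rewrites the alleged violation in block form, and then \emph{complements} the resulting identity of index tuples: an equality $(0,2-i_1,\dots,2-i_{p-1})=(i_{q-p},\dots,i_{q-1})$ together with $2-i_p>i_q$ becomes $(2-i_{q-p},\dots,2-i_{q-1})=(2,i_1,\dots,i_{p-1})$ together with $2-i_q>i_p$, which exhibits a shift of $\overline{\mathbf e}$ strictly exceeding $\mathbf d$ and contradicts Property $M$. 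This reflection through $i\mapsto 2-i$ (carried out in three subcases depending on where the first differing block falls) is what converts a ``wrong-way'' disagreement after the tie into a violation of the hypotheses; the terminal pattern $001/010$ and the absence of consecutive nonzero digits alone are not enough. Without this step your argument proves only the easy half of each inequality.
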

\begin{proof}
We prove the statements for $d_m=1$; the other proofs are similar.
% See pp. 164, 165, 167 & 168 of Goodnotes 'words_and_alpha_intervals_scratch.pdf' for proofs.
Write 
\[\mathbf d=1\mathbf{w}_{i_1}\mathbf{w}_{i_2}\cdots\mathbf{w}_{i_n}(1-i_n/2)\]
and
\[\mathbf e=\overline{0\mathbf{w}_{2-i_1}\mathbf{w}_{2-i_2}\cdots\mathbf{w}_{2-i_n}(i_n/2)}\]
with each $i_k\in\{0,1,2\}$ and $i_n=0$.  Due to periodicity, it suffices to show the first inequality for $0\le j<m-2$.  Note that $d_m=1$ implies $\overline{e_{m-1}}=1$.  If $j\ge m$, then 
\[\sigma^{j}((\mathbf d\overline{e_2\cdots e_{m-2}}0)^\infty)=(\overline{e_{j-m+2}\cdots e_{m-2}}0\mathbf d\overline{e_2\cdots e_{j-m+1}})^\infty\prec \overline{e_{j-m+2}\cdots e_{m-1}e_m}\preceq \mathbf d\prec (\mathbf d\overline{e_2\cdots e_{m-2}}0)^\infty.\]
Now suppose $0\le j<m$.  It suffices to show that
\[d_{j+1}\cdots d_m\overline{e_2\cdots e_{m-2}}0d_1\cdots d_j\preceq \mathbf d\overline{e_2\cdots e_{m-2}}0.\]
This trivially holds if $j=0$, so assume $j>0$.  Since $\sigma^j(\mathbf d)\preceq\mathbf d$, we have $d_{j+1}\cdots d_m\preceq d_1\cdots d_{m-j}$.  If this inequality is strict, we are finished.  Suppose equality holds.  
Then we wish to show
\[\overline{e_2\cdots e_{m-2}}0d_1\cdots d_j\preceq d_{m-j+1}\cdots d_m\overline{e_2\cdots e_{m-2}}0.\]
Since $\overline{e_{m-1}}=1$, it suffices to show
\begin{equation}\label{lex_ineq}
\overline{e_2\cdots e_{m-1}}\preceq d_{m-j+1}\cdots d_m\overline{e_2\cdots e_{m-j-1}}.
\end{equation}
If $j=m-1$, this is trivial, so suppose $j<m-1$.  By assumption, $d_{j+1}\cdots d_m=d_1\cdots d_{m-j}$, so $d_{j+1}=d_1=1=d_m=d_{m-j}$.  Now $d_2=0$ implies $j\neq m-2$, and similarly $d_{m-2}=0$ implies $j\neq m-3$.  Hence $j<m-3$, and $d_{j+1}=d_{m-j}=1$ imply that $d_{j+2}$ and $d_{m-j+1}$  are the beginnings of some blocks $\mathbf w_{i_p}$ and $\mathbf w_{i_\ell}$, respectively.  (Similarly, $\overline{e_{j+2}}$ and $\overline{e_{m-j+1}}$ are the beginnings of $\mathbf w_{2-i_p}$ and $\mathbf w_{2-i_\ell}$, respectively.)  Then $d_1\cdots d_{m-j}=d_{j+1}\cdots d_m$ may be written as
\[1\mathbf w_{i_1}\cdots \mathbf w_{i_{\ell-1}}=1\mathbf w_{i_{p}}\cdots \mathbf w_{i_{n-1}}001.\]
In particular, $i_{\ell-1}=1$, and $\mathbf w_{2-i_{\ell-1}}=\mathbf w_1$ implies $\overline{e_{m-j}}=1$.

The desired inequality (\ref{lex_ineq}) may be written in terms of blocks:
\[\mathbf w_{2-i_1}\cdots \mathbf w_{2-i_{n}}\preceq \mathbf w_{i_\ell}\cdots \mathbf w_{i_{n-1}}\mathbf w_1\mathbf w_{2-i_1}\cdots \mathbf w_{2-i_{\ell-2}}.\]
Suppose for the sake of contradiction that this inequality does not hold, and let $1\le k\le n$ be minimal such that $\mathbf w_{2-i_k}$ differs from the $k^{\text{th}}$ block on the right-hand side.  Then
\[\mathbf w_{2-i_k}\succ \begin{cases}
\mathbf w_{i_{\ell+k-1}}, & k<n-\ell+1\\
\mathbf w_{1}, & k=n-\ell+1\\
\mathbf w_{2-i_{k-(n-\ell)-1}}, & k>n-\ell+1 \\
\end{cases},\]
and we consider these three cases separately:
\begin{enumerate}
\item[(i)] If $k<n-\ell+1$, then
\[(2-i_1,\dots,2-i_{k-1})=(i_\ell,\dots,i_{\ell+k-2})\]
and $2-i_k>i_{\ell-k-1}$ imply 
\[(2-i_\ell,\dots,2-i_{\ell+k-2})=(i_1,\dots,i_{k-1})\]
and $2-i_{\ell-k-1}>i_{k}$.  This gives
\[1\mathbf w_{2-i_\ell}\cdots \mathbf w_{2-i_{\ell+k-1}}\succ 1\mathbf w_{i_1}\cdots \mathbf w_{i_k}.\]
Recall that $\overline{e_{m-j+1}}$ is the beginning of the block $\mathbf w_{2-i_\ell}$, so the previous line together with $\overline{e_{m-j}}=1$ imply $\sigma^{m-j-1}(\overline{\mathbf e})\succ\mathbf d$, a contradiction.

\item[(ii)]  If $k=n-\ell+1$, then
\[(2-i_1,\dots,2-i_{n-\ell})=(i_\ell,\dots,i_{n-1})\]
and $2-i_{n-\ell+1}>1$ imply 
\[(2-i_\ell,\dots,2-i_{n-1})=(i_1,\dots,i_{n-\ell})\]
and $i_{n-\ell+1}=0$.  Since $2-i_n=2$, this implies
\[1\mathbf w_{2-i_\ell}\cdots \mathbf w_{2-i_n}\succ 1\mathbf w_{i_1}\cdots \mathbf w_{i_{n-\ell+1}}.\]
As in case (i), this gives the contradiction that $\sigma^{m-j-1}(\overline{\mathbf e})\succ\mathbf d$.

\item[(iii)]  If $k>n-\ell+1$, then
\[(2-i_1,\dots,2-i_{n-\ell})=(i_\ell,\dots,i_{n-1})\]
and $2-i_{n-\ell+1}=1$ implies
\[(2-i_\ell,\dots,2-i_{n-1})=(i_1\dots,i_{n-\ell})\]
and $i_{n-\ell+1}=1$.  Again since $2-i_n=2$,
\[1\mathbf w_{2-i_\ell}\cdots \mathbf w_{2-i_n}\succ 1\mathbf w_{i_1}\cdots \mathbf w_{i_{n-\ell+1}},\]
and the contradiction of cases (i) and (ii) arises.
\end{enumerate}
This proves for each $j>0$ that 
\[\sigma^j((\mathbf d\overline{e_2\cdots e_{m-2}}0)^\infty)\preceq (\mathbf d\overline{e_2\cdots e_{m-2}}0)^\infty.\]

It remains to show that
\[\sigma^j(\overline{\mathbf e}^\infty)\preceq (d_1\cdots d_{m-1}0)^\infty,\]
or, equivalently,
\[\overline{e_{j+1}\cdots e_me_1\cdots e_j}\preceq d_1\cdots d_{m-1}0\]
for $0\le j<m$.  
Suppose for the sake of contradiction that this inequality does not hold.  If there is some $k\le m-j$ for which 
\[\overline{e_{j+1}\cdots e_{j+k}}\succ d_1\cdots d_k,\]
then $\sigma^{j}(\overline{\mathbf e})\succ \mathbf d$, a contradiction.  Thus there is some minimal $1\le k\le j$ for which 
\[\overline{e_{j+1}\cdots e_me_1\cdots e_k}\succ d_1\cdots d_{m-j+k}.\]
The previous line may be written in block form
\[1\mathbf w_{2-i_\ell}\cdots \mathbf w_{2-i_{n-1}}\mathbf w_2\mathbf w_0\mathbf w_{2-i_1}\cdots \mathbf w_{2-i_{p}}\succ 1\mathbf w_{i_1}\cdots \mathbf w_{i_q}\]
for some $\ell,p,q\in\{1,\dots n\}$.  In particular,
\[(0,2-i_1,\dots,2-i_{p-1})=(i_{q-p},i_{q-p+1},\dots,i_{q-1})\]
and $2-i_p>i_q$ imply 
\[(2-i_{q-p},2-i_{q-p+1},\dots,2-i_{q-1})=(2,i_1,\dots,i_{p-1})\]
and $2-i_q>i_p$.  Since $\mathbf w_{2-i_{q-p}}=01$, there is some $s\ge 0$ such that 
\[\sigma^s(\overline{\mathbf e})=1\mathbf w_{2-i_{q-p+1}}\cdots \mathbf w_{2-i_{q-1}}\mathbf w_{2-i_q}\cdots\mathbf w_{2-i_{n}}0\succ 1\mathbf w_{i_1}\cdots \mathbf w_{i_{p-1}}\mathbf w_{i_p}\cdots \mathbf w_{i_n}1=\mathbf d,\]
contrary to the assumption that $\mathbf d\in\mathcal{M}$.
\end{proof}

%%%%%%%%%%%
%%%%%%%%%%%
%%%%%%%%%%%
% Bibliography
%%%%%%%%%%%
%%%%%%%%%%%
%%%%%%%%%%%

\bibliography{ergodic_properties_of_a_parameterised_family_of_symmetric_golden_maps}
\bibliographystyle{acm}

\end{document}